\newtheorem{thm}{Theorem}[section] 
\newtheorem{lem}[thm]{Lemma}  
\newtheorem{cor}[thm]{Corollary} 
\newtheorem{prop}[thm]{Proposition}  
\newtheorem{hyp}[thm]{Hypothesis}
\newtheorem{claim}[thm]{Claim}
\newtheorem{defn}[thm]{Definition}
\theoremstyle{definition}
\newtheorem{rmk}{Remark}
\title{Rotating Wave Solutions to Lattice Dynamical Systems II: Persistence Results}
\author{Jason J. Bramburger\\ Division of Applied Mathematics\\ Brown University \\ Providence, Rhode Island 02906\\ USA}
\date{} 
\begin{document} 

\maketitle

\begin{center}
{\em This is a post-peer-review, pre-copyedit version of an article published in the Journal of Dynamics and Differential Equations}
\end{center}

\abstract{This work comes as the second part in a series of investigations into the dynamics of rotating waves as solutions to lattice dynamical systems. Such nonlinear waves as solutions to mathematical equations are of great interest throughout the physical sciences due to their association with many electrophysiological pathologies and this investigation aims to further the understanding of rotating waves from a mathematical perspective. Here we focus on so-called Lambda-Omega differential equations, a well-studied generalization of the celebrated Ginzburg-Landau equation, to show that there exists an interval of sufficiently small coupling values for which a rotating wave solution persists. This result is achieved using a wide range of functional analytic tools, primarily in an effort to apply a non-standard Implicit Function Theorem. This work initiates subsequent studies into the dynamics and bifurcations of rotating waves away from the reaction-diffusion equation setting to differential equations for which traditional symmetry based centre manifold reductions cannot be applied.}

\section{Introduction} \label{sec:Introduction} 

Non-equilibrium dynamics characterize much of the world around us, from the movement of microscopic bacteria to the motion of the planets. Such dynamics are also a central focus in modern analysis of dynamical systems and bifurcation theory. In particular, wave propagation through a given spatial medium remains an intense area of study to modern experts in partial differential equations (PDEs) and the physical sciences. Linearly propagating solutions to differential equations often arise as traveling waves or pulses, whose temporal evolution can be described by a linear translation in space. Solutions of this type often arise quite naturally when considering invading species in spatial ecology or electrical impulses propagating through a one-dimensional medium such as nerves. On the other hand, differential equations may also exhibit rotationally propagating solutions, referred to as rotating waves, which come as time-periodic solutions whose temporal evolution is equivalent to a rotation in space.

Examples of rotating waves are easily found throughout the physical sciences and have been the focal point of many mathematical investigations for decades now. One of the most visually striking examples of a rotating wave is that of spiral waves, which even non-specialists are familiar with from their occurrences in the form of hurricanes and the shape of galaxies. Rigorous scientific investigations of spiral waves date back at least to the work of Winfree where they were found to arise in the form of chemical concentration patterns in a light-sensitive chemical reaction $\cite{Winfree,Winfree2}$. Following Winfree's work there has been significant interest in spiral waves which has lead to the understanding that they can be associated with electrophysiological pathologies. This includes, but is not limited to, cortical spreading depression, hallucinations and ventricular fibrillation $\cite{Beaumont,Gorelova,Huang,Cardiac,KeenerSneyd,Cortical}$. From a mathematical perspective spiral waves (and more generally rotating waves) remain an area with seemingly endless open problems whose solutions can greatly enhance the scientific communities understanding of many biophysical phenomena. 

It is now well known that much of the dynamics and bifurcations of rotating waves arising as solutions to PDEs which exhibit continuous symmetries can be understood through the action of these symmetry groups $\cite{Ashwin,Victor,SSW,SSW2,SSW3}$.  The analysis undertaken in this work comes as the second part of a series of investigations into the dynamics of rotating waves in systems which lack the traditionally exploited symmetries in the PDE context. That is, this work aims to expand the current knowledge of spiral waves from a mathematical perspective by approaching the problem not from a PDE context where space is generally continuous, but by moving to a spatially discrete framework. We attempt to initiate further investigations into the dynamics and bifurcations of rotating waves in this discrete spatial setting in order to compare and contrast with rotating waves in the continuous spatial setting. Thus, we attempt to broaden the scientific communities understanding of nonlinear waves in the absence of key symmetries in the differential equation.             

As previously mentioned, this work builds upon a previous work where much of the investigation here is motivated $\cite{MyWork}$. The reader is urged to begin with this initial investigation since much of the motivation for the specific problem studied here is contained in this previous work, as well as an in-depth understanding of the full problem at hand. In this work we extend the results of this preceding work to obtain a proof of the existence of rotating waves in an infinite system of coupled ordinary differential equations arising from a spatial discretization of reaction-diffusion PDEs. We dedicate the following subsection to precisely describing the system of interest throughout this work.

\subsection{Spatially Discretized Lambda-Omega Systems} 

Our work is initiated by considering so-called Lambda-Omega reaction-diffusion equations, typically written in terms of a single complex variable $z(x,y,t):\mathbb{R}^2 \times \mathbb{R} \to \mathbb{C}$, of the form 
\begin{equation} \label{RDELambdaOmega} 
	\frac{\partial z}{\partial t} = D\bigg(\frac{\partial^2 z}{\partial x^2} + \frac{\partial^2 z}{\partial y^2}\bigg) + z[\lambda(|z|) + {\rm i}\omega(|z|)],	
\end{equation}
where ${\rm i}=\sqrt{-1}$ is the imaginary constant. The specific forms of the functions $\lambda$ and $\omega$ remain primarily problem-dependent but typically are taken as generalizations of the functions $\lambda(R) = \pm a^2 \mp R^2$ for some $a > 0$ and $\omega(R)$ a constant function. Since their inception by Howard and Kopell, Lambda-Omega systems have become an archetype for oscillatory behaviour in reaction-diffusion systems $\cite{Kopell}$. These reaction-diffusion equations come as generalizations of the complex Ginzburg-Landau equation which was the central focus of the preceding investigation, and are well-known to arise as the lowest order perturbation of any reaction-diffusion system near a Hopf bifurcation $\cite{Cohen}$. Most importantly is that PDEs of this type are well-known to exhibit rotating wave solutions $\cite{Cohen,Greenberg,Kopell2,Troy}$, and therefore provides a natural starting point for the mathematical investigation presented here. 

Recently spatially discretized partial differential equations, termed lattice dynamical systems (LDSs), have become a central focus for investigations into nonlinear phenomena in the absence of spatial homogeneity. The simplest way to obtain an LDS analogous to $(\ref{RDELambdaOmega})$ would be to introduce a spatial step size $h > 0$ and use the approximation
\begin{equation}
	\frac{\partial^2 u}{\partial x^2}(x,y) \approx \frac{u(x+h,y) + u(x-h,y) - 2u(x,y)}{h^2},
\end{equation} 
along with a similar approximation for $\partial^2u/\partial y^2$. This allows one to move from the continuous spatial medium of $(\ref{RDELambdaOmega})$ to a spatial grid $x = i$ and $y = j$ for $i,j\in h\mathbb{Z}$. In this way we have replaced the second order differential operators in $(\ref{RDELambdaOmega})$ with the following discrete coupling term:
\begin{equation}
	\sum_{i',j'} (z_{i',j'}(t) - z_{i,j}(t)) := z_{i+1,j}(t) + z_{i-1,j}(t) + z_{i,j+1}(t) + z_{i,j-1}(t) - 4z_{i,j}(t),
\end{equation}
where we have written $z(ih,jh,t) = z_{i,j}(t)$ for all $(i,j)\in\mathbb{Z}^2$ to emphasize that our system now is an ordinary differential equation. This brings us to the infinite system of coupled ordinary differential equations which will form the basis of our investigation in this work given by
\begin{equation} \label{LambdaOmegaLDS} 
	\dot{z}_{i,j} = \alpha\sum_{i',j'} (z_{i'j'} -z_{i,j}) + z_{i,j}[\lambda(|z_{i,j}|) + {\rm i}\omega(|z_{i,j}|,\alpha)], \ \ \ (i,j) \in \mathbb{Z}^2,	
\end{equation} 
where we have suppressed the dependence on $t$ for simplicity. The coupling interactions of $(\ref{LambdaOmegaLDS})$ can be visualized using Figure $\ref{fig:Lattice}$ where the boxes represent the distinct elements and the connecting lines represent the coupling interactions. 

\begin{figure} 
	\centering
	\includegraphics[height=7cm, width=7cm]{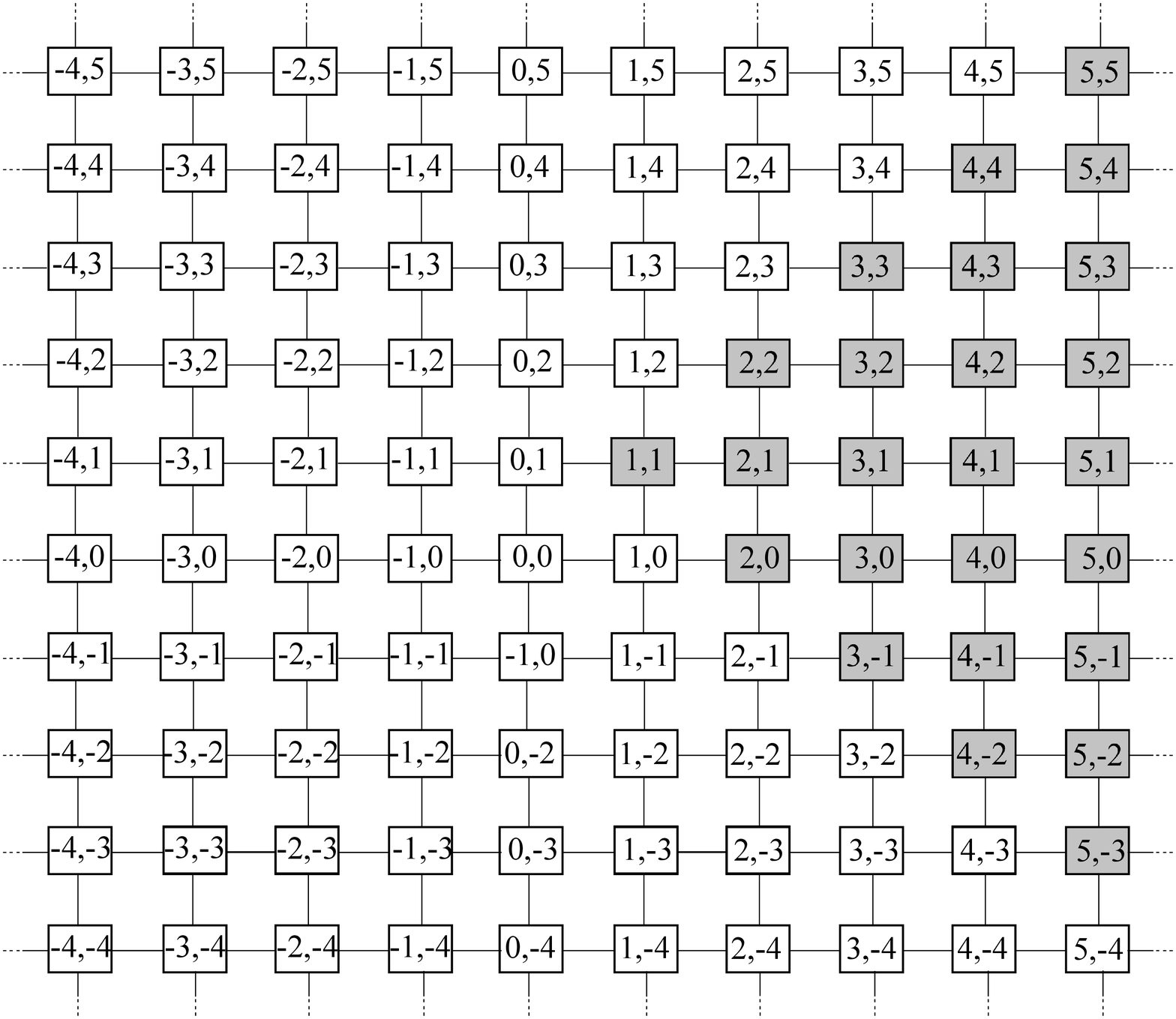}
	\caption{A $2$-dimensional lattice. Connections between cells represent the nearest-neighbour coupling inferred by equation ($\ref{LambdaOmegaLDS}$). The shaded cells represent the indices of the reduced system, as will be defined in Definition $\ref{defn:ReducedSystem}$.}
	\label{fig:Lattice}
\end{figure} 

Here the parameter $\alpha$ is often referred to as the {\em coupling coefficient} and it describes the strength of interaction between neighbouring elements in the lattice. Using the spatial discretization technique described above one can see that $\alpha \approx D/h^2 \geq 0$. The limit $\alpha \to \infty$ corresponds to a return to the continuum equation $(\ref{RDELambdaOmega})$, whereas $\alpha \to 0^+$ is often termed the {\em anti-continuum limit}. It is this anti-continuum limit where we have focused our attention in the previous investigation and we intend to continue this investigation here. In particular, the goal of this work is to prove the existence of rotating wave solutions to system $(\ref{LambdaOmegaLDS})$ for $\alpha > 0$ and small. This work attempts to provide the crucial first step in the investigation of rotating waves in the discrete spatial setting, much like the work of Cohen et. al. in the continuous spatial setting $\cite{Cohen}$, or what the work of Zinner has done for the theory of linearly propagating solutions in the LDS setting $\cite{Zinner}$.  

Aside from being spatial discretizations of PDEs, LDSs have proven themselves extremely useful to model a multitude of physical situations irrespective of their continuous spatial counterparts. Lattice models have been shown to be well-suited in such areas as chemical reaction theory $\cite{Erneux,Erneux2}$, quantum mechanics $\cite{Temam}$, models of neural networks $\cite{Ermentrout,ErmentroutKopell2}$, optics $\cite{Firth}$ and material science \cite{Cahn2,Cook}. Although continuum models could potentially be used in such situations, it appears that the discrete nature of the lattice models is better suited to reflect the discrete nature of the physical settings in each of the aforementioned areas.

\subsection{The Model Assumptions} 

As mentioned in the previous subsection, the specific assumptions on the $\lambda$ and $\omega$ functions vary from study to study. We attempt to study system $(\ref{LambdaOmegaLDS})$ in the broadest generality with regard to our assumptions on the $\lambda$ and $\omega$ functions, therefore throughout this work we will make the following assumption: 

\begin{hyp} \label{Hyp:LambdaOmega} 
	The functions $\lambda$ and $\omega$ in $(\ref{LambdaOmegaLDS})$ satisfy the following:
	\begin{itemize}
	\item[{\rm(1)}] $\lambda : [0,\infty) \to \mathbb{R}$ is continuously differentiable and there exists some $a> 0$, with the property that $\lambda(a) = 0$ and $\lambda'(a) \neq 0$.
	\item[{\rm(2)}] $\omega = \omega(R,\alpha): [0,\infty) \times \mathbb{R} \to \mathbb{R}$ is continuously differentiable in both its arguments such that 
	\begin{equation}
		\omega(R,\alpha) - \omega(a,\alpha) = \alpha \omega_1(R,\alpha),
	\end{equation} 
	for some function $\omega_1(R,\alpha)$ which is continuously differentiable on the same domain with $\omega_1(a,\alpha) = 0$ for all $\alpha \in \mathbb{R}$. 
\end{itemize}
\end{hyp}

Conditions $(1)$ and $(2)$ of Hypothesis $\ref{Hyp:LambdaOmega}$ arise as natural generalizations of the normal form of a Hopf bifurcation and are similar to those which were assumed by Cohen et. al. and Greenberg in their respective proofs of spiral wave solutions in spatially-continuous reaction-diffusion equations $\cite{Cohen,Greenberg}$. More specific investigations often consider the Ginzburg-Landau equations with $\lambda(R) = \pm a^2 \mp R^2$, which clearly satisfies condition $(1)$ and $\omega(R,\alpha)$ to be a constant function independent of its arguments. When extending to non-constant functions $\omega(R,\alpha)$, in similar works exploring rotating waves in Lambda-Omega systems, these functions are taken to be slight perturbations of constant functions. Typically one considers 
\begin{equation} \label{OmegaFn}
	\omega(R) = \beta + \varepsilon R^2,
\end{equation}
where $\beta$ is a real-valued constant and $\varepsilon$ is a small parameter. Then writing $\varepsilon = \alpha \varepsilon'$ we recast $(\ref{OmegaFn})$ to satisfy the condition $(2)$ by observing that
\begin{equation}
	\begin{split}
		\omega(R) = \omega(R,\alpha) &= \beta + \alpha\varepsilon'R^2 \\
		&= \beta + \alpha\varepsilon'a^2 + \alpha[2\varepsilon' a(R - a) + \varepsilon'(R-a)^2],
	\end{split}
\end{equation} 
where we have $\omega(a,\alpha) = \beta + \alpha\varepsilon'a^2$ and $\omega_1(R,\alpha) = 2\varepsilon' a(R - a) + \varepsilon'(R-a)^2$, fitting the requirements of condition $(2)$. An important point to note is that in the anti-continuum limit $\alpha \to 0^+$ the function $\omega$ reduces to a constant function, which numerical investigations have found to be a necessary condition with regards to the system on a finite lattice $\cite{ErmentroutLambdaOmega}$. 

The most important characteristic of Hypothesis $\ref{Hyp:LambdaOmega}$ on the $\lambda$ and $\omega$ functions is that in the absence of coupling $(\alpha = 0)$ each component exhibits an identical periodic oscillation with amplitude $a$ and frequency $2\pi/\omega(a,0)$. Indeed, notice that when $\alpha = 0$ the elements of system $(\ref{LambdaOmegaLDS})$ completely decouple and are therefore acting independently of each other, resulting in the system
\begin{equation}
	\dot{z}_{i,j} = z_{i,j}[\lambda(|z_{i,j}|) + {\rm i}\omega(|z_{i,j}|,0)],
\end{equation}  
for each $(i,j) \in \mathbb{Z}^2$. Decomposing each $z_{i,j}$ into polar variables using the ansatz 
\begin{equation}
	z_{i,j}(t) = r_{i,j}(t)e^{{\rm i}\theta_{i,j}(t)}
\end{equation}
results in the set of ordinary differential equations
\begin{equation} \label{PolarUncoupled}
	\begin{split}
	\begin{aligned}
		&\dot{r}_{i,j} = r_{i,j}\lambda(r_{i,j}), \\
		&\dot{\theta}_{i,j} = \omega(r_{i,j},0),
	\end{aligned}
	\end{split}
\end{equation}
for each $(i,j)\in\mathbb{Z}^2$. Taking $r_{i,j} = a$ leads to a periodic solution of the form
\begin{equation} \label{UncoupledPeriodicSoln}
	z_{i,j}(t) = ae^{{\rm i}(\omega(a,0)t + \theta_{i,j}^0)},
\end{equation}
where $\theta_{i,j}^0 \in S^1$ is an initial phase value for each $(i,j) \in \mathbb{Z}^2$. Furthermore, the non-degeneracy condition $\lambda'(a) \neq 0$ guarantees that this solution is either locally attracting or repelling. Figure $\ref{fig:PeriodicSoln}$ gives characteristic phase portraits of the system $(\ref{PolarUncoupled})$ in the Cartesian plane upon writing $z_{i,j} = x + {\rm i}y$.

\begin{figure} 
	\centering
		\includegraphics[height = 4cm]{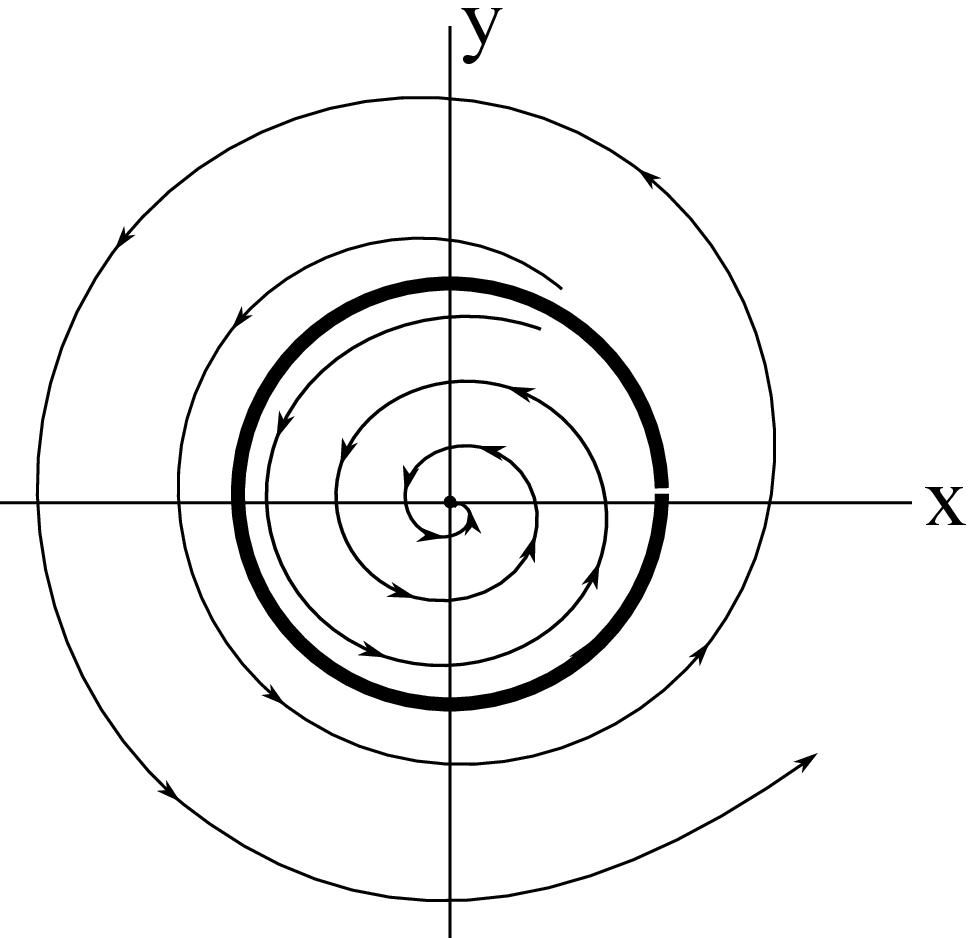}\ \ \ \ \ \ \ \ \ \ \ \ \ \ \ \ \ \
		\includegraphics[height = 4cm]{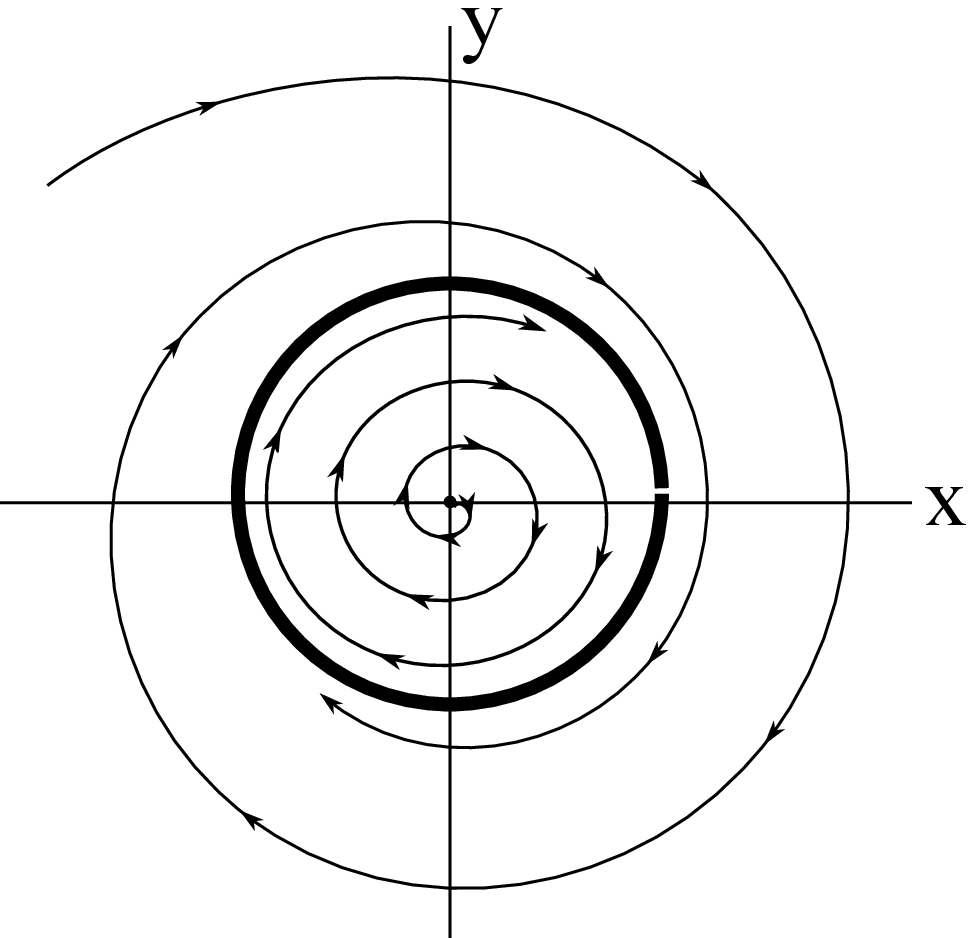}
		\caption{Typical phase portraits of the periodic solution $(\ref{UncoupledPeriodicSoln})$ and some nearby trajectories. There are two cases: (Left) locally repelling when $\lambda'(a) > 0$ and (Right) locally attracting when $\lambda'(a) < 0$.}
		\label{fig:PeriodicSoln}	
\end{figure} 

It is through Hypothesis $\ref{Hyp:LambdaOmega}$ and the previous discussion that the reader now can visualize our situation. In the absence of coupling there is the potential for all elements of the lattice dynamical system to be oscillating independently. Then, as $\alpha$ is slightly perturbed into positive values we seek to examine states of synchronized oscillation over the entire lattice. In particular our goal is to determine the existence of a state of synchronized oscillation which corresponds to a rotating wave in this discrete spatial context.

\subsection{Outline of the Paper} 

We begin with identifying all the functional analytic tools and nomenclature which will be required throughout this work in Section $\ref{sec:Preliminaries}$. It will be in Section $\ref{sec:Preliminaries}$ that we also present a non-standard Implicit Function Theorem, which comes as the main tool for obtaining the central result of this work. Following this delve into the abstract results used in this work we provide the appropriate definition of a rotating wave to our LDS $(\ref{LambdaOmegaLDS})$ in $(\ref{sec:Existence})$ along with a segmentation of this section into three subsections. The first subsection, Subsection $\ref{subsec:Abstract}$, moves the problem of identifying rotating waves in our lattice system to an abstract functional analytic setting where the tools of Section $\ref{sec:Preliminaries}$ can be applied. In Subsection $\ref{subsec:Abstract}$ we also comment on how much of the work carried out in the former investigation in Part I is of great use to our work here. Finally in Subsection $\ref{subsec:MainThm}$ we provide the main result of this work, whose proof is left to Section $\ref{sec:Proof}$. 

The proof of the main result of this paper is a careful and tedious application of a non-standard Implicit Function Theorem, which forms a large portion of this manuscript. The main proof of the existence of rotating wave solutions to $(\ref{LambdaOmegaLDS})$ is achieved by restricting our attention to the Lambda-Omega system in a `wedge' of indices belonging to the full lattice with appropriate boundary interactions, termed the reduced system. Upon restricting ourselves to this reduced system we define a mapping whose roots lie in one-to-one correspondence with solutions to the Lambda-Omega system restricted to our reduced system. To obtain roots to our mapping we provide several minor results and obtain a parameter continuation of a solution at $\alpha = 0$ by applying the non-standard Implicit Function Theorem introduced in Section $\ref{sec:Preliminaries}$. Upon provided a solution to the reduced Lambda-Omega system, one simply exploits the square symmetry of the lattice to obtain a solution over the full lattice with the desired characteristics to determine it to be a rotating wave in this context.

Upon providing the main existence result and its proof (Sections $\ref{sec:Existence}$ and $\ref{sec:Proof}$) we turn to a discussion of possible extensions and avenues for future work. This discussion is left to Section $\ref{sec:Discussion}$ where we contemplate the existence of multi-armed rotating waves, the stability of the solution provided here, and state some long-term goals aimed to examine bifurcations of rotating waves in the discrete spatial setting. It is the intention of this discussion to motivate subsequent studies into the differences in dynamics and bifurcations of rotating waves in the continuous versus the discrete spatial settings.

\section{Preliminaries} \label{sec:Preliminaries} 

The aim of this section is to provide the necessary nomenclature and results from infinite-dimensional Banach space theory in order to properly frame and convey the results of this work. The results in this section are not original to this work and therefore will be stated without proof. For a more complete introduction to these topics the reader is urged to consult $\cite{Rudin}$, and for a more in-depth inspection of linear operators acting between Banach spaces see $\cite{AdjointLemmas}$. 

To begin, let us consider a linear operator $T: X \to Y$ acting between Banach spaces. Then we will denote the operator norm of this linear operator to be 
\begin{equation} \label{OperatorNorm}
	\|T\|_{op} = \sup_{0 \neq x \in X} \frac{\|Tx\|_Y}{\|x\|_X},
\end{equation}
where $\|\cdot\|_X$ is the norm on $X$ and $\|\cdot\|_Y$ is the norm on $Y$. Although we will consider linear operators acting between many different Banach spaces, we will always use the notation $(\ref{OperatorNorm})$ to denote the norm of the operator.

The {\em dual} space of the Banach space $X$ is the set of all bounded linear operators $f: X \to \mathbb{R}$ equipped with the usual operator norm. The dual space is complete with respect to the operator norm, and is therefore a Banach space, here denoted $X^*$. Then, for a linear operator $T: X\to Y$, we define the {\em adjoint} of $T$, denoted $T^*$, to be the operator $T^*: Y^* \to X^*$ acting by
\begin{equation}
	T^*(f)(x) = f(Tx)	
\end{equation} 
for all $x \in X$ and $f \in Y^*$. In this way one can show that $\|T\|_{op} = \|T^*\|_{op}$ and if $T$ is invertible then $(T^{-1})^* = (T^*)^{-1}$. The following lemma connects some of our understandings between the actions of $T$ and $T^*$ and will become quite important in a later section of this work.

\begin{lem} [{\em $\cite{AdjointLemmas}$, \S II.3, Theorem II.3.7}] \label{lem:Adjoint}
	A linear operator $T$ has dense range if and only if $T^*$ is one-to-one. 
\end{lem}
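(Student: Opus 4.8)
The plan is to prove the two implications separately, with the Hahn--Banach theorem doing the only substantive work. First I would record the elementary identification that underlies everything: for $f \in Y^*$, the equation $T^*f = 0$ unravels to $f(Tx) = 0$ for every $x \in X$, i.e. $f$ annihilates the range of $T$. Hence $\ker(T^*) = \operatorname{ran}(T)^{\perp}$, the annihilator of $\operatorname{ran}(T)$ in $Y^*$, and by continuity of $f$ this equals $\overline{\operatorname{ran}(T)}^{\perp}$. With this in hand the lemma becomes the statement that a closed subspace $M \subseteq Y$ equals $Y$ if and only if $M^{\perp} = \{0\}$.

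For the forward direction, suppose $\operatorname{ran}(T)$ is dense in $Y$ and let $f \in \ker(T^*)$. Then $f$ vanishes on the dense set $\operatorname{ran}(T)$, and since $f$ is a bounded (hence continuous) linear functional, it vanishes on all of $Y$; thus $f = 0$, so $T^*$ is one-to-one. This direction needs nothing beyond continuity.

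For the converse I would argue by contraposition. If $\operatorname{ran}(T)$ is not dense, set $M := \overline{\operatorname{ran}(T)}$, a proper closed subspace of $Y$, and pick $y_0 \in Y \setminus M$. Applying the Hahn--Banach theorem (in its extension form, e.g. to a functional defined on $M \oplus \mathrm{span}\{y_0\}$, or equivalently via a functional on the nonzero quotient $Y/M$ composed with the quotient map) produces a nonzero $f \in Y^*$ with $f|_{M} = 0$. Then $T^*f(x) = f(Tx) = 0$ for all $x$, so $f \in \ker(T^*) \setminus \{0\}$, contradicting injectivity of $T^*$. The main obstacle is precisely this appeal to Hahn--Banach to manufacture a nonzero functional killing a proper closed subspace; the rest is bookkeeping with the definition of the adjoint and continuity, so I expect the write-up to be short once that ingredient is cited.
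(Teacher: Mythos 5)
Your proof is correct and is exactly the standard argument: identifying $\ker(T^*)$ with the annihilator of $\operatorname{ran}(T)$ and invoking Hahn--Banach for the nontrivial direction. The paper itself states this lemma without proof, citing Goldberg's \emph{Unbounded Linear Operators} (Theorem II.3.7), where the proof proceeds along the same lines, so there is nothing to add beyond noting that your argument is the intended one for the bounded-operator setting in which the paper applies it.
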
 

The Banach spaces which will be of primary interest throughout this work will be the sequence spaces indexed by a countably infinite index set $\mathcal{I}$. In particular, our attention will be focussed on two particular sequences spaces, denoted $\ell^1(\mathcal{I})$ and $\ell^\infty(\mathcal{I})$. They are defined as follows:
\begin{equation}
	\ell^1(\mathcal{I}) = \bigg\{x = \{x_n\}_{n \in \mathcal{I}}\ |\ \sum_{n \in \mathcal{I}} |x_n| < \infty\bigg\}	,
\end{equation}  
and 
\begin{equation}
	\ell^\infty(\mathcal{I}) = \bigg\{x = \{x_n\}_{n \in \mathcal{I}}\ |\ \sup_{n \in \mathcal{I}} |x_n| < \infty\bigg\}.
\end{equation}  
It is well-known that $\ell^1(\mathcal{I})$ is complete (and therefore a Banach space) under the norm
\begin{equation}
	\|x\|_1 = \sum_{n \in \mathcal{I}} |x_n|,
\end{equation}
and similarly the norm associated to $\ell^\infty(\mathcal{I})$ is given by
\begin{equation} \label{InfNorm}
	\|x\|_\infty = \sup_{n \in \mathcal{I}} |x_n|.
\end{equation}

Along with the sequence spaces $\ell^1(\mathcal{I})$ and $\ell^\infty(\mathcal{I})$, our attention will also focus on the closed subspace of $\ell^\infty(\mathcal{I})$ defined as 
\begin{equation} \label{c0Definition}
	c_0(\mathcal{I}) := \{x \in \ell^\infty| \forall \varepsilon > 0,\ \#\{n\in\mathcal{I}\ |\ |x_n| \geq \varepsilon\} < \infty\},
\end{equation}
where $\#\{\cdot\}$ has been used to denote the cardinality of the set. It is a straightforward exercise to see that $c_0(\mathcal{I})$ is a closed subspace of $\ell^\infty(\mathcal{I})$ with respect to the norm $(\ref{InfNorm})$, and is therefore a Banach space itself.  In the case when $\mathcal{I} = \mathbb{N}$ we have that $c_0(\mathbb{N})$ is merely the set of all sequences which converge to $0$. The definition provided in $(\ref{c0Definition})$ is a natural extension of this space to more diverse countable index sets.  An important characteristic of the space $c_0(\mathcal{I})$ is that its dual space, denoted $(c_0(\mathcal{I}))^*$, is isometrically isomorphic to $\ell^1(\mathcal{I})$, and can therefore be identified with this space. Hence, if $T: c_0(\mathcal{I}) \to c_0(\mathcal{I})$ is a bounded linear operator, then the corresponding dual operator $T^*$ is a bounded linear operator on $\ell^1(\mathcal{I})$.

The spaces $c_0(\mathcal{I})$ and $\ell^1(\mathcal{I})$ are both separable and exhibit a Shauder basis. In both spaces a Shauder basis is given by the canonical basis $\{\delta_n\}_{n \in \mathcal{I}}$, where $\delta_n$ is the sequence indexed by the elements of $\mathcal{I}$ with a $1$ at index $n$ and $0$'s everywhere else. Hence, for $X = c_0(\mathcal{I})$ or $\ell^1(\mathcal{I})$ and a linear operator $T:X \to X$ we can represent $T$ as an infinite matrix $T = [t_{nm}]_{n,m\in\mathcal{I}}$ by 
\begin{equation}
	t_{nm} = \langle T\delta_n,\delta_m \rangle,
\end{equation} 
where $\langle \cdot, \cdot \rangle$ represents the usual dot product given by the sum of component-wise multiplications. Hence, one can see that $t_{nm}$ is merely the element at the $m$th index of $T\delta_n$. Then using this notation one can show that $T^*:X^* \to X^*$ is given by the transpose of the infinite matrix $[t_{nm}]_{n,m\in\mathcal{I}}$.

We move now from linear operators to generally nonlinear operators acting between Banach spaces. In this work we will be interested in a slightly stronger version of Fr\'echet differentiability. That is, $F:X \to Y$ is said to be {\em strongly Fr\'echet differentiable} at the point $x_0$ if 
\begin{equation}
	\lim_{x_1,x_2 \to x_0} \frac{\|F(x_1) - F(x_2) - F'(x_0)(x_1 - x_2)\|_Y}{\|x_1 - x_2\|_X} = 0,	
\end{equation}   
where $F'(x_0)$ again denotes the Fr\'echet derivative of $F$ at $x = x_0$. The following theorem relates Fr\'echet differentiability to strong Fr\'echet differentiability. 

\begin{thm}[{\em $\cite{Schechter}$, \S 25, Theorem 25.23}] \label{thm:StrongDiff}
	If $F:X\to Y$ is differentiable on an open set, then $F$ is continuously differentiable if and only if $F$ is strongly differentiable on the same set. 
\end{thm}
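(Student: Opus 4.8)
The plan is to establish the two implications separately, using in each case the convexity of balls together with a mean-value estimate for maps valued in a Banach space. Throughout, write $B(x_0,\delta)$ for the open ball of radius $\delta$ centred at $x_0$, and recall that such balls are convex, so the segment joining any two of their points stays inside the ball (hence inside the given open set once $\delta$ is small enough).

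For the direction ``$F$ continuously differentiable $\Rightarrow$ $F$ strongly differentiable'', fix $x_0$ in the open set and let $\varepsilon > 0$. First I would use continuity of $x \mapsto F'(x)$ in the operator norm to pick $\delta > 0$ so small that $B(x_0,\delta)$ lies in the open set and $\|F'(x) - F'(x_0)\|_{op} < \varepsilon$ for all $x \in B(x_0,\delta)$. Next, introduce the auxiliary map $G(x) := F(x) - F'(x_0)x$, which is continuously differentiable on $B(x_0,\delta)$ with $G'(x) = F'(x) - F'(x_0)$, so that $\|G'(x)\|_{op} < \varepsilon$ there. Applying the vector-valued mean-value inequality on the segment from $x_2$ to $x_1$ gives $\|G(x_1) - G(x_2)\|_Y \leq \varepsilon \|x_1 - x_2\|_X$ for all $x_1, x_2 \in B(x_0,\delta)$; since $G(x_1) - G(x_2) = F(x_1) - F(x_2) - F'(x_0)(x_1 - x_2)$, this is precisely the strong differentiability estimate at $x_0$.

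For the converse, fix $x_0$ and $\varepsilon > 0$ and use strong differentiability at $x_0$ to choose $\delta > 0$ with $B(x_0,\delta)$ in the open set and
\begin{equation*}
	\|F(x_1) - F(x_2) - F'(x_0)(x_1 - x_2)\|_Y \leq \varepsilon \|x_1 - x_2\|_X
\end{equation*}
for all $x_1, x_2 \in B(x_0,\delta)$. Then for an arbitrary fixed $x \in B(x_0,\delta)$ and arbitrary direction $h \in X$, I would apply this with $x_1 = x + th$ and $x_2 = x$ for $t > 0$ small enough that $x + th \in B(x_0,\delta)$, divide by $t$, and let $t \to 0^+$; since $F$ is Fr\'echet (hence directionally) differentiable at $x$, the difference quotient converges to $F'(x)h$, yielding $\|(F'(x) - F'(x_0))h\|_Y \leq \varepsilon \|h\|_X$. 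Homogeneity extends this to all $h \in X$, so $\|F'(x) - F'(x_0)\|_{op} \leq \varepsilon$ for every $x \in B(x_0,\delta)$, which is continuity of $F'$ at $x_0$; as $x_0$ was arbitrary, $F$ is continuously differentiable on the open set.

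The hard part will be justifying the mean-value inequality $\|G(x_1) - G(x_2)\|_Y \leq \big(\sup_{t \in [0,1]} \|G'(x_2 + t(x_1 - x_2))\|_{op}\big)\|x_1 - x_2\|_X$ used in the forward implication, since the classical mean-value \emph{equality} fails for maps into a general Banach space. In that direction $G$ is already known to be $C^1$, so the cleanest route is the Bochner-integral identity $G(x_1) - G(x_2) = \int_0^1 G'(x_2 + t(x_1 - x_2))(x_1 - x_2)\,dt$ valid for a continuously differentiable map, which is standard; alternatively one reduces to the scalar case by composing with a norming functional $f \in Y^*$ (obtained from Hahn--Banach) and applying the ordinary mean-value theorem to $t \mapsto f\big(G(x_2 + t(x_1 - x_2))\big)$. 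The converse implication needs none of this machinery --- only the definition of the Fr\'echet derivative at a single point and a passage to the limit.
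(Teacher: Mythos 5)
Your proposal is correct, but note that the paper does not prove this statement at all: Theorem \ref{thm:StrongDiff} is quoted verbatim from Schechter (\S 25, Theorem 25.23) in the preliminaries, where the author explicitly states that such results are given without proof. So there is no in-paper argument to compare against; what you have supplied is the standard self-contained proof, and it holds up. The forward direction correctly reduces to the vector-valued mean-value inequality applied to $G(x) = F(x) - F'(x_0)x$ on a ball where $\|F'(x)-F'(x_0)\|_{op} < \varepsilon$, and either of your justifications (the Bochner-integral identity for a $C^1$ map, or composing with a Hahn--Banach norming functional and using the scalar mean value theorem) is legitimate; the Hahn--Banach route in fact needs only differentiability along the segment, so it is the lighter tool. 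The converse is also sound: the key observation is that the strong-differentiability estimate at $x_0$ is uniform over pairs $x_1, x_2$ in a ball, so taking $x_1 = x + th$, $x_2 = x$, dividing by $t$ and letting $t \to 0^+$ (using that $F$ is Fr\'echet differentiable at the nearby point $x$, which the hypothesis supplies) gives $\|(F'(x)-F'(x_0))h\|_Y \leq \varepsilon\|h\|_X$ for every $h$, hence $\|F'(x)-F'(x_0)\|_{op} \leq \varepsilon$ on the ball. The only point worth making explicit is that the paper's definition of strong differentiability already presupposes the existence of $F'(x_0)$, which is guaranteed here by the standing assumption that $F$ is differentiable on the open set, so both implications are stated at the right level of generality.
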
 

The central problem in this work stems from the fact that the Fr\'echet derivative of our nonlinear correspondence between Banach spaces fails to be invertible. To work around this failure of invertibility we introduce the notion of an approximate right inverse. Begin by again considering two arbitrary Banach spaces, $X$ and $Y$ with respective norms $\|\cdot\|_X$ and $\|\cdot\|_Y$, and let $D$ be a dense subset of $X$. A (not necessarily bounded) linear mapping $M:D \to Y$ is called {\em approximately right invertible} if, for each $\mu \in (0,1)$, there exists a norm $\|\cdot\|_\mu$ on $X$, a bounded mapping $B_\mu:Y \to X$, and a bound $\Gamma(\mu)$, depending on $\mu$, such that for all $y \in Y$ we have
\begin{equation}
	\|MB_\mu y - y\|_Y \leq \mu \|y\|_Y
\end{equation} 
and
\begin{equation}
	\|B_\mu y\|_\mu \leq \Gamma(\mu)\|y\|_Y,
\end{equation}
with the property that for all $x \in X$ we have
\begin{equation}
	\{\|x\|_\mu\} \nearrow \|x\|_X\ {\rm as}\ \mu \searrow 0.  
\end{equation}
Here we use the notation $\nearrow$ to denote monotonically increasing convergence and $\searrow$ as monotonically decreasing convergence. Then each $B_\mu$ is called an {\em approximate right inverse} of $M$. We denote $X_{\mu}$ to be the completion of $X$ with respect to the norm $\|\cdot\|_\mu$. Note that $B_\mu$ need not be linear, and particularly in the present situation it will not be.

Our work centres around applying the following theorem due to Craven and Nashed to our lattice dynamical system $(\ref{LambdaOmegaLDS})$ in order to determine the existence of rotating wave solutions for $\alpha > 0$.

\begin{thm}[{\em $\cite{CravenNashed}$, \S 3, Theorem 2}] \label{thm:CravenNashed}
	Let $X$ and $Y$ be real Banach spaces, with $a\in X$. Let $S$ be a closed convex cone in $Y$. Let the function $G:X \to Y$ be strongly Fr\'echet differentiable at $a$. Let $b:= G(a)$ and assume $b\in S$. Let the Fr\'echet derivative $M = G'(a):X \to Y$ be bounded linear with approximate right inverses $B_\mu$ and bound function $\Gamma(\mu) = k_0\mu^{-\gamma}$, with $\gamma < 1$ and $k_0 > 0$. Then for sufficiently small $\mu$, whenever $c$ satisfies $-[G(a) + G'(a)c]\in S$, and $\|c\|_{\mu} = 1$, there exists a solution $x = a +tc + \eta(t) \in X_{\mu}$ to $-G(x) \in S$, valid for all sufficiently small $t > 0$, with $x \neq a$. With an appropriate choice of $\mu = \mu(t) \to 0$ as $t \to 0^+$, $\|\eta(t)\|_{\mu(t)} = o(t)$ as $t \to 0^+$. 
\end{thm}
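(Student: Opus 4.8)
The plan is to prove the statement by a quasi‑Newton iteration built on the approximate right inverses $B_\mu$: for each sufficiently small $t>0$ I will produce a point $x(t)=a+tc+\eta(t)$ as the limit of a sequence $\{x_n\}$ along which the defect $-G(x_n)$ is driven into the cone $S$. Write $R(x):=G(x)-G(a)-M(x-a)$ and, for $s>0$, let $\varepsilon(s)$ be the supremum of $\|R(x_1)-R(x_2)\|_Y/\|x_1-x_2\|_X$ over distinct $x_1,x_2$ in $B_s(a):=\{x\in X:\|x-a\|_X\le s\}$; strong Fr\'echet differentiability of $G$ at $a$ says exactly that $R(a)=0$ and $\varepsilon(s)\to0$ as $s\to0^+$, whence $G$ is continuous near $a$ and $\|R(x)\|_Y\le\varepsilon(\|x-a\|_X)\|x-a\|_X$. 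The first move is an algebraic reduction: with $q:=-[G(a)+Mc]\in S$ one checks the identity
\begin{equation*}
	-G(a+tc+\eta)=\big[\,tq+(1-t)(-G(a))\,\big]-M\eta-R(a+tc+\eta),\qquad t\in[0,1],
\end{equation*}
valid for every $\eta$. Since $G(a)\in S$ and also $-G(a)\in S$ — a condition necessary for the conclusion (as $-G(x(t))\to-G(a)$ in the closed set $S$) and trivially true in the intended application, where $G(a)=0$ — the bracketed term lies in $S$ for all $t\in[0,1]$. As $S$ is a closed convex cone it therefore suffices to choose $\eta$ with $-M\eta-R(a+tc+\eta)\in S$, and for that it is enough to solve the residual equation $M\eta=-R(a+tc+\eta)$, which $M$ permits only approximately, via $B_\mu$.

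Fix $\mu\in(0,1)$ (its later $t$‑dependence postponed), normalise $c$ so that $\|c\|_\mu=1$, and let $C_\mu<\infty$ be a bound for $B_\mu$ in the $X$‑norm. Set $x_0:=a+tc$ and, inductively, $x_{n+1}:=x_n+B_\mu(\rho_n)$ where $\rho_n:=-G(x_n)-[tq+(1-t)(-G(a))]$ is the defect at step $n$. A short computation gives $\rho_0=-R(x_0)$ and, using $MB_\mu(\rho_n)=\rho_n+e_n$ with $\|e_n\|_Y\le\mu\|\rho_n\|_Y$ together with the Lipschitz bound on $R$,
\begin{equation*}
	\rho_{n+1}=-e_n-\big(R(x_{n+1})-R(x_n)\big),\qquad\|\rho_{n+1}\|_Y\le\big(\mu+\varepsilon(s_0)\,C_\mu\big)\|\rho_n\|_Y ,
\end{equation*}
as long as $x_0,\dots,x_{n+1}$ lie in $B_{s_0}(a)$. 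Taking $s_0:=2t\|c\|_X$, a bootstrap on $\|x_n-a\|_X$ shows the iterates do stay in $B_{s_0}(a)$ for $t$ small, and since $\varepsilon(s_0)\to0$ as $t\to0$ the contraction factor $\theta:=\mu+\varepsilon(s_0)C_\mu$ is then $<1$. Hence $\rho_n\to0$ geometrically, the steps $\|x_{n+1}-x_n\|_X\le C_\mu\theta^n\|\rho_0\|_Y$ are summable, $x_n\to x(t)$ in $X$, and by continuity of $G$ and closedness of $S$ we get $-G(x(t))=tq+(1-t)(-G(a))\in S$; moreover $x(t)\neq a$ since $\|x(t)-a\|_\mu\ge t\|c\|_\mu-\|\eta(t)\|_\mu=t-o(t)>0$. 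Estimating instead $\eta(t)=x(t)-a-tc=\sum_{n\ge0}B_\mu(\rho_n)$ in the $\mu$‑norm through $\|B_\mu y\|_\mu\le\Gamma(\mu)\|y\|_Y$ yields
\begin{equation*}
	\|\eta(t)\|_\mu\le\Gamma(\mu)\sum_{n\ge0}\|\rho_n\|_Y\le\frac{\Gamma(\mu)}{1-\theta}\|\rho_0\|_Y\le\frac{\Gamma(\mu)}{1-\theta}\,\varepsilon(s_0)\,s_0=O\big(\Gamma(\mu)\,\varepsilon(2t\|c\|_X)\,t\big),
\end{equation*}
which is $o(t)$ for each fixed $\mu$, because $\varepsilon(2t\|c\|_X)\to0$.

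The one genuinely delicate point — the one I expect to dominate the write‑up — is upgrading this to a choice $\mu=\mu(t)\to0$ as $t\to0^+$, so that the curve lands in the stronger completions $X_{\mu(t)}$. Three effects compete: the per‑step error $\mu\|\rho_n\|_Y$ and the wish for the strongest possible norm push $\mu$ down, whereas $\Gamma(\mu)=k_0\mu^{-\gamma}$ (and the coarse $X$‑bound $C_\mu$ that decides whether the iterates stay in $B_{s_0}(a)$) blow up as $\mu\searrow0$. Because the defect always carries the vanishing factor $\varepsilon(2t\|c\|_X)$, one can let $\mu(t)\to0$ slowly enough that both $\Gamma(\mu(t))\varepsilon(2t\|c\|_X)\to0$ and $C_{\mu(t)}\varepsilon(2t\|c\|_X)\to0$, keeping $\theta(t)<1$ and the iterates inside $B_{s_0(t)}(a)$ uniformly in small $t$; it is in making these rates balance that the hypothesis $\gamma<1$ is used, and with such a choice the displayed bound gives $\|\eta(t)\|_{\mu(t)}=o(t)$ and $x(t)\in X_{\mu(t)}$. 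By contrast, once the reduction to the residual equation $M\eta=-R(a+tc+\eta)$ is in hand the iteration and its geometric decay are routine; the real labour is the two‑norm bookkeeping — the moving norm $\|\cdot\|_{\mu(t)}$, in which $c$ is normalised and the final estimate is phrased, versus the fixed norm $\|\cdot\|_X$, in which differentiability and the neighbourhood $B_{s_0}(a)$ live.
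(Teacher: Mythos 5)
You are not being compared against an internal proof here: the paper states Theorem \ref{thm:CravenNashed} as a citation to Craven and Nashed and uses it as a black box (its only original contribution on this point is Appendix \ref{sec:Appendix1}, bridging strong Fr\'echet and restricted strong Hadamard differentiability). Judged on its own merits and against how the theorem is used in Section \ref{sec:Proof}, your argument has a genuine gap: you ``let $C_\mu<\infty$ be a bound for $B_\mu$ in the $X$-norm.'' No such bound is among the hypotheses --- the definition of an approximate right inverse only provides $\|B_\mu y\|_\mu\leq\Gamma(\mu)\|y\|_Y$ in the \emph{weaker} norm $\|\cdot\|_\mu\leq\|\cdot\|_X$. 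Worse, in precisely the situation the theorem is designed for, and in the application of this paper, no such $C_\mu$ can exist: if one had $\|B_\mu y\|_X\leq C_\mu\|y\|_Y$ together with $\|MB_\mu y-y\|_Y\leq\mu\|y\|_Y$, then iterating on the defect ($y_0=y$, $y_{k+1}=y_k-MB_\mu y_k$, $u=\sum_k B_\mu y_k$) would give a convergent series in $X$ with $Mu=y$, making $M$ surjective with an $X$-bounded right inverse --- contradicting the remark following Corollary \ref{cor:MMatrix}, which stresses that $M:X\to X$ is \emph{not} surjective and that this is the very reason Theorem \ref{thm:CravenNashed} is needed. Your contraction factor $\theta=\mu+\varepsilon(s_0)C_\mu$, the bootstrap keeping the iterates in the $X$-ball $B_{s_0}(a)$, and the summability of $\|x_{n+1}-x_n\|_X$ all rest on this unavailable bound; with only the $\mu$-norm bound you cannot control $\|x_{n+1}-x_n\|_X$, which is exactly what your Lipschitz estimate on $R$ (strong differentiability holds only with respect to $\|\cdot\|_X$) requires. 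This two-norm mismatch is the real content of the Craven--Nashed theorem; your final paragraph locates the delicacy in choosing $\mu(t)\to0$, but it already bites at every single iteration step for fixed $\mu$. A related defect: in the paper's application the direction $c$ is produced by Lemma \ref{lem:Surjective} as an element of $X_{n(\mu)}$, not necessarily of $X$, so your starting point $x_0=a+tc$ need not lie in the domain of $G$ and $R(x_0)$ is undefined; your scheme tacitly assumes $c\in X$.

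Two smaller points. Your algebraic reduction $-G(a+tc+\eta)=[tq+(1-t)(-G(a))]-M\eta-R(a+tc+\eta)$ is correct, and your observation that one needs $-G(a)\in S$ rather than merely $G(a)\in S$ is a fair catch: with only $G(a)\in S$ the statement as printed fails already for $X=Y=\mathbb{R}$, $S=[0,\infty)$, $G(x)=1+x$, $a=0$, $c=-1$, so the hypothesis should be read as $-G(a)\in S$ (harmless here, since the theorem is applied with $G(0,0,0)=0$). However, your claim that this is \emph{forced} by the conclusion via $-G(x(t))\to-G(a)$ is not available as stated, because $x(t)\to a$ only in the moving weak norms $\|\cdot\|_{\mu(t)}$ while $G$ is controlled only in $\|\cdot\|_X$. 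Finally, the hypothesis $\gamma<1$ is never actually used in your argument --- in the genuine proof it is what makes quantities of the form $\mu\,\Gamma(\mu)=k_0\mu^{1-\gamma}$ small, i.e.\ it enters exactly the per-step bookkeeping that the assumed $X$-bound on $B_\mu$ allowed you to bypass.
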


The original statement of Craven and Nashed's Implicit Function Theorem uses a weaker form of the derivative called the Hadamard derivative $\cite{CravenNashed}$. Since we will only be concerned with the stronger Fr\'echet derivative in this work, we merely restate the theorem with this form of differentiability. Since an appropriate reference was unable to be found, a proof that strong Fr\'echet differentiability at a point implies restricted strong Hadamard differentiability at the same point is provided in Appendix $\ref{sec:Appendix1}$. This therefore poses no problem with our restatement of the theorem here.

\section{Existence of Rotating Waves} \label{sec:Existence} 

In Part I of this manuscript we introduced the notion of a rotating wave in this discrete spatial setting by considering the rotation operator acting on the indices of the lattice given by
\begin{equation} \label{RotationOperator}
	R(z_{i,j}) = z_{j,1-i}.
\end{equation}  
The effect this operator has on the closest cells to its centre of rotation is shown in Figure $\ref{fig:Rotation}$. One can see that we rotate the lattice clockwise through an angle of $\pi/2$ about a theoretical centre cell at $i=j=1/2$, which will act as the centre of rotation for our rotating wave solution. Although, it should be noted that due to the translational invariance of the lattice this centre can be chosen to lie between any square arrangement of cells and still give a rotating wave solution. We can therefore recall the following definition of a rotating wave in this setting, which was introduced in Part I. 

\begin{figure} 
	\centering
	\includegraphics[height = 5cm]{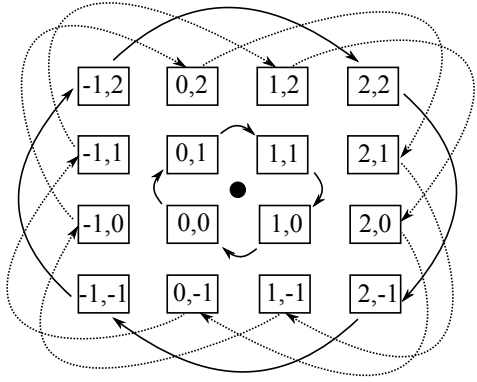}
	\caption{A diagram showing how the rotation operator defined in ($\ref{RotationOperator}$) effects elements of the lattice. The operator rotates lattice points by $\pi/2$ about a theoretical centre cell at $i=j=1/2$, represented by the dot in the centre of the diagram.}
	\label{fig:Rotation}
\end{figure}

\begin{defn} \label{def:RotWaveLDS}
	A {\bf rotating wave solution} of system $(\ref{LambdaOmegaLDS})$, denoted $\{z_{i,j}(t)\}_{(i,j)\in\mathbb{Z}^2}$, is a periodic solution with period $T > 0$ such that for all $(i,j) \in \mathbb{Z}^2$ and $t \in \mathbb{R}$ we have
	\begin{equation} \label{RotatingSymmetry}
		R(z_{i,j}(t)) = z_{i,j}(t + T/4).
	\end{equation}
\end{defn}

In this section we will present the main result of this work by demonstrating how solutions in a reduction of the lattice can be extended via symmetries to a solution over the full lattice which represent a rotating wave. We begin by moving our problem to an abstract setting in which rotating wave solutions to $(\ref{LambdaOmegaLDS})$ can be identified as roots to an infinite nonlinear system of equations. Upon moving to this abstract setting we will see exactly how the work of Part I can be applied to obtain roots of this infinite system of nonlinear systems at the single parameter value $\alpha = 0$. The combination of moving to the abstract setting and identifying a solution for a single parameter value then allows one to apply Theorem $\ref{thm:CravenNashed}$ and gain persistence of the solution into the parameter range $\alpha > 0$ and sufficiently small.

\subsection{Abstract Setting for the Problem} \label{subsec:Abstract}

Begin by introducing the ansatz 
\begin{equation} \label{PolarAnsatz}
	z_{i,j} = r_{i,j}e^{{\rm i}(\Omega t + \theta_{i,j})}, 
\end{equation}	
with $r_{i,j} =r_{i,j}(t)$ and $\theta_{i,j} = \theta_{i,j}(t)$ for each $(i,j)\in\mathbb{Z}^2$. Then the LDS ($\ref{LambdaOmegaLDS}$) can now be written in polar form as
\begin{equation} \label{FullPolarLattice}
	\begin{split}
		&\dot{r}_{i,j} = \alpha\sum_{i',j'} (r_{i',j'}\cos(\theta_{i',j'} - \theta_{i,j}) - r_{i,j}) + r_{i,j}\lambda(r_{i,j}), \\
		&\dot{\theta}_{i,j} =  \alpha\sum_{i',j'} \frac{r_{i',j'}}{r_{i,j}}\sin(\theta_{i',j'} - \theta_{i,j}) + (\omega(r_{i,j},\alpha) - \Omega), \ \ \ (i,j) \in \mathbb{Z}^2.
	\end{split}
\end{equation}    
Under Hypothesis $\ref{Hyp:LambdaOmega}$, without loss of generality we may assume $\Omega := \Omega(\alpha) = \omega(a,\alpha)$ for each $\alpha$. Indeed, if $\Omega \neq \omega(a,\alpha)$ then we may apply the linear change of variable 
\begin{equation}
	\hat{\theta}_{i,j}(t) = \theta_{i,j}(t) - (\omega(a,\alpha) - \Omega)t
\end{equation}
thereby reducing equations ($\ref{FullPolarLattice}$) to
\begin{equation} \label{FullPolarLattice2}
	\begin{split}
		&\dot{r}_{i,j} = \alpha\sum_{i',j'} (r_{i',j'}\cos(\hat{\theta}_{i',j'} - \hat{\theta}_{i,j}) - r_{i,j}) + r_{i,j}\lambda(r_{i,j}), \\
		&\dot{\hat{\theta}}_{i,j} =  \alpha\sum_{i',j'} \frac{r_{i',j'}}{r_{i,j}}\sin(\hat{\theta}_{i',j'} - \hat{\theta}_{i,j}) + \alpha\omega_1(r_{i,j},\alpha), \ \ \ (i,j) \in \mathbb{Z}^2. 
	\end{split}
\end{equation}  
Moreover, a solution (if it exists) becomes
\begin{equation}
	z_{i,j}(t) = r_{i,j}(t)e^{{\rm i}(\Omega t + \theta_{i,j}(t))} = r_{i,j}(t)e^{{\rm i}(\Omega t + \hat{\theta}_{i,j}(t) + (\omega(a,\alpha) - \Omega)t)} = r_{i,j}(t)e^{{\rm i}( \omega(a,\alpha) t + \hat{\theta}_{i,j}(t))}, 	
\end{equation} 
showing that we can take $\Omega = \omega(a,\alpha)$ without any loss of generality in the lattice system. 

It is system $(\ref{FullPolarLattice2})$ which will be of interest throughout this section and the next. Searching for nontritival steady-state solutions to the system $(\ref{FullPolarLattice2})$ with $\alpha \geq 0$ requires solving the nonlinear equations 
\begin{equation} \label{PolarRoots}
	\begin{split}
		&0 = \alpha\sum_{i',j'} (r_{i',j'}\cos(\theta_{i',j'} - \theta_{i,j}) - r_{i,j}) + r_{i,j}\lambda(r_{i,j}), \\
		&0 =  \sum_{i',j'} \frac{r_{i',j'}}{r_{i,j}}\sin(\theta_{i',j'} - \theta_{i,j}) + \omega_1(r_{i,j},\alpha),  
	\end{split}	
\end{equation} 
upon dropping the hats, for all $(i,j) \in \mathbb{Z}^2$. Then one sees that solving these nonlinear equations for nontrivial steady-states results in a periodic solution $\{z_{i,j}(t)\}_{(i,j)\in\mathbb{Z}^2}$ of the form $(\ref{PolarAnsatz})$ where each element of the lattice is oscillating with a frequency of $2\pi/\omega(a,\alpha)$. In order to solve these equations we present the following definition.

\begin{defn} \label{defn:ReducedSystem} 
	We refer to the {\bf reduced system} as the lattice dynamical system $(\ref{FullPolarLattice2})$ restricted to the indices $\Lambda \subset \mathbb{Z}^2$ given by
	\begin{equation} \label{Lambda}
		\Lambda = \{(i,j) \in \mathbb{Z}^2|\ i\geq 1\ {\rm and} \ 2 - i \leq j \leq i\}.
	\end{equation} 
	along with the boundary conditions
	\begin{equation} \label{ReducedBCs}
		\begin{split}
		&\bullet R(r_{i,i}) = r_{i,i}, \\
		&\bullet R(\theta_{i,i}) = \theta_{i,i} + \frac{\pi}{2}, \\
		&\bullet R(r_{i+1,i-1)} = r_{i+1,1-i},\\
		&\bullet R(\theta_{i+1,1-i}) = \theta_{i+1,1-i} + \frac{3\pi}{2}.
		\end{split}
	\end{equation}
\end{defn}

\begin{rmk}
	 The indices of the reduced system, denoted by $\Lambda$, are represented in Figure $\ref{fig:Lattice}$ by the shaded cells for visual reference. The key point here is that 
	\begin{equation}
		\mathbb{Z}^2 = \Lambda \sqcup R(\Lambda) \sqcup R^2(\Lambda) \sqcup R^3(\Lambda),
	\end{equation}
	and hence we have partitioned the full lattice into four mutually disjoint subsets. A steady-state solution in the reduced system will then be used to determine a solution over the entire lattice by forcing the 	rotational symmetry condition $(\ref{RotatingSymmetry})$, thus obtaining a solution to the full system with the required symmetry. The boundary conditions $(\ref{ReducedBCs})$ give the rotational symmetry requirement $R(\theta_{i,i}) = \theta_{i,i} + \frac{\pi}{2}$ along with the requirement that $R^3(\theta_{1+i,1-i}) = \theta_{1+i,1-i} + \frac{3\pi}{2}$. Moreover, when viewing a single column in $\Lambda$ we find that 	these boundary conditions impose that the top and bottom of each column are linked. For example, the differential equation at the index $(1,1)$ becomes before canceling terms 
\begin{equation}
\begin{split}
  \dot{\theta}_{1,1} &
= \underbrace{\sin(\theta_{2,1} - \theta_{1,1})}_\text{Right}
+ \underbrace{\sin(\theta_{1,1} + \frac{\pi}{2} - \theta_{1,1})}_\text{Down} 
+ \underbrace{\sin(\theta_{1,1} + \frac{3\pi}{2} -
  \theta_{1,1})}_\text{Left}  \\
&+ \underbrace{\sin(\theta_{2,0} + \frac{3\pi}{2} - \theta_{1,1})}_\text{Up}.
\end{split}
\end{equation}
	The boundary conditions on the radial components follow in a similar manner in that $R(r_{i,i}) = r_{i,i}$ and $R^3(r_{1+i,1-i}) = r_{1+i,1-i}$. Figure $\ref{fig:ReducedGraph}$ provides a visual representation of the connections in $\Lambda$ prescribed by Definition $\ref{defn:ReducedSystem}$.  
\end{rmk}

\begin{figure} 
	\centering
	\includegraphics[height = 6cm]{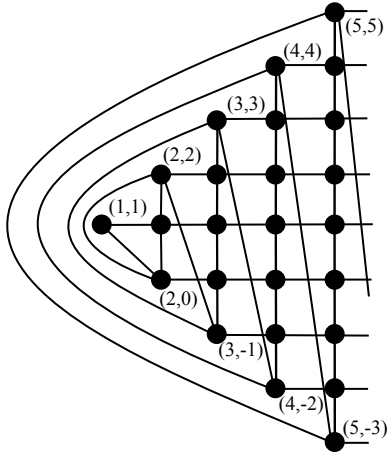}
	\caption{The connections of the reduced system. Here the dots represent the elements at each index $(i,j)\in\Lambda$ and the lines connecting these dots represent the connections of the reduced system per Definition $\ref{defn:ReducedSystem}$.}
	\label{fig:ReducedGraph}
\end{figure} 

Abstractly, solving for steady-states to the reduced system requires obtaining zeros of the infinite set of nonlinear equations given by  
\begin{equation} \label{FMapping}
	\begin{split}
		F_{i,j}^1(\alpha, r,\theta) = \alpha\sum_{i',j'}& [r_{i',j'}\cos(\theta_{i',j'} - \theta_{i,j}) - r_{i,j}] + r_{i,j}\lambda(r_{i,j}), \\
		F_{i,j}^2(\alpha, r,\theta) = \sum_{i',j'}& \frac{r_{i',j'}}{r_{i,j}}\sin(\theta_{i',j'} - \theta_{i,j}) + \omega_1(r_{i,j},\alpha),
	\end{split}
\end{equation} 
for all $(i,j) \in \Lambda$ along with the boundary conditions $(\ref{ReducedBCs})$. When $\alpha = 0$ we obtain 
\begin{equation}
	F_{i,j}^1(0,r,\theta) = r_{i,j}\lambda(r_{i,j}),	
\end{equation}  
which from condition $(1)$ of Hypothesis $\ref{Hyp:LambdaOmega}$ we have that a nontrivial solution exists given by $r_{i,j} = a > 0$ for all $(i,j)\in\Lambda$. Clearly this solution satisfies the boundary conditions of the reduced system since all elements are identical. We denote the element ${\bf a} = \{a\}_{(i,j) \in \Lambda}$. Evaluating the second components of $(\ref{FMapping})$ at $\alpha = 0$ and $r = {\bf a}$ results in
\begin{equation} \label{InfinitePhase}
	F_{i,j}^2(0,{\bf a},\theta) = \sum_{i,j}\sin(\theta_{i',j'} - \theta_{i,j}), 
\end{equation}  
since condition $(2)$ of Hypothesis $\ref{Hyp:LambdaOmega}$ implies that $\omega_1(a,0) = 0$. 

In $\cite{MyWork}$ it was shown that the system of equations
\begin{equation} \label{InfinitePhase2}
	0 = \sum_{i,j}\sin(\theta_{i',j'} - \theta_{i,j})
\end{equation}
over all indices $(i,j) \in \mathbb{Z}^2$ possesses a nontrivial solution which corresponds to the phase lags of a rotating wave. That is, if we denote this solution $\{\bar{\theta}_{i,j}\}_{(i,j)\in\mathbb{Z}^2}$, then it was shown that 
\begin{equation}
	R(\bar{\theta}_{i,j}) = \bar{\theta}_{i,j} + \frac{\pi}{2},
\end{equation}   
for all $(i,j) \in \mathbb{Z}^2$. Therefore, considering $\bar{\theta} = \{\bar{\theta}_{i,j}\}_{(i,j)\in\Lambda}$ we in turn have that $F_{i,j}^2(0,{\bf a},\bar{\theta}) = 0$ for all $(i,j) \in \Lambda$. Coupling this with the results above implies that we have therefore identified a nontrivial root of the infinite system of nonlinear equations $\{(F^1_{i,j},F^2_{i,j})\}_{(i,j) \in \Lambda}$, and our goal is now to extend this solution to a solution for small $\alpha > 0$.

\subsection{Statement of the Main Result} \label{subsec:MainThm}

The discussion in the previous subsection immediately leads to the following result.

\begin{thm} \label{thm:ReducedSolution} 
	Let $\lambda$ and $\omega$ be functions satisfying Hypothesis $\ref{Hyp:LambdaOmega}$. For $\alpha > 0$ and sufficiently small, there exists uniformly bounded $r(\alpha) = \{r_{i,j}(\alpha)\}_{(i,j)\in\Lambda}$ and $\theta(\alpha) = \{\theta_{i,j}(\alpha)\}_{(i,j)\in\Lambda}$ steady-state solutions to the reduced system. That is, these functions satisfy 
	\begin{equation}
		\begin{split}
			&F_{i,j}^1(\alpha,r(\alpha),\theta(\alpha)) = 0 \\
			&F_{i,j}^2(\alpha,r(\alpha),\theta(\alpha)) = 0
		\end{split}
	\end{equation}
	for all $(i,j)\in \Lambda$ and the boundary conditions $(\ref{ReducedBCs})$. As $\alpha \to 0^+$ we have $r(\alpha) \to {\bf a}$ uniformly and $\theta(\alpha) \to \bar{\theta}$. 
\end{thm}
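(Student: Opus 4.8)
The plan is to verify the hypotheses of Theorem~\ref{thm:CravenNashed} (Craven--Nashed) with the cone $S = \{0\}$, so that the condition $-G(x) \in S$ becomes $G(x) = 0$, and the solution is exactly a root of the reduced system. First I would set up the Banach spaces carefully. The natural choice is to work with perturbations: write $r_{i,j} = a + \rho_{i,j}$ and $\theta_{i,j} = \bar\theta_{i,j} + \phi_{i,j}$, and let the domain space $X$ be (a subspace of) $\ell^\infty(\Lambda) \times \ell^\infty(\Lambda)$ cut out by the linearized boundary conditions $(\ref{ReducedBCs})$, with target space $Y$ also a suitable $\ell^\infty$-type space so that $G(\alpha,\cdot,\cdot) = \{(F^1_{i,j},F^2_{i,j})\}_{(i,j)\in\Lambda}$ maps $X$ into $Y$. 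Here $\alpha$ should be promoted to a component of the domain variable (a one-dimensional factor $\mathbb{R}$), since Theorem~\ref{thm:CravenNashed} produces a curve $x = a + tc + \eta(t)$ and we want $t$ to play the role of $\alpha$; concretely one takes $a = (0, \mathbf{0}, \mathbf{0})$ as the base point, $G(a) = 0$ (which holds by the computation in Subsection~\ref{subsec:Abstract}: at $\alpha = 0$, $r = \mathbf{a}$, $\theta = \bar\theta$ all equations vanish), and $c = (1, \mathbf{0}, \mathbf{0})$ the direction of increasing $\alpha$.

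Second, I would check strong Fr\'echet differentiability of $G$ at $a$. By Theorem~\ref{thm:StrongDiff} this reduces to showing $G$ is continuously Fr\'echet differentiable near $a$; since the nonlinearities are built from $\cos$, $\sin$, $\lambda$, $\omega_1$ composed with bounded sequences, and $\lambda,\omega_1$ are $C^1$ by Hypothesis~\ref{Hyp:LambdaOmega}, this is a routine (if tedious) estimate on $\ell^\infty$ — the main subtlety is the $r_{i',j'}/r_{i,j}$ factor in $F^2$, which is smooth as long as the radial components stay bounded away from $0$, true in a neighborhood of $r = \mathbf{a}$ with $a > 0$. Third, and this is where the real content lies, I would compute the Fr\'echet derivative $M = G'(a)$ and exhibit approximate right inverses $B_\mu$ with bound $\Gamma(\mu) = k_0\mu^{-\gamma}$, $\gamma < 1$. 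Differentiating at $\alpha = 0$, $r = \mathbf{a}$, $\theta = \bar\theta$: the $\partial_\rho F^1$ block is the diagonal (hence boundedly invertible) operator with entries $a\lambda'(a) \neq 0$; the $\partial_\alpha$ and $\partial_\phi$ contributions to $F^1$ all carry a factor $\alpha$ and vanish at $\alpha = 0$; and the $\partial_\phi F^2$ block is the graph Laplacian of the reduced connection graph (Figure~\ref{fig:ReducedGraph}) evaluated at the phase configuration $\bar\theta$, i.e. the linearization of $(\ref{InfinitePhase2})$. This graph-Laplacian-type operator on $\ell^\infty(\Lambda)$ is precisely the non-invertible operator alluded to in the introduction — it has a nontrivial kernel/cokernel (constants, or the analog thereof) and fails to be surjective onto $\ell^\infty$ — so one cannot invert it outright; instead I expect to borrow from \cite{MyWork} (Part I) the construction of the approximate right inverse for exactly this Laplacian-type operator, which is where the $\mu$-dependent norms $\|\cdot\|_\mu$ and the polynomial bound $\Gamma(\mu) = k_0\mu^{-\gamma}$ come from. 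The block-triangular structure (the $F^1$ block invertible, the coupling between blocks vanishing at $\alpha=0$) lets one assemble $B_\mu$ for the full $M$ from the genuine inverse on the radial block and the approximate right inverse on the phase block.

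Fourth, I would verify the remaining algebraic conditions of Theorem~\ref{thm:CravenNashed}: that $b = G(a) \in S = \{0\}$ (immediate), and that $c = (1,\mathbf{0},\mathbf{0})$ normalized so $\|c\|_\mu = 1$ satisfies $-[G(a) + G'(a)c] \in S$, i.e. $G'(a)c = 0$; this holds because the $\partial_\alpha$ column of $M$ is zero (every $\alpha$-derivative of $F^1$ carries a factor $\alpha$, and $\partial_\alpha F^2 = \partial_\alpha \omega_1(a,\alpha)|_{\alpha=0}$, which — using $\omega_1(a,\alpha) = 0$ for \emph{all} $\alpha$ from Hypothesis~\ref{Hyp:LambdaOmega}(2) — also vanishes). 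Then Theorem~\ref{thm:CravenNashed} yields, for small $t > 0$, a solution $x(t) = a + tc + \eta(t) \in X_{\mu(t)}$ of $G(x) = 0$ with $x \neq a$ and $\|\eta(t)\|_{\mu(t)} = o(t)$; reading off components, $\alpha = t + o(t)$ (so after reparametrizing, a genuine interval of small positive $\alpha$), $r(\alpha) = \mathbf{a} + o(1)$ and $\theta(\alpha) = \bar\theta + o(1)$, which gives exactly the claimed steady states converging to $\mathbf{a}$ and $\bar\theta$. Finally I would note that because $x(t)$ lives in the completion $X_{\mu(t)}$ rather than $X$ itself, one must argue the solution is genuinely in $X$ (or at least in $\ell^\infty$, uniformly bounded) — this is typically handled by a bootstrapping/consistency argument using that $G$ is defined and the equations make sense on the larger space, together with the monotone convergence $\|\cdot\|_\mu \nearrow \|\cdot\|_X$; I expect this point and the construction of $B_\mu$ for the phase-Laplacian block to be the two main obstacles, with the latter being the one that genuinely requires the machinery developed in Part I.
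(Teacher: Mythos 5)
There are two genuine gaps. The first is the verification of the cone condition. You take $S=\{0\}$ and $c=(1,\mathbf{0},\mathbf{0})$, and you justify $G'(a)c=0$ by claiming that every $\alpha$-derivative of $F^1$ carries a factor of $\alpha$. It does not: differentiating $\alpha\sum_{i',j'}[r_{i',j'}\cos(\theta_{i',j'}-\theta_{i,j})-r_{i,j}]$ in $\alpha$ removes the factor $\alpha$, so at the base point the $\alpha$-column of the derivative acting on $F^1$ is the sequence $a\sum_{i',j'}[\cos(\bar\theta_{i',j'}-\bar\theta_{i,j})-1]$, which is nonzero because the rotating-wave phases $\bar\theta$ are not constant (this is exactly the operator $M_{21}$ in $(\ref{M21})$). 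Hence $G'(a)c\neq 0$ and the hypothesis $-[G(a)+G'(a)c]\in S$ of Theorem \ref{thm:CravenNashed} fails for your choice of $c$ and $S$. (Your observation that $\partial_\alpha F^2$ vanishes, via $\omega_1(a,\alpha)\equiv 0$, is correct, as is the vanishing of $\partial_\psi F^1$ at $\alpha=0$.) The paper avoids this by enlarging the map to $G=(\alpha,G^1,G^2)$, taking $S=\mathbb{R}\times\{0\}\times\{0\}$, and choosing $c$ as a (normalized) preimage of $(1,0,0)^T$ under the closure of $M$; the resulting $c$ necessarily has a nontrivial radial component $c_2=-M_{22}^{-1}M_{21}c_1$, and its phase component exists only after passing to the completion $X_{n(\mu)}$, since $M_{33}$ has merely dense range — this is the content of Lemma \ref{lem:Surjective}, and it cannot be bypassed by picking $c$ in the $\alpha$-direction alone. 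A repaired version of your $S=\{0\}$ scheme would have to solve $M_{33}c_3=-M_{32}c_2$, which runs into the same dense-range issue and again needs the completion machinery.

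The second gap is the approximate right inverse. You propose to "borrow from Part I" the construction of $B_\mu$ and the bound $\Gamma(\mu)=k_0\mu^{-\gamma}$ for the linearized phase operator, but Part I contains no such construction; it only produces the $\alpha=0$ phase solution $\bar\theta$ and the sign information $\cos(\bar\theta_{i',j'}-\bar\theta_{i,j})\geq 0$. Building $B_\mu$ and the $\mu$-dependent norms is the bulk of the actual proof: one must first make the linearization injective with dense range, which the paper achieves by inserting the weight $i^{-1}$ into $G^2$ so that the phase block maps $c_0(\Lambda)\to c_0(\Lambda)$ (constants are excluded from $c_0$ and from its dual $\ell^1$, giving Lemma \ref{lem:OperatorT}); on your space $\ell^\infty(\Lambda)\times\ell^\infty(\Lambda)$ the graph-Laplacian-type block has the constants in its kernel, so even injectivity fails and no approximate-right-inverse scheme of this kind can start. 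One must then construct the norms $\|\cdot\|_\mu$ explicitly (via the finite-dimensional subspaces $E_n$, the projections $P_n$ and $P_{M(E_n)}$, the finite inverses $M_n^{-1}$ with constants $C(n)$, and the inductive choice of $n(\mu)$ giving $\Gamma(n(\mu))\leq k_0\mu^{-1/2}$), verify the monotone convergence $\|x\|_n\nearrow\|x\|_X$, and only then apply Theorem \ref{thm:CravenNashed}. Your final concern about the solution living in the completion is legitimate but is resolved by the design of these norms: $|\alpha|$ and $\|s\|_\infty$ appear directly in $\|\cdot\|_n$, so the coupling constant and the radial perturbation are genuinely controlled in $\ell^\infty$, which is how the paper gets uniform boundedness and $\alpha(t)>0$ (the first component of the defining equation yields $\alpha(t)=t/\|c'\|_{n(\mu)}$ exactly). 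So the overall strategy (Craven--Nashed at the anti-continuum solution, invertible radial block, degenerate phase block) is the right one, but the two steps you defer or assert — the choice of $c$ compatible with the cone, and the construction of $B_\mu$ with its $\mu^{-\gamma}$ bound on a space where the phase linearization is injective — are precisely where the proof's real content lies, and as written both steps fail.
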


The proof of Theorem $\ref{thm:ReducedSolution}$ is a meticulous application of the Theorem $\ref{thm:CravenNashed}$ and will be undertaken in the following section. Prior to proving Theorem $\ref{thm:ReducedSolution}$ we provide the main result of the paper, which comes as a direct consequence of the previous result. 

\begin{cor} \label{thm:RotatingExistence} 
	Let $\lambda$ and $\omega$ be functions satisfying Hypothesis $\ref{Hyp:LambdaOmega}$. Then for $\alpha > 0$ sufficiently small there exists a nontrivial uniformly bounded rotating wave solution to the lattice dynamical system $(\ref{LambdaOmegaLDS})$ rotating with frequency $2\pi/\omega(a,\alpha)$. 
\end{cor}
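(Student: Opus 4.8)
\emph{The approach.} The plan is to assemble the full-lattice solution by rotating the reduced-system steady state supplied by Theorem~\ref{thm:ReducedSolution} around the lattice, then to check that the boundary conditions \eqref{ReducedBCs} are exactly what is needed for the rotated copies to patch together into a genuine solution of \eqref{LambdaOmegaLDS}; converting back through the ansatz \eqref{PolarAnsatz} produces a time-periodic solution, and the phase shifts recorded in \eqref{ReducedBCs} force the rotational symmetry of Definition~\ref{def:RotWaveLDS}. Concretely, fix $\alpha>0$ small enough that Theorem~\ref{thm:ReducedSolution} applies, and let $\{(r_{i,j}(\alpha),\theta_{i,j}(\alpha))\}_{(i,j)\in\Lambda}$ be the steady state it provides. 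Using the disjoint decomposition $\mathbb{Z}^2=\Lambda\sqcup R(\Lambda)\sqcup R^2(\Lambda)\sqcup R^3(\Lambda)$, every $(i,j)\in\mathbb{Z}^2$ equals $R^k(i_0,j_0)$ for a unique $(i_0,j_0)\in\Lambda$ and a unique $k\in\{0,1,2,3\}$, and I would define the full configuration by
\[
	r_{i,j} := r_{i_0,j_0}(\alpha), \qquad \theta_{i,j} := \theta_{i_0,j_0}(\alpha) + \tfrac{k\pi}{2}.
\]
Since $R^4=\mathrm{id}$ and $4\cdot\tfrac{\pi}{2}=2\pi$ this is consistent, and on the boundary strata of $\Lambda$ it returns precisely the relations \eqref{ReducedBCs}.

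\emph{The key step.} One must show the extended configuration is a steady state of \eqref{FullPolarLattice2} on all of $\mathbb{Z}^2$. Because $\alpha>0$, dividing the phase equation of \eqref{FullPolarLattice2} by $\alpha$ shows that a steady state is exactly an index-by-index zero of the pair $(F^1_{i,j},F^2_{i,j})$ from \eqref{FMapping}. For interior indices of $\Lambda$ all four nearest neighbours again lie in $\Lambda$, so this is immediate from $F^1_{i,j}=F^2_{i,j}=0$. For the boundary indices of $\Lambda$ the neighbour falling outside $\Lambda$ lies in $R(\Lambda)$ or $R^3(\Lambda)$, and by the construction above its radius equals the matching reduced radius while its phase equals the matching reduced phase shifted by $\tfrac{\pi}{2}$ or $\tfrac{3\pi}{2}$, which is the content of \eqref{ReducedBCs}; the worked computation at the index $(1,1)$ in the Remark following Definition~\ref{defn:ReducedSystem} is the prototype of this bookkeeping. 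Finally, since the nearest-neighbour graph on $\mathbb{Z}^2$ is carried to itself by $R$, and \eqref{FullPolarLattice2} sees the phases only through the differences $\theta_{i',j'}-\theta_{i,j}$ (with $\omega_1(r_{i,j},\alpha)$ local) and is hence unaffected by a uniform phase shift, the steady-state equations are invariant under relabelling the configuration by $R$ followed by the compensating shift by $\tfrac{\pi}{2}$; so their validity on $\Lambda$ forces their validity on $R(\Lambda)$, then on $R^2(\Lambda)$ and $R^3(\Lambda)$, hence on all of $\mathbb{Z}^2$. I expect this gluing step to be the main obstacle: it requires carefully matching each of the four boundary families in \eqref{ReducedBCs} against the coupling terms of the untruncated system.

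\emph{Recovering the rotating wave.} Set $\Omega:=\omega(a,\alpha)$ and $z_{i,j}(t):=r_{i,j}e^{\mathrm{i}(\Omega t+\theta_{i,j})}$. By the reduction to \eqref{FullPolarLattice2} and the previous step, $\{z_{i,j}(t)\}_{(i,j)\in\mathbb{Z}^2}$ solves \eqref{LambdaOmegaLDS}, and it is periodic with period $T:=2\pi/\omega(a,\alpha)$ since $\Omega T=2\pi$, as in the statement. For the rotational symmetry, note $R(z_{i,j}(t))=z_{R(i,j)}(t)=r_{R(i,j)}e^{\mathrm{i}(\Omega t+\theta_{R(i,j)})}$, and by construction $r_{R(i,j)}=r_{i,j}$ while $\theta_{R(i,j)}=\theta_{i,j}+\tfrac{\pi}{2}$ modulo $2\pi$, so using $\Omega T/4=\pi/2$,
\[
	R(z_{i,j}(t)) = r_{i,j}e^{\mathrm{i}(\Omega t+\theta_{i,j}+\pi/2)} = r_{i,j}e^{\mathrm{i}(\Omega(t+T/4)+\theta_{i,j})} = z_{i,j}(t+T/4),
\]
which is exactly \eqref{RotatingSymmetry}. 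Nontriviality and uniform boundedness come directly from Theorem~\ref{thm:ReducedSolution}: since $r(\alpha)\to\mathbf{a}$ uniformly with $a>0$, after shrinking $\alpha$ once more one has $\inf_{(i,j)}r_{i,j}(\alpha)\geq a/2>0$, so $z_{i,j}(t)\neq 0$ for all $(i,j)$ and $t$, while $\sup_{(i,j),\,t}|z_{i,j}(t)|=\sup_{(i,j)}r_{i,j}(\alpha)<\infty$ by the uniform bound; rotation merely permutes the indices and changes neither quantity.
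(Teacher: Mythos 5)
Your proposal is correct and follows essentially the same route as the paper: extend the reduced-system steady state by the rotation operator with radial components preserved and phases shifted by $k\pi/2$, use the boundary conditions $(\ref{ReducedBCs})$ (together with the $R$-equivariance of the steady-state equations) to verify the glued configuration solves the full lattice system, and then check the symmetry $(\ref{RotatingSymmetry})$ and uniform boundedness exactly as in the paper's computation with $T_\alpha = 2\pi/\omega(a,\alpha)$. Your explicit equivariance/propagation argument is simply a more detailed rendering of the step the paper states as "straightforward," so there is nothing to correct.
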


\begin{proof}
	The solution of the reduced system from Theorem $\ref{thm:ReducedSolution}$ extends to a solution of the entire lattice via a set of transformations which are derived to correspond to Definition $\ref{def:RotWaveLDS}$ of a rotating wave in this framework. Using the rotation operator defined in $(\ref{RotationOperator})$ we apply the following transformations over the other three distinct regions of the lattice:
\begin{equation} \label{SymmetryExtensions2}	
	\begin{array}{ccc}
		{\rm\bf \underline{Region}} & {\rm\bf \underline{Radial\ Transformation}} & {\rm\bf \underline{Phase\ Transformation}} \\
		\\
		R(\Lambda) & R(r_{i,j}(\alpha)) = r_{i,j}(\alpha) & R(\theta_{i,j}(\alpha)) = \theta_{i,j}(\alpha) + \frac{\pi}{2}\\
		\\
		R^2(\Lambda) & R^2(r_{i,j}(\alpha)) = r_{i,j}(\alpha) & R^2(\theta_{i,j}(\alpha)) = \theta_{i,j}(\alpha) + \pi\\
		\\
		R^3(\Lambda) & R^3(r_{i,j}(\alpha)) = r_{i,j}(\alpha) & R^3(\theta_{i,j}(\alpha)) = \theta_{i,j}(\alpha) + \frac{3\pi}{2} 
	\end{array} 
\end{equation}
It is straightforward to see that these transformations give a solution over the entire lattice since for any elements in the interior of one of the three regions we have that the radial components remain the same as those in the reduced system and the phase components are all translated by exactly the same value, thus leaving their difference unchanged. The interactions with the boundary are taken care of by the boundary conditions imposed on the reduced system. Figure $\ref{fig:RotatingSoln}$ shows the four distinct regions of the lattice and how the solution of the reduced system can be extended to a solution over the entire lattice. Writing 
\begin{equation}
	z_{i,j}(t,\alpha) = r_{i,j}(\alpha)e^{{\rm i} (\omega(a,\alpha)t + \theta_{i,j}(\alpha))}
\end{equation}
for each $(i,j)$ allows one to easily check that
\begin{equation}
	\begin{split}
		R(z_{i,j}(t,\alpha)) &= r_{i,j}(\alpha)e^{{\rm i} (\omega(a,\alpha)t + \theta_{i,j}(\alpha) + \frac{\pi}{2})} \\
		&=r_{i,j}(\alpha)e^{{\rm i} (\omega(a,\alpha)(t + \frac{T_\alpha}{4}) + \theta_{i,j}(\alpha))} \\
		&=z_{i,j}(t + T_\alpha/4,\alpha),
	\end{split}
\end{equation} 
where $T_\alpha = 2\pi/\omega(a,\alpha)$ is the period of the solution for each $\alpha$. Hence, the symmetry requirement $(\ref{RotatingSymmetry})$ has been met by definition of the extensions, thus giving a rotating wave solution to the lattice dynamical system $(\ref{LambdaOmegaLDS})$. Uniform boundedness follows from Theorem $\ref{thm:ReducedSolution}$ and by the extensions over the entire lattice, thus completing the proof of the theorem. 

\begin{figure} 
	\centering
	\includegraphics[height=7cm, width = 7cm]{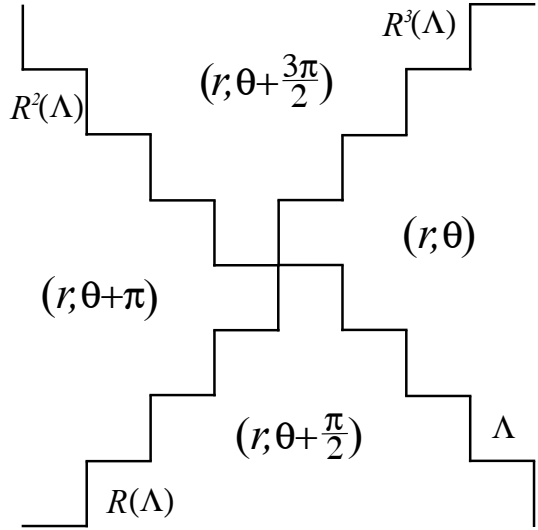}
	\caption{Extension of the solution in the reduced system over the entire lattice using the rotation operator.}
	\label{fig:RotatingSoln}
\end{figure}  

\end{proof}

\begin{rmk}
In the case that $\omega(\cdot,\alpha)$ is a constant function (i.e. $\omega_1 \equiv 0$ in $(\ref{PolarRoots})$) the reduced system $\Lambda$ can be further partitioned to focus our attention simply on those indices satisfying $1 \leq j \leq i$. This method was employed in Part I of this investigation where the odd symmetry of the sine function could be exploited. In this case the phase solutions of system $(\ref{LambdaOmegaLDS})$ would exhibit the same symmetry as that given for the solution $\bar{\theta}$, as discussed Part I. In this way we find that the effect a non-constant $\omega(\cdot,\alpha)$ has on the system is to break the odd symmetry in the differential equations of the phase components. It was shown by Ermentrout and Paullet in $\cite{ErmentroutLambdaOmega}$ that these non-constant $\omega(\cdot,\alpha)$'s can induce the familiar spiral wave spatial pattern, thus breaking the symmetry of the solution $\bar{\theta}$. 
\end{rmk}

\section{Proof of Theorem \ref{thm:ReducedSolution}} \label{sec:Proof} 

Our goal in this section is to prove Theorem $\ref{thm:ReducedSolution}$ through an application of Theorem $\ref{thm:CravenNashed}$. We seek to define a proper mapping for use in Theorem $\ref{thm:CravenNashed}$ whose roots lie in correspondence with roots of the mapping $(\ref{FMapping})$. Our interest in this section is the product of Banach spaces
\begin{equation}
	X := \mathbb{R} \times \ell^\infty(\Lambda) \times c_0(\Lambda),	
\end{equation}
where we recall that $c_0(\Lambda)$ is the closed subspace of $\ell^\infty(\Lambda)$ defined in $(\ref{c0Definition})$. We further equip $X$ with the norm
\begin{equation}
	\|(\alpha,s,\psi)\|_{X} := \max\{|\alpha|, \|s\|_\infty, \|\psi\|_\infty\}.
\end{equation}
The fact that $X$ is complete with respect to this norm follows from the completion of each of the components $\mathbb{R}$, $\ell^\infty(\Lambda)$ and $c_0(\Lambda)$ with respect to their norms, thus giving that $X$ is a Banach space.

Recalling from Hypothesis $\ref{Hyp:LambdaOmega}$ that $a > 0$ is a root of $\lambda$, we denote $B_{\frac{a}{2}}(0)$ to be the ball of radius $a/2$ centred at $0$ in $\ell^\infty(\Lambda)$. Then consider the mapping
\begin{equation} \label{GMappingSpaces}
	G= (\alpha, G^1, G^2)^T:  \mathbb{R} \times B_{\frac{a}{2}}(0) \times c_0(\Lambda) \subset X \to X
\end{equation} 
with components $G^1$ and $G^2$ given by
\begin{equation} \label{GMapping}
	\begin{split}
		G_{i,j}^1(\alpha,s,\psi) &=\alpha\sum_{i',j'} [(a + s_{i',j'})\cos(\bar{\theta}_{i',j'} + \psi_{i',j'} - \bar{\theta}_{i,j} - \psi_{i,j}) - (a+ s_{i,j})\bigg]\\
		 &\ \ \ \ \ \ \ \ \ \ \ \ \ \ \ \ \ \ \ \ \ \  + (a + s_{i,j})\lambda(a + s_{i,j}), \\
		G_{i,j}^2(\alpha,s,\psi) &= i^{-1}\bigg[\sum_{i',j'} \frac{(a + s_{i',j'})}{(a + s_{i,j})}\sin(\bar{\theta}_{i',j'} + \psi_{i',j'} - \bar{\theta}_{i,j} - \psi_{i,j})\\ 
		&\ \ \ \ \ \ \ \ \ \ \  \ \ \ \ \ \ \ \ \ \ \ + \omega_1(a + s_{i,j},\alpha)\bigg].
	\end{split}
\end{equation}
The elements $s$ and $\psi$ can be seen as the deviation from the solution when $\alpha = 0$, and from here it is easy to see that since $i \geq 1$ the roots of $G^1$ and $G^2$ lie in one-to-one correspondence with those of $(\ref{FMapping})$. Indeed, notice that for all $(i,j)\in\Lambda$ we have
\begin{equation}
	\begin{split}
		&G^1_{i,j}(\alpha,s,\psi) = F^1_{i,j}(\alpha,{\bf a} + s, \bar{\theta}+ \psi), \\
		&G^2_{i,j}(\alpha,s,\psi) = i^{-1}F^2_{i,j}(\alpha,{\bf a} + s, \bar{\theta}+ \psi)	
	\end{split}
\end{equation} 
In this way one should notice that the only difference between $(\ref{FMapping})$ and $G^1$, $G^2$ is that now $G^2$ has a decay term added to it, which we will prove acts to guarantee that this mapping is well-defined. 

We define the closed convex cone 
\begin{equation} \label{ConvexCone}
	S:= \mathbb{R} \times \{0\} \times \{0\} \subset X. 
\end{equation}
One sees from our work in the previous section that $G^1(0,0,0) = 0$ and $G^2(0,0,0) = 0$, giving that $G(0,0,0) = (0,0,0) \in S$. Therefore, proving Theorem $\ref{thm:ReducedSolution}$ is reduced to finding elements $(\alpha,s,\psi)$ with $\alpha \neq 0$ such that $G(\alpha,s,\psi) \in S$ since this will imply that $G^1(\alpha,s,\psi) = 0$ and $G^2(\alpha,s,\psi) = 0$. 

\begin{prop}
	The mapping $G$ as defined in $(\ref{GMapping})$ is a well-defined operator from its domain into $X$.  		
\end{prop}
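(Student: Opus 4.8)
The plan is to show that $G$ maps $\mathbb{R} \times B_{\frac{a}{2}}(0) \times c_0(\Lambda)$ into $X = \mathbb{R} \times \ell^\infty(\Lambda) \times c_0(\Lambda)$ by checking the three components separately. The first coordinate is the identity map on $\mathbb{R}$ and requires nothing. For the $G^1$ component I would first observe that on the domain we have $a + s_{i,j} \in (a/2, 3a/2)$ since $\|s\|_\infty < a/2$, so all the amplitudes are uniformly bounded above and bounded away from zero; this is exactly why we restrict $s$ to $B_{\frac{a}{2}}(0)$. Each $G^1_{i,j}$ is a finite sum of four bounded coupling terms (each of the form $(a+s_{i',j'})\cos(\cdots) - (a+s_{i,j})$, bounded in modulus by roughly $3a$) plus the reaction term $(a+s_{i,j})\lambda(a+s_{i,j})$, which is bounded because $\lambda$ is continuous on the compact interval $[a/2,3a/2]$. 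Taking the supremum over $(i,j) \in \Lambda$ gives a finite bound, so $G^1(\alpha,s,\psi) \in \ell^\infty(\Lambda)$. The boundary conditions $(\ref{ReducedBCs})$ only rename which term appears in the four-fold sum at the edge indices of $\Lambda$, so they do not affect any of these estimates.

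The substantive part is the $G^2$ component, where I must show $G^2(\alpha,s,\psi) \in c_0(\Lambda)$, i.e. that for every $\varepsilon > 0$ only finitely many $(i,j) \in \Lambda$ have $|G^2_{i,j}| \geq \varepsilon$. The key is the prefactor $i^{-1}$. I would bound the bracketed quantity in $G^2_{i,j}$ uniformly: the sine sum is a sum of at most four terms each of modulus at most $(a+s_{i',j'})/(a+s_{i,j}) \leq (3a/2)/(a/2) = 3$, so its modulus is at most $12$; and $\omega_1(a+s_{i,j},\alpha)$ is bounded on the compact set $\{|s| \le a/2\} \times \{|\alpha| \le \alpha_0\}$ by continuity, say by some constant $C_0$. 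Hence $|G^2_{i,j}| \leq i^{-1}(12 + C_0) =: C i^{-1}$. Since $\Lambda = \{(i,j) : i \geq 1,\ 2-i \leq j \leq i\}$, for a fixed $i$ there are $2i-1$ choices of $j$, but the bound $C i^{-1}$ is uniform in $j$, and $C i^{-1} < \varepsilon$ as soon as $i > C/\varepsilon$. Thus $\{(i,j) \in \Lambda : |G^2_{i,j}| \geq \varepsilon\}$ is contained in the finitely many columns with $i \leq C/\varepsilon$, each of which is finite, so the whole set is finite. This shows $G^2(\alpha,s,\psi) \in c_0(\Lambda) \subset \ell^\infty(\Lambda)$, completing the verification that $G$ lands in $X$.

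The main obstacle — really the only place any care is needed — is the $c_0(\Lambda)$ membership of $G^2$, and more precisely making sure the denominator $(a+s_{i,j})$ is controlled. Everything hinges on the restriction $s \in B_{\frac{a}{2}}(0)$, which forces $a + s_{i,j} \geq a/2 > 0$ uniformly, so the ratios $(a+s_{i',j'})/(a+s_{i,j})$ and $\omega_1(a+s_{i,j},\alpha)$ are all uniformly bounded; without this restriction the map would not even be defined. Once that uniform bound is in hand, the $i^{-1}$ weight does all the work of pushing the sequence into $c_0$, since the bound on $|G^2_{i,j}|$ depends on the column index $i$ only and tends to $0$ as $i \to \infty$. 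I would also note in passing that although $\psi$ ranges over $c_0(\Lambda)$ rather than a bounded ball, this causes no trouble: $\psi$ enters $G^1$ and $G^2$ only through bounded trigonometric functions of phase differences, so no growth in $\psi$ can spoil the estimates.
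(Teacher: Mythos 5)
Your proof is correct and follows essentially the same route as the paper: bound $G^1_{i,j}$ uniformly using the restriction $\|s\|_\infty < a/2$, the four-neighbour structure, and continuity of $\lambda$ on a compact interval, then use the uniform bound $|G^2_{i,j}| \leq C i^{-1}$ (with the ratio bound $3$ and boundedness of $\omega_1$) together with the weight $i^{-1}$ and the finiteness of each column of $\Lambda$ to get $c_0(\Lambda)$ membership. Your added remark that each column of $\Lambda$ contains $2i-1$ indices simply makes explicit the paper's observation that the exceptional set lies in finitely many columns and is therefore finite.
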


\begin{proof}
	Clearly the first component of $G$ is well-defined, and therefore we need only check that the components $G^1$ and $G^2$ map into $\ell^\infty(\Lambda)$ and $c_0(\Lambda)$, respectively.

	We begin by showing that $G^1$ maps into $\ell^\infty(\Lambda)$. Let $(\alpha,s,\psi) \in X$ be fixed with $\|s\|_\infty < \frac{a}{2}$. Let
	\begin{equation}
		C_\lambda := \max_{R \in [-a/2,a/2]} |\lambda(a + R)|.
	\end{equation}
	Then for each $(i,j)\in\Lambda$ we have
	\begin{equation}
		\begin{split}
			|G^1_{i,j}(\alpha,s,\psi)| &\leq |\alpha|\sum_{i',j'} (|a + s_{i',j'}| + |a+ s_{i,j}|) + |a + s_{i,j}\|\lambda(a + s_{i,j})|  \\
			&\leq 8|\alpha| a + 8|\alpha| \|s\|_\infty + (a + \|s\|_\infty)C_\lambda  \\
			&\leq 12|\alpha| a + \frac{3}{2}aC_\lambda,
		\end{split}
	\end{equation}
	where we have used the fact that each element has at most four nearest-neighbours. Taking the supremum over all $(i,j)\in\Lambda$ gives that
	\begin{equation}
		\|G^1(\alpha,s,\psi)\|_\infty = \sup_{(i,j)\in\Lambda} |G^1_{i,j}(\alpha,s,\psi)| \leq 12|\alpha| a + \frac{3}{2}aC_\lambda < \infty,  
	\end{equation}
	thus showing that $G^1$ is a well-defined mapping.
	
	We now turn to $G^2$. Again let $(\alpha,s,\psi) \in X$ with $\|s\|_\infty < a/2$. Notice that 
	\begin{equation}
		\bigg|\frac{(a + s_{i',j'})}{(a + s_{i,j})}\sin(\bar{\theta}_{i',j'} + \psi_{i',j'} - \bar{\theta}_{i,j} - \psi_{i,j})\bigg| \leq 3	
	\end{equation}
	for all $(i,j)\in\Lambda$ since $|s_{i,j}|,|s_{i',j'}| < \frac{a}{2}$. For the specific $\alpha$ in question, let us denote 
	\begin{equation}
		C_\omega(\alpha) := \max_{R \in [-a/2,a/2]} |\omega_1(a + R,\alpha)|.
	\end{equation} 
	Then, putting this all together gives that
	\begin{equation}
		\begin{split}
			|G^2_{i,j}(\alpha,s,\psi)| &\leq i^{-1}\bigg[\sum_{i',j'}\bigg|\frac{(a + s_{i',j'})}{(a + s_{i,j})}\sin(\bar{\theta}_{i',j'} + \psi_{i',j'} - \bar{\theta}_{i,j} - \psi_{i,j})\bigg| + |\omega_1(a+s_{i,j},\alpha)|\bigg]  \\
			&\leq i^{-1}[12 + C_\omega(\alpha)].
		\end{split}
	\end{equation}
	Now let $\varepsilon > 0$ be arbitrary and let $i_0 \geq 1$ be the smallest integer such that
	\begin{equation}
		i_0 > \varepsilon^{-1}[12 + C_\omega(\alpha)]. 
	\end{equation} 
	Therefore, for all $i \geq i_0$ we have that 
	\begin{equation}
		|G^2_{i,j}(\alpha,s,\psi)| \leq i^{-1}[12 + C_\omega(\alpha)] < \varepsilon,	
	\end{equation}
	showing that the only indices $(i,j)$ such that $G_{i,j}^2(\alpha,s,\psi)$ which can exceed $\varepsilon$ are those with $i < i_0$, which is a finite subset of $\Lambda$. Hence, $G^2$ is well-defined. 
\end{proof} 

We now turn to analyzing the strong Fr\'echet differentiability of the mapping $G$ at $(\alpha,s,\psi) = (0,0,0)$. 

\begin{prop} \label{prop:FrechetDerivative}
	$G(\alpha,s,\psi)$ is strongly Fr\'echet differentiable at $(\alpha,s,\psi) = (0,0,0)$. That is, the Fr\'echet derivative at this point exists, is a bounded linear operator and can be written in the block matrix form as 
	\begin{equation} \label{DerivativeMatrix}
	G'(0,0,0) := M = \begin{bmatrix}
		1 & {\bf 0} & {\bf 0}\\
		M_{21} & M_{22} & {\bf 0} \\
		{\bf 0} & M_{32} & M_{33}
	\end{bmatrix},
	\end{equation}
	where ${\bf 0}$ represents the trivial operator which sends every element to the $0$ of the appropriate space. The operators have the following specific forms:
	\begin{enumerate}
		\item $M_{21} :\mathbb{R} \to \ell^\infty(\Lambda)$ is the bounded linear operator acting by
			\begin{equation} \label{M21}
				[M_{21} \alpha]_{i,j} = a\alpha\sum_{i',j'} [\cos(\bar{\theta}_{i',j'} - \bar{\theta}_{i,j}) - 1],
			\end{equation} 
		for all $(i,j)\in\Lambda$.
		\item $M_{22}:\ell^\infty(\Lambda) \to \ell^\infty(\Lambda)$ is the bounded linear operator acting by
			\begin{equation} \label{M22}
				[M_{22}s]_{i,j} = a\lambda'(a)s_{i,j}
			\end{equation} 
		\item $M_{32}:\ell^\infty(\Lambda) \to c_0(\Lambda)$ is the bounded linear operator acting by
			\begin{equation} \label{M32}
				[M_{32} s]_{i,j} = i^{-1}\bigg[\frac{1}{a}\sum_{i',j'} s_{i',j'} \sin(\bar{\theta}_{i',j'} - \bar{\theta}_{i,j}) + s_{i,j} \partial_1 \omega_1(a + R,\alpha)\bigg|_{(R,\alpha) = (0,0)}\bigg],
			\end{equation} 
			for all $(i,j)\in\Lambda$, where $\partial_1$ denotes the partial derivative with respect to the first argument. 
		\item $M_{33}:c_0(\Lambda) \to c_0(\Lambda)$ is the bounded linear operator acting by
			\begin{equation} \label{M33}
				[M_{33}\psi]_{i,j} = i^{-1}\sum_{i',j'} \cos(\bar{\theta}_{i',j'} - \bar{\theta}_{i,j})(\psi_{i',j'} - \psi_{i,j})	
			\end{equation}
			for all $(i,j)\in \Lambda$.
	\end{enumerate} 
\end{prop}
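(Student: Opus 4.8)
The plan is to prove the stronger statement that $G$ is continuously Fr\'echet differentiable on the open set $U := \mathbb{R}\times B_{\frac{a}{2}}(0)\times c_0(\Lambda)$ and then to invoke Theorem~\ref{thm:StrongDiff}, which upgrades continuous differentiability on an open set to strong Fr\'echet differentiability at each of its points; applied at $(0,0,0)$ this yields the claim, and the explicit form of $G'(0,0,0)$ will be read off from the general formula for $G'$.

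First I would compute, for each fixed $(i,j)\in\Lambda$, the partial derivatives of $G^1_{i,j}$ and $G^2_{i,j}$ with respect to $\alpha$ and with respect to each coordinate $s_{i',j'}$, $\psi_{i',j'}$. Because the coupling in \eqref{GMapping} is nearest-neighbour, each $G^k_{i,j}$ is a function of $\alpha$ together with $s_{i,j}$, its at most four neighbours (interpreted through the boundary conditions \eqref{ReducedBCs}) and the corresponding $\psi$-coordinates only, and it is a smooth function of these finitely many real variables on $U$ — the potentially singular factor $(a+s_{i,j})^{-1}$ in $G^2$ being harmless since $a+s_{i,j}$ is bounded away from $0$ on $B_{\frac{a}{2}}(0)$. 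Assembling these partials into an infinite block matrix produces a candidate linear operator $L(\alpha,s,\psi)$, and one evaluates $L(0,0,0)$ directly. Three blocks drop out: the $\psi$-block of $\partial G^1$ vanishes because the coupling sum in $G^1_{i,j}$ carries the prefactor $\alpha$; the $\alpha$-block of $\partial G^2$ vanishes because Hypothesis~\ref{Hyp:LambdaOmega}(2) gives $\omega_1(a,\alpha)\equiv 0$, hence $\partial_2\omega_1(a,0)=0$, and $G^2_{i,j}$ has no other $\alpha$-dependence at $s=0$; and in the $s$-block of $\partial G^2$ the would-be self-coupling contribution $-a^{-1}\sum_{i',j'}\sin(\bar\theta_{i',j'}-\bar\theta_{i,j})$ vanishes because $\bar\theta$ satisfies $F^2_{i,j}(0,{\bf a},\bar\theta)=0$ for every $(i,j)\in\Lambda$, leaving only the $\partial_1\omega_1$ term. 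What survives is precisely the block matrix \eqref{DerivativeMatrix} with operators \eqref{M21}--\eqref{M33}, so $L(0,0,0)=M$.

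Linearity of $L(\alpha,s,\psi)$ is clear, and boundedness into the asserted spaces follows from the elementary estimates $\|M_{21}\alpha\|_\infty\le 8a|\alpha|$, $\|M_{22}s\|_\infty=a|\lambda'(a)|\,\|s\|_\infty$, $|[M_{32}s]_{i,j}|\le i^{-1}C_1\|s\|_\infty$ and $|[M_{33}\psi]_{i,j}|\le 8i^{-1}\|\psi\|_\infty$; here the decaying factor $i^{-1}$ — exactly as in the proof that $G^2$ is well defined — puts the images of $M_{32}$ and $M_{33}$ in $c_0(\Lambda)$ and bounds their operator norms, and the same estimates hold for $L$ at an arbitrary point of $U$. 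It then remains to verify that $L(\alpha,s,\psi)$ is the G\^ateaux derivative of $G$ at $(\alpha,s,\psi)$ and that $U\ni(\alpha,s,\psi)\mapsto L(\alpha,s,\psi)$ is continuous in operator norm; by the standard criterion this gives Fr\'echet differentiability with $G'=L$, hence $G\in C^1(U)$, and Theorem~\ref{thm:StrongDiff} then closes the argument. Since the coupling has a fixed finite range, the operator norm of a difference $L(\alpha,s,\psi)-L(\tilde\alpha,\tilde s,\tilde\psi)$ is controlled by a fixed multiple of the supremum over $(i,j)\in\Lambda$ of the differences of the (at most five) matrix entries in row $(i,j)$.

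\textbf{The main obstacle} is that this supremum must be made small \emph{uniformly in} $(i,j)\in\Lambda$, even though the matrix entries involve the unbounded family of phase lags $\{\bar\theta_{i,j}\}$. This succeeds because $\bar\theta$ enters only through the differences $\bar\theta_{i',j'}-\bar\theta_{i,j}$ inside the globally Lipschitz functions $\sin$ and $\cos$, and through the coefficient functions $\lambda'$, $\omega_1$, $\partial_1\omega_1$, $\partial_2\omega_1$ and the rational expressions $(a+s_{i',j'})/(a+s_{i,j})$, each uniformly continuous on the relevant bounded set with a modulus of continuity independent of $(i,j)$, while the extra factor $i^{-1}\le 1$ in the $G^2$-rows only helps. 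These uniform estimates are routine but lengthy and constitute the technical heart of the proof; once they are in hand, $G\in C^1(U)$, Theorem~\ref{thm:StrongDiff} applies, and the proposition follows with $G'(0,0,0)=M$.
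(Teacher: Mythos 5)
Your plan is correct, and the formula $L(0,0,0)=M$ you derive is right: the cancellations you invoke ($\lambda(a)=0$, $\omega_1(a,\alpha)\equiv 0$ forcing $\partial_2\omega_1(a,0)=0$, the prefactor $\alpha$ in the coupling sum of $G^1$, and $\sum_{i',j'}\sin(\bar{\theta}_{i',j'}-\bar{\theta}_{i,j})=0$ from $F^2_{i,j}(0,{\bf a},\bar{\theta})=0$) are exactly the structural facts needed, and the uniformity-in-$(i,j)$ issue you single out is indeed the technical heart. However, your route is genuinely different from the paper's. The paper never establishes $G\in C^1$ on a neighbourhood: it proves strong Fr\'echet differentiability \emph{directly at the single point} $(0,0,0)$, splitting the work into two lemmas (one for $G^1$, one for $G^2$) and applying Theorem \ref{thm:StrongDiff} only in finite dimensions, to the scalar building blocks $x_1x_2\cos(x_3-x_4)$, $x\lambda(x)$, $x_1x_2$, and $\omega_1$, to get two-point estimates with a modulus independent of $(i,j)$ (the base point enters the trig terms only through $\bar{\theta}_{i',j'}-\bar{\theta}_{i,j}$), which are then summed over the at most four neighbours and suped over $\Lambda$. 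You instead apply Theorem \ref{thm:StrongDiff} at the level of the Banach-space map, which requires computing the derivative $L(\alpha,s,\psi)$ at every point of $U$, verifying it maps into $\ell^\infty(\Lambda)\times c_0(\Lambda)$ there, and proving operator-norm continuity of $(\alpha,s,\psi)\mapsto L(\alpha,s,\psi)$ — more work, and the general-point derivative is messier since the cancellations available at the base point (which simplify the limiting operator in the paper's argument) are absent away from it; on the other hand it yields a strictly stronger conclusion ($C^1$, hence strong differentiability, on all of $U$) and is arguably more systematic. Two small cautions if you carry it out: the constants in your boundedness estimates for $L$ at a general point depend on $|\alpha|$ and $\|s\|_\infty$ (unlike at the origin), and for the uniform continuity of the ratio entries $(a+s_{i',j'})/(a+s_{i,j})$ and of $\lambda'$, $\partial_1\omega_1$, $\partial_2\omega_1$ you must first restrict to a closed ball strictly inside $B_{\frac{a}{2}}(0)$ and a bounded $\alpha$-interval so that the relevant arguments stay in a fixed compact set; with those precautions the argument closes, with the boundary couplings in $(\ref{ReducedBCs})$ handled exactly as you indicate.
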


The proof of Proposition $\ref{prop:FrechetDerivative}$ is left to Appendix $\ref{sec:Appendix2}$ to maintain that the results of this section are easy to follow. 

\begin{lem} \label{lem:SPartial} 
	The operator $M_{22}:\ell^\infty(\Lambda) \to \ell^\infty(\Lambda)$ is injective, surjective and has a bounded inverse. 
\end{lem}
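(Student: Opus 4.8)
The plan is to observe that $M_{22}$ is nothing more than multiplication by the fixed scalar $c := a\lambda'(a)$, and that this scalar is nonzero. Indeed, Hypothesis \ref{Hyp:LambdaOmega}(1) furnishes both $a > 0$ and $\lambda'(a) \neq 0$, so $c \neq 0$. Hence $M_{22} = cI$, where $I$ denotes the identity operator on $\ell^\infty(\Lambda)$, and all three claimed properties reduce to elementary manipulations with this constant.

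For injectivity I would note that $M_{22}s = 0$ forces $c\,s_{i,j} = 0$ for every $(i,j) \in \Lambda$, and since $c \neq 0$ this gives $s_{i,j} = 0$ for all $(i,j) \in \Lambda$, i.e. $s = 0$. For surjectivity, given an arbitrary $t = \{t_{i,j}\}_{(i,j)\in\Lambda} \in \ell^\infty(\Lambda)$, I would set $s_{i,j} := c^{-1}t_{i,j}$; then $\|s\|_\infty = |c|^{-1}\|t\|_\infty < \infty$, so $s \in \ell^\infty(\Lambda)$, and clearly $M_{22}s = t$. Thus $M_{22}$ is a bijection whose inverse is the operator $t \mapsto c^{-1}t$, which is bounded and linear with $\|M_{22}^{-1}\|_{op} = |c|^{-1} = \big(a|\lambda'(a)|\big)^{-1}$.

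There is no genuine obstacle here: the result is immediate once one recognizes $M_{22}$ as a nonzero rescaling of the identity, the one substantive input being the non-degeneracy condition $\lambda'(a) \neq 0$ from Hypothesis \ref{Hyp:LambdaOmega}. (If one preferred, boundedness of the inverse of the already-established bijection $M_{22}$ would also follow from the bounded inverse theorem, but the explicit formula renders that appeal unnecessary.)
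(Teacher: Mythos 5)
Your proposal is correct and follows essentially the same route as the paper: both treat $M_{22}$ as multiplication by the nonzero scalar $a\lambda'(a)$ (guaranteed by Hypothesis \ref{Hyp:LambdaOmega}), verify injectivity and surjectivity by the explicit formula $s = [a\lambda'(a)]^{-1}y$, and conclude boundedness of the inverse. The only cosmetic difference is that you exhibit the inverse and its operator norm $\bigl(a|\lambda'(a)|\bigr)^{-1}$ explicitly, whereas the paper invokes the bounded-bijection argument; both are fine.
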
 

\begin{proof}
{\em \underline{Injectivity:}} From Hypothesis $\ref{Hyp:LambdaOmega}$ we have that $\lambda'(a) \neq 0$, and therefore $M_{22}$ is a nontrivial diagonal operator. Then one sees that $M_{22} s = 0$ gives for each $(i,j) \in \Lambda$ that
	\begin{equation} \label{M22Injective}
		[M_{22} s]_{i,j} = a\lambda'(a) s_{i,j} = 0 \implies s_{i,j} = 0,
	\end{equation}
	thus giving that $s = 0$ and hence that $M_{22}$ is injective. 
		
	{\em \underline{Surjectivity:}} Consider the element $y = \{y_{i,j}\}_{(i,j)\in\Lambda} \in \ell^\infty(\Lambda)$. Then clearly the element $s = \{[a\lambda'(a)]^{-1}y_{i,j}\}_{(i,j)\in\Lambda}$ belongs to $\ell^\infty(\Lambda)$ with norm given by $\|s\|_\infty = [a\lambda'(a)]^{-1}\|y\|_\infty < \infty$ and satisfies $M_{22}s = y$.  
	
{\em \underline{Bounded Inverse:}} This is an immediate
consequence of the fact that\\
$M_{22}:\ell^\infty(\Lambda) \to \ell^\infty(\Lambda)$ is a bounded bijection.  
\end{proof} 

To obtain comparable results for $M_{33}$ we first provide the following lemma. 

\begin{lem} \label{lem:OperatorT} 
	Consider the operator $T: c_0(\Lambda) \to c_0(\Lambda)$ acting by
	\begin{equation} \label{OperatorT}
		[T_{\bar{\theta}}\psi]_{i,j} = \sum_{i',j'} \cos(\bar{\theta}_{i',j'} - \bar{\theta}_{i,j})(\psi_{i',j'} - \psi_{i,j})	
	\end{equation}
	for all $(i,j)\in \Lambda$. Then $T_{\bar{\theta}}$ is injective and has dense range.  
\end{lem}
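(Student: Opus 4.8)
The statement asserts two things about the weighted graph-Laplacian-type operator $T_{\bar\theta}$ on $c_0(\Lambda)$: injectivity and dense range. I would treat these separately. For injectivity, the natural strategy is a maximum-principle argument exploiting the structure of the reduced graph $\Lambda$ (Figure \ref{fig:ReducedGraph}) together with the boundary conditions \eqref{ReducedBCs}. For dense range, I would appeal to Lemma \ref{lem:Adjoint}: $T_{\bar\theta}$ has dense range if and only if its adjoint $T_{\bar\theta}^*:\ell^1(\Lambda)\to\ell^1(\Lambda)$ is one-to-one. So the whole lemma reduces to showing that two operators — $T_{\bar\theta}$ on $c_0$ and $T_{\bar\theta}^*$ on $\ell^1$ — each have trivial kernel.

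\medskip
\noindent\textbf{Injectivity of $T_{\bar\theta}$.} First I would record that the coefficients $c_{i',j'}^{i,j}:=\cos(\bar\theta_{i',j'}-\bar\theta_{i,j})$ are (strictly) positive; this should follow from the explicit form of the phase-lag solution $\bar\theta$ constructed in \cite{MyWork}, where neighbouring phase differences are small enough to keep the cosine positive — I would cite the relevant estimate from Part I. Granting positivity of the weights, suppose $\psi\in c_0(\Lambda)$ with $T_{\bar\theta}\psi=0$. Since $\psi\in c_0(\Lambda)$, $\|\psi\|_\infty$ is attained: there is an index $(i_*,j_*)$ with $|\psi_{i_*,j_*}|=\|\psi\|_\infty$; WLOG take $\psi_{i_*,j_*}=\|\psi\|_\infty\ge 0$ (replace $\psi$ by $-\psi$ otherwise). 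At that index the equation $\sum_{i',j'}c_{i',j'}^{i_*,j_*}(\psi_{i',j'}-\psi_{i_*,j_*})=0$ is a sum of terms each $\le 0$ with strictly positive coefficients, hence $\psi_{i',j'}=\psi_{i_*,j_*}$ for every neighbour $(i',j')$ of $(i_*,j_*)$ in the reduced graph. The reduced graph with its boundary identifications is connected (this is exactly the content of the remark around Figure \ref{fig:ReducedGraph}), so propagating this equality along paths forces $\psi_{i,j}=\|\psi\|_\infty$ for all $(i,j)\in\Lambda$. But $\psi\in c_0(\Lambda)$ means only finitely many components can exceed any fixed $\varepsilon>0$, so a nonzero constant value is impossible; hence $\psi\equiv 0$.

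\medskip
\noindent\textbf{Dense range via the adjoint.} By the Schauder-basis/transpose description given in the Preliminaries, $T_{\bar\theta}^*$ acts on $\ell^1(\Lambda)$ by the transposed infinite matrix. Writing out $[T_{\bar\theta}\psi]_{i,j}$ shows the $(i,j)$-row has entry $c_{i',j'}^{i,j}$ in column $(i',j')$ for each neighbour and $-\sum_{(i',j')}c_{i',j'}^{i,j}$ on the diagonal; by symmetry of the cosine, $c_{i',j'}^{i,j}=c_{i,j}^{i',j'}$, so $T_{\bar\theta}$ is in fact symmetric as a matrix and $T_{\bar\theta}^*$ acts by essentially the same formula (with weights summed over neighbours of the target index and the appropriate diagonal correction; here the boundary identifications must be handled carefully, since a boundary cell is ``seen'' by its image under $R$). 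Then I would run the $\ell^1$ analogue of the maximum principle: if $\phi\in\ell^1(\Lambda)$ with $T_{\bar\theta}^*\phi=0$, pair with the constant-$1$ ``test sequence'' and with sign patterns, or more cleanly, note that the nonzero entries of $\phi$ of maximal modulus must, by the same neighbour-equality argument run on the transposed stencil, force $\phi$ to be constant on connected components; since the only constant in $\ell^1(\Lambda)$ of an infinite index set is $0$, we get $\phi=0$. Lemma \ref{lem:Adjoint} then yields dense range.

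\medskip
\noindent\textbf{Main obstacle.} The delicate point is \emph{not} injectivity on $c_0$ — that is a clean maximum principle — but rather making the adjoint argument rigorous, for two reasons. First, one must verify that the weights $c_{i',j'}^{i,j}$ are genuinely bounded away from degeneracy (positive), which hinges on importing the correct quantitative description of $\bar\theta$ from \cite{MyWork}; if the cosines were allowed to vanish or change sign the connectivity propagation breaks. Second, the boundary conditions \eqref{ReducedBCs} mean $\Lambda$ is not simply a subgraph of $\mathbb Z^2$ — the top and bottom of each column are glued through the rotation operator $R$ — so when transposing the matrix one must correctly account for which cells contribute to a given row of $T_{\bar\theta}^*$, and check that the resulting stencil is still a connected, strictly-positively-weighted difference operator so that the ``maximal entry implies constant'' step goes through. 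Getting these bookkeeping details right on the glued graph is where the real work lies; the functional-analytic skeleton (extremum attained in $c_0$, no nonzero constants in $\ell^1$, Lemma \ref{lem:Adjoint}) is routine once the graph-theoretic structure is pinned down.
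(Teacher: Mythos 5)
Your proposal is correct in substance and, for the dense-range half, follows exactly the paper's route: identify $(c_0(\Lambda))^*$ with $\ell^1(\Lambda)$, use the even symmetry of cosine to see that the matrix of $T_{\bar{\theta}}$ is symmetric so that $T_{\bar{\theta}}^*$ acts by the same stencil on $\ell^1(\Lambda)$, kill its kernel by noting that nonzero constant sequences do not lie in $\ell^1(\Lambda)$, and invoke Lemma \ref{lem:Adjoint}. Where you genuinely differ is the injectivity half: the paper pairs $T_{\bar{\theta}}\psi$ with $\psi$ and rewrites the (termwise zero) sum as the nonpositive quadratic form $\tfrac{1}{2}\sum_{(i,j)}\sum_{i',j'}-\cos(\bar{\theta}_{i',j'}-\bar{\theta}_{i,j})(\psi_{i',j'}-\psi_{i,j})^2$, forcing the difference across every positively weighted edge to vanish, whereas you run a discrete maximum principle at an index where $|\psi|$ attains its supremum (which exists precisely because $\psi\in c_0(\Lambda)$). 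Both arguments reduce to the same two ingredients---nonnegativity of the weights together with connectivity of the positive-weight subgraph, and the absence of nonzero constants in $c_0(\Lambda)$---and your extremal argument is arguably the more elementary of the two, since it avoids rearranging a doubly infinite sum for a sequence that is merely in $c_0(\Lambda)$, and it transfers verbatim to the adjoint because $\ell^1(\Lambda)\subset c_0(\Lambda)$ so the maximal modulus is again attained. Two corrections to your bookkeeping: the fact imported from Part I is \emph{not} that all the cosines are strictly positive---the paper records exactly two vanishing weights, at the connections of the cell $(1,1)$ with the boundary---so your ``strictly positive coefficients'' step must be weakened to nonnegative weights whose positive-weight subgraph is connected, which is exactly what the propagation needs and what the paper verifies; and the adjoint bookkeeping you flag as the main obstacle is lighter than you fear, since the paper dispatches it with the single observation that the matrix representation is symmetric.
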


\begin{proof}
	{\em \underline{Injectivity:}} It was demonstrated in $\cite{MyWork}$ that all nearest-neighbour interactions in $\Lambda$ of the element $\bar{\theta}$ are such that $|\bar{\theta}_{i',j'} - \bar{\theta}_{i,j}| < \frac{\pi}{2}$ with the exceptions of $|\bar{\theta}_{1,0} - \bar{\theta}_{1,1}|$ and $|\bar{\theta}_{0,1} - \bar{\theta}_{1,1}|$ both taking the value $\frac{\pi}{2}$, and so $\cos(\bar{\theta}_{i',j'} - \bar{\theta}_{i,j}) > 0$ for all but two neighbouring interactions in $\Lambda$. 
	
	Now let us assume that $\psi = \{\psi_{i,j}\}_{(i,j)\in\Lambda}$ is such that $T_{\bar{\theta}} \psi =  0$. Then $[T_{\bar{\theta}}\psi]_{i,j} = 0$ for each $(i,j) \in \Lambda$ and we have
	\begin{equation} \label{QuadraticForm}
		0 = \sum_{(i,j)\in\Lambda} 0\cdot \psi_{i,j} =\sum_{(i,j)\in\Lambda} [T_{\bar{\theta}}\psi]_{i,j}\cdot \psi_{i,j} =\frac{1}{2} \sum_{(i,j)\in\Lambda} \sum_{i',j'} -\cos(\bar{\theta}_{i',j'} - \bar{\theta}_{i,j})(\psi_{i',j'} - \psi_{i,j})^2.  
	\end{equation}  
	To justify this notice that $\cos(\bar{\theta}_{i',j'} - \bar{\theta}_{i,j})$ contributes to the sum twice by the even symmetry of the cosine function. This gives for any index $(i,j)\in\Lambda$ 
	\begin{equation}
		\begin{split}
			\cos(\bar{\theta}_{i',j'} - \bar{\theta}_{i,j})(\psi_{i',j'} - \psi_{i,j})\psi_{i,j} + &\cos(\bar{\theta}_{i,j} - \bar{\theta}_{i',j'})(\psi_{i,j} - \psi_{i',j'})\psi_{i',j'} \\
			&= \cos(\bar{\theta}_{i',j'} - \bar{\theta}_{i,j})(\psi_{i',j'} - \psi_{i,j})(\psi_{i,j} - \psi_{i',j'}) \\
			&= -\cos(\bar{\theta}_{i',j'} - \bar{\theta}_{i,j})(\psi_{i',j'} - \psi_{i,j})^2.
		\end{split}	
	\end{equation}
	Furthermore, the factor of one half in front of the sum comes from the fact that everything is being summed twice, again by the symmetry of the cosine function. Therefore $(\ref{QuadraticForm})$ returns nonpositive values for any sequence indexed by the elements of $\Lambda$ since $\cos(\bar{\theta}_{i',j'} - \bar{\theta}_{i,j}) \geq 0$ for all $(i,j)\in\Lambda$ and its nearest-neighbours, $(i',j')$.
	
	Thus, in order for $(\ref{QuadraticForm})$ to hold, it must be the case that each $\psi_{i,j}$ is equal to its nearest-neighbours in $\Lambda$. This is because the only case when $\cos(\bar{\theta}_{i',j'} - \bar{\theta}_{i,j}) = 0$ is at the index $(1,1)$ and these represent connections with the boundary. Since the element at $(1,1)$ is connected to the element indexed by $(1,2)$, which is in $\Lambda$, we have that every index is connected to at least one other index in $\Lambda$ by some non-zero weight $\cos(\bar{\theta}_{i',j'} - \bar{\theta}_{i,j}) > 0$. This allows one to see that $T_{\bar{\theta}}\psi = 0$ if and only if there exists a $C \in \mathbb{R}$ such that $\psi_{i,j} = C$ for all $(i,j) \in \Lambda$. By definition these constant sequences are not elements of $c_0(\Lambda)$, thus giving that $T_{\bar{\theta}} : c_0(\Lambda) \to c_0(\Lambda)$ is injective.   

{\em \underline{Dense Range:}} Now to see that $T_{\bar{\theta}} : c_0(\Lambda) \to c_0(\Lambda)$ has dense range, one merely applies Lemma $\ref{lem:Adjoint}$. We may equivalently write $T_{\bar{\theta}}$ as an infinite matrix, say $B$, indexed by the elements of $\Lambda \times \Lambda$ such that
	\begin{equation}
		B = (b_{(i_1,j_1),(i_2,j_2)})_{\Lambda\times\Lambda} = \left\{
    		 \begin{array}{ll}
       		-\sum_{(i_1',j_1')} \cos(\bar{\theta}_{i_1',j_1'} - \bar{\theta}_{i_1,j_1}) & : (i_1,j_1) = (i_2,j_2)\\
       		\cos(\bar{\theta}_{i_2,j_2} - \bar{\theta}_{i_1,j_1}) & : (i_1,j_1)\sim (i_2,j_2) \\
       		0 & : otherwise
     		\end{array}
  		\right.
	\end{equation} 
 where we have used the notation $ (i_1,j_1)\sim (i_2,j_2)$ to denote that $(i_1,j_1)$ and $(i_2,j_2)$ are nearest-neighbours. Now again using the even symmetry of the cosine function we see that this matrix is clearly symmetric since for $(i_1,j_1) \neq (i_2,j_2)$ we have
\begin{equation}
	\begin{split}
	b_{(i_1,j_1),(i_2,j_2)} &= \left\{  
		\begin{array}{ll}
        		\cos(\bar{\theta}_{i_2,j_2} - \bar{\theta}_{i_1,j_1}) & : (i_1,j_1)\sim (i_2,j_2) \\
       		0 & : otherwise
     		\end{array}
  		\right.  \\
		&= \left\{ 
		\begin{array}{ll}
        		\cos(\bar{\theta}_{i_1,j_1} - \bar{\theta}_{i_2,j_2}) & : (i_1,j_1)\sim (i_2,j_2) \\
       		0 & : otherwise
     		\end{array}
  		\right.  \\
		&= b_{(i_2,j_2),(i_1,j_1)}. 
	\end{split}
\end{equation} 
This implies that $T_{\bar{\theta}}^* : \ell^1(\Lambda) \to \ell^1(\Lambda)$ merely acts as $T_{\bar{\theta}}$. Repeating the above arguments we see that $T_{\bar{\theta}}^*$ must be injective since the constant sequences are not in $\ell^1(\Lambda)$. This therefore gives that $T_{\bar{\theta}} : c_0(\Lambda) \to c_0(\Lambda)$ has dense range.
\end{proof} 

\begin{cor} \label{lem:PsiPartial} 
	The operator $M_{33}:c_0(\Lambda) \to c_0(\Lambda)$ is injective and has dense range,
\end{cor}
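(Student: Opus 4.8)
The plan is to realize $M_{33}$ as a composition of two operators whose relevant properties are already in hand, namely $M_{33} = D \circ T_{\bar\theta}$, where $T_{\bar\theta} : c_0(\Lambda) \to c_0(\Lambda)$ is the operator of Lemma $\ref{lem:OperatorT}$ and $D : c_0(\Lambda) \to c_0(\Lambda)$ is the diagonal operator $[D\phi]_{i,j} = i^{-1}\phi_{i,j}$. Comparing $(\ref{M33})$ with $(\ref{OperatorT})$ makes this factorization immediate. First I would record the trivial facts about $D$: since $i \geq 1$ on $\Lambda$ we have $|[D\phi]_{i,j}| \leq |\phi_{i,j}|$, so $D$ is a well-defined bounded operator on $c_0(\Lambda)$ with $\|D\|_{op} \leq 1$ (the set $\{(i,j) : |[D\phi]_{i,j}| \geq \varepsilon\}$ is contained in the finite set $\{(i,j) : |\phi_{i,j}| \geq \varepsilon\}$), and $D$ is injective, again because $i \geq 1$ everywhere on $\Lambda$.

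For injectivity of $M_{33}$: if $M_{33}\psi = D(T_{\bar\theta}\psi) = 0$, then injectivity of $D$ gives $T_{\bar\theta}\psi = 0$, and then injectivity of $T_{\bar\theta}$ from Lemma $\ref{lem:OperatorT}$ gives $\psi = 0$. Hence $M_{33}$ is injective.

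For the dense range I would pass to adjoints, exactly as in the proof of Lemma $\ref{lem:OperatorT}$, rather than attempting to deduce it by ``composing'' the two dense-range statements directly. By Lemma $\ref{lem:Adjoint}$ it suffices to show that $M_{33}^* : \ell^1(\Lambda) \to \ell^1(\Lambda)$ is injective. Since $M_{33} = D T_{\bar\theta}$ we have $M_{33}^* = T_{\bar\theta}^* D^*$. The adjoint $D^*$ is the transpose of the diagonal matrix representing $D$, hence the diagonal operator $[D^* x]_{i,j} = i^{-1}x_{i,j}$ on $\ell^1(\Lambda)$, which is injective; and $T_{\bar\theta}^*$ was shown to be injective in the proof of Lemma $\ref{lem:OperatorT}$ (the constant sequences do not lie in $\ell^1(\Lambda)$). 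A composition of injective maps is injective, so $M_{33}^*$ is injective, and therefore $M_{33}$ has dense range by Lemma $\ref{lem:Adjoint}$.

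There is no substantial obstacle here; the only point requiring care is to avoid the tempting but false step of concluding dense range of $M_{33}$ from the dense range of $D$ and of $T_{\bar\theta}$ (a composition of dense-range operators need not have dense range), and instead to route the argument through the adjoint criterion of Lemma $\ref{lem:Adjoint}$, just as was done for $T_{\bar\theta}$ itself. Alternatively, one may argue density directly: $\mathrm{range}(M_{33}) = D\big(\mathrm{range}(T_{\bar\theta})\big)$ is the image of a dense subset of $c_0(\Lambda)$ under the bounded operator $D$, hence dense in $\overline{\mathrm{range}(D)}$, and $\mathrm{range}(D)$ contains every finitely supported sequence and so is itself dense in $c_0(\Lambda)$; either route completes the proof.
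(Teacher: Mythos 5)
Your proof is correct, and the injectivity half is essentially the paper's argument: both reduce $M_{33}\psi=0$ to $T_{\bar\theta}\psi=0$ via the strictly positive weights $i^{-1}$ and invoke Lemma \ref{lem:OperatorT}. For the dense range, however, you take a genuinely different route. The paper argues directly in $c_0(\Lambda)$: given $y$, it truncates to a finitely supported $\tilde y$ within $\varepsilon/2$, reweights to $y'_{i,j}=i\,\tilde y_{i,j}$ (finitely supported, hence in $c_0(\Lambda)$), approximates $y'$ in the range of $T_{\bar\theta}$, and then uses $i^{-1}\le 1$ to conclude $\|M_{33}\psi-\tilde y\|_\infty<\varepsilon/2$. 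You instead factor $M_{33}=D\,T_{\bar\theta}$ with $D$ the diagonal weight operator and apply the adjoint criterion of Lemma \ref{lem:Adjoint} to $M_{33}$ itself: $M_{33}^*=T_{\bar\theta}^*D^*$ is a composition of injective operators on $\ell^1(\Lambda)$ ($T_{\bar\theta}^*$ injective by the proof of Lemma \ref{lem:OperatorT}, $D^*$ diagonal with nonzero entries), so $M_{33}$ has dense range. This is a clean reuse of the duality machinery already set up for $T_{\bar\theta}$; the paper's construction is more hands-on and has the minor virtue of exhibiting explicitly how the weight $i^{-1}$ is absorbed, which is the same mechanism your second, ``direct'' alternative captures (and that alternative is in fact close in spirit to the paper's proof, since the truncation step is precisely what makes the reweighted element land in $c_0(\Lambda)$).

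One side remark in your write-up is off: the step you label ``tempting but false'' is actually valid here. If $T:X\to Y$ has dense range and $D:Y\to Z$ is \emph{bounded} with dense range, then $DT$ has dense range (approximate $z$ by $Dy$, then $y$ by $Tx$, and use $\|D(Tx)-Dy\|\le\|D\|_{op}\|Tx-y\|$); your own alternative argument is exactly this reasoning, with the density of the range of $D$ supplied by the finitely supported sequences. The caveat only matters when the outer map fails to be continuous, which is not the case for $D$. This slip does not affect your proof, since both routes you actually carry out are sound.
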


\begin{proof}
	{\em \underline{Injectivity:}} Let us assume that $\psi = \{\psi_{i,j}\}_{(i,j)\in\Lambda} \in c_0(\Lambda)$ is such that $M_{33}\psi = 0$. This gives that for all $(i,j)\in\Lambda$ we have
	\begin{equation}
		i^{-1}\sum_{i',j'} \cos(\bar{\theta}_{i',j'} - \bar{\theta}_{i,j})(\psi_{i',j'} - \psi_{i,j}) = 0 \iff \sum_{i',j'} \cos(\bar{\theta}_{i',j'} - \bar{\theta}_{i,j})(\psi_{i',j'} - \psi_{i,j}) = 0.
	\end{equation} 
	since $i^{-1} > 0$ for all $i \geq 1$. Therefore, $M_{33}\psi = 0$ if and only if $T_{\bar{\theta}}\psi = 0$, as defined in the statement of Lemma $\ref{lem:OperatorT}$. Since $T_{\bar{\theta}}$ was shown to be injective, it follows that $\psi = 0$, thus showing that $M_{33}$ is injective.  
	
	{\em \underline{Dense Range:}} Let $y = \{y_{i,j}\}_{(i,j)\in\Lambda} \in c_0(\Lambda)$ and $\varepsilon > 0$ be arbitrary. Then, since $y \in c_0(\Lambda)$ there exists an $i_0 \geq 1$ such that $|y_{i,j}| < \varepsilon/2$ for all $i > i_0$. Define the element $\tilde{y}  = \{\tilde{y}_{i,j}\}_{(i,j)\in\Lambda} \in c_0(\Lambda)$ by
	\begin{equation}
		\tilde{y}_{i,j} = \begin{cases}
			y_{i,j} & \text{ if } i \leq i_0 \\
			0 & \text{ if } i > i_0
		\end{cases}.
	\end{equation}
	Then by construction we have that
	\begin{equation}
		\|y - \tilde{y}\|_\infty < \frac{\varepsilon}{2}.
	\end{equation}
	Now consider the element $y' = \{y'_{i,j}\}_{(i,j)\in\Lambda} \in c_0(\Lambda)$ defined by $y'_{i,j} = i\cdot\tilde{y}_{i,j}$ for all $(i,j)\in\Lambda$. Note that since $y'$ only has finitely many nonzero components, it therefore trivially belongs to $c_0(\Lambda)$. Then since $T_{\bar{\theta}}:c_0(\Lambda) \to c_0(\Lambda)$ has dense range, there exists a $\psi \in c_0(\Lambda)$ such that 
	\begin{equation}
		\|T_{\bar{\theta}}\psi - y'\|_\infty = \sup_{(i,j)\in\Lambda} \bigg|\sum_{i',j'}\cos(\bar{\theta}_{i',j'} - \bar{\theta}_{i,j})(\psi_{i',j'} - \psi_{i,j}) - y'_{i,j}\bigg| < \frac{\varepsilon}{2}.
	\end{equation} 
	Hence, 
	\begin{equation}
		\begin{split}
			\|M_{33}\psi - \tilde{y}\|_\infty &= \sup_{(i,j)\in\Lambda} \bigg|i^{-1}\sum_{i',j'}\cos(\bar{\theta}_{i',j'} - \bar{\theta}_{i,j})(\psi_{i',j'} - \psi_{i,j}) - \tilde{y}_{i,j}\bigg| \\
			&=  \sup_{(i,j)\in\Lambda} i^{-1}\bigg|\sum_{i',j'}\cos(\bar{\theta}_{i',j'} - \bar{\theta}_{i,j})(\psi_{i',j'} - \psi_{i,j}) - y'_{i,j}\bigg| \\
			&\leq \sup_{(i,j)\in\Lambda} \bigg|\sum_{i',j'}\cos(\bar{\theta}_{i',j'} - \bar{\theta}_{i,j})(\psi_{i',j'} - \psi_{i,j}) - y'_{i,j}\bigg| \\
			&< \frac{\varepsilon}{2}. 	
		\end{split}	
	\end{equation}
	Therefore, putting this all together gives
	\begin{equation}
		\|M_{33}\psi - y\|_\infty \leq \|M_{33}\psi - \tilde{y}\|_\infty + \|y - \tilde{y}\|_\infty < \frac{\varepsilon}{2} + \frac{\varepsilon}{2} = \varepsilon, 
	\end{equation}
	showing that $M_{33}$ has dense range since $\varepsilon > 0$ and $y \in c_0(\Lambda)$ were arbitrary. 
\end{proof} 

One notices from Proposition $\ref{prop:FrechetDerivative}$ that $M$ is a block lower-triangular matrix. This greatly simplifies much of our analysis and implies that many of the properties of the diagonal elements carry over to the full block matrix $M$. In particular, Lemma $\ref{lem:SPartial}$ and Corollary $\ref{lem:PsiPartial}$ lead to the following corollary regarding the density of the range of the operator $M$ in $(\ref{DerivativeMatrix})$.

\begin{cor} \label{cor:MMatrix} 
	$M: X \to X$ is injective and has dense range. 
\end{cor}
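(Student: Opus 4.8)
The plan is to exploit the block lower-triangular structure of $M$ displayed in $(\ref{DerivativeMatrix})$ and reduce the claim to the corresponding statements for the diagonal blocks, which have already been established: the $(1,1)$-block is the identity on $\mathbb{R}$, $M_{22}$ is a bounded bijection by Lemma $\ref{lem:SPartial}$, and $M_{33}$ is injective with dense range by Corollary $\ref{lem:PsiPartial}$.

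For injectivity, suppose $M(\alpha,s,\psi) = (0,0,0)$. Reading off the three components of the product of $M$ with $(\alpha,s,\psi)^T$ gives, first, $\alpha = 0$ (from the top row). Substituting $\alpha = 0$ into the second row yields $M_{21}(0) + M_{22}s = M_{22}s = 0$, and since $M_{22}$ is injective we conclude $s = 0$. Finally the third row gives $M_{32}s + M_{33}\psi = M_{33}\psi = 0$, and injectivity of $M_{33}$ forces $\psi = 0$. Hence $(\alpha,s,\psi) = (0,0,0)$ and $M$ is injective.

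For dense range, I would argue directly by approximation rather than passing to adjoints, since the block structure makes the forward argument clean. Let $(\beta, y, z) \in X = \mathbb{R}\times\ell^\infty(\Lambda)\times c_0(\Lambda)$ and $\varepsilon > 0$ be arbitrary. Choose $\alpha := \beta$, which handles the first component exactly. Next, since $M_{22}$ is surjective, choose $s \in \ell^\infty(\Lambda)$ with $M_{21}\alpha + M_{22}s = y$ exactly (solve $M_{22}s = y - M_{21}\alpha$, which is possible as $M_{22}$ is a bounded bijection). Now $M_{32}s \in c_0(\Lambda)$ is a fixed element; since $M_{33}$ has dense range there exists $\psi \in c_0(\Lambda)$ with $\|M_{33}\psi - (z - M_{32}s)\|_\infty < \varepsilon$. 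Then $M(\alpha,s,\psi) = (\beta,\, y,\, M_{32}s + M_{33}\psi)$ differs from $(\beta,y,z)$ only in the third component, and there by less than $\varepsilon$ in the $\|\cdot\|_\infty$ norm; since the norm on $X$ is the maximum of the component norms and the first two components agree exactly, $\|M(\alpha,s,\psi) - (\beta,y,z)\|_X < \varepsilon$. As $\varepsilon$ and $(\beta,y,z)$ were arbitrary, $M$ has dense range. Alternatively, one could note that $M^*$ is block upper-triangular with diagonal blocks the identity, $M_{22}^*$, and $M_{33}^*$; injectivity of each diagonal adjoint block (using $\|M_{22}\|_{op} = \|M_{22}^*\|_{op}$ and invertibility, together with Lemma $\ref{lem:Adjoint}$ applied to $M_{33}$) yields injectivity of $M^*$, and Lemma $\ref{lem:Adjoint}$ then gives dense range of $M$.

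I do not expect a serious obstacle here; the only point requiring mild care is making sure the off-diagonal blocks $M_{21}$ and $M_{32}$ are accommodated correctly — in particular that solving for $s$ can be done exactly (using surjectivity of $M_{22}$, not merely density) so that the only approximation error enters through the last coordinate, where $M_{33}$ is merely densely-ranged. Getting the order of the coordinates right (top-down, since $M$ is lower-triangular) is what makes the triangular elimination go through cleanly.
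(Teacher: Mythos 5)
Your proposal is correct and follows essentially the same route as the paper's own proof: injectivity by top-down triangular elimination using injectivity of $M_{22}$ and $M_{33}$, and dense range by matching the first component exactly, solving the second component exactly via the bounded inverse $M_{22}^{-1}$, and absorbing the only approximation error into the third component through the dense range of $M_{33}$. The adjoint-based alternative you sketch is a reasonable variant but is not needed; the main argument is the one the paper gives.
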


\begin{proof}
	{\em \underline{Injectivity:}} Let us assume that $Mx = 0$, where $x = (\alpha,s,\psi)$. This gives
	\begin{equation} \label{MatrixVectorMultiplication}
		  \begin{bmatrix}
		0 \\
		0\\
		0
		\end{bmatrix} =
		 \begin{bmatrix}
		1 & {\bf 0} & {\bf 0}\\
		M_{21} & M_{22}& {\bf 0} \\
		{\bf 0} & M_{32} & M_{33}
	\end{bmatrix} \cdot \begin{bmatrix}
		\alpha \\
		s\\
		\psi
	\end{bmatrix} = \begin{bmatrix}
		\alpha \\
		M_{21} \alpha + M_{22} s \\
		M_{32} s + M_{33}\psi
	\end{bmatrix}.	
	\end{equation}
	One can immediately see that this implies that $\alpha = 0$. This in turn yields  
	\begin{equation}
		0 = M_{21} \alpha + M_{22} s = M_{22} s, 	
	\end{equation}
	since $M_{21}0 = 0$. From Lemma $\ref{lem:SPartial}$ we have that $M_{22}$ is injective, thus giving that $s = 0$.
	
	Having $s = 0$ gives 
	\begin{equation}
		0 = M_{32} s + M_{33}\psi = M_{33}\psi,	
	\end{equation} 
	since $M_{32}0 = 0$. From Lemma $\ref{lem:PsiPartial}$ we have that $M_{33}$ is injective, thus giving that $\psi = 0$. Hence, $Mx = 0$ implies that $x = 0$, and therefore $M$ is injective.
	
	{\em \underline{Density of Range:}} Let $\varepsilon > 0$. We wish to show that for any $x = (\alpha,s,\psi) \in X$ that there exists $x_\varepsilon = (\alpha_\varepsilon,s_\varepsilon,\psi_\varepsilon) \in X$ such that 
	\begin{equation}
		\|Mx_\varepsilon - x\|_X < \varepsilon.
	\end{equation}
	Using $(\ref{MatrixVectorMultiplication})$ above we have 
	\begin{equation} \label{DenseMatrixVectorMultiplication}
		Mx_\varepsilon - x =
		\begin{bmatrix}
			\alpha_\varepsilon - \alpha \\
			M_{21} \alpha_\varepsilon + M_{22} s_\varepsilon - s \\
			M_{32} s_\varepsilon + M_{33}\psi_\varepsilon - \psi	
		\end{bmatrix}.
	\end{equation}
	Immediately one sees that we may take $\alpha_\varepsilon = \alpha$ to get that $|\alpha_\varepsilon - \alpha| = 0 < \varepsilon$. Then denoting the bounded inverse of $M_{22}$ by $M_{22}^{-1}:\ell^\infty(\Lambda) \to \ell^\infty(\Lambda)$, we will take $s_\varepsilon = M_{22}^{-1}[s - M_{21}\alpha_\varepsilon]$ to see that
	\begin{equation}
		\|M_{22} s_\varepsilon - (s - M_{21} \alpha )\|_\infty = 0 < \varepsilon.
	\end{equation}
	Then using the density of the range of $M_{33}$ we have that there exists a $\psi_\varepsilon \in c_0(\Lambda)$ such that 
	\begin{equation}
		\|M_{33} \psi_\varepsilon - (\psi - M_{32} s_\varepsilon)\|_\infty < \varepsilon,
	\end{equation}
	since $(\psi - M_{32} s_\varepsilon)\in c_0(\Lambda)$, by definition. Putting this all together gives that if $x_\varepsilon = (\alpha_\varepsilon, s_\varepsilon,\psi_\varepsilon)$ as above we obtain 
	\begin{equation}
		\begin{split}
			&\|Mx_\varepsilon - x\|_X \\ 
			&= \max\{|\alpha_\varepsilon - \alpha|,\|M_{21} \alpha_\varepsilon + M_{22} s_\varepsilon - s\|_\infty, \|M_{32} s_\varepsilon + M_{33}\psi_\varepsilon - \psi\|_\infty \}\\ 
			&< \varepsilon,
		\end{split} 
	\end{equation}
	completing the proof.
\end{proof} 

\begin{rmk}
It should be noted that the density of the range of $M:X\to X$ is all that we can conclude about the image of this operator. Following from the fact that $M_{33}$ is not necessarily surjective, we find that $M:X\to X$ is not necessarily surjective. In fact, it is easy to see that $M_{33}$ does have a nontrivial kernel when acting on $\ell^\infty(\Lambda)$, the second dual of $c_0(\Lambda)$, which in turn can be used to show that $M_{33}$ does not have a bounded inverse, but we refrain from doing so here. This in turn gives that $M:X \to X$ is not surjective, thus preventing traditional Implicit Function Theorem arguments being applied to the situation. This necessitates the application of a nonstandard Implicit Function Theorem such as Theorem $\ref{thm:CravenNashed}$ to obtain solutions to $G \in S$ for $\alpha > 0$. The great benefit of Theorem $\ref{thm:CravenNashed}$ is that it can be applied to both the situation when $M:X \to X$ is surjective and when it is not, with no change in the analysis here. 
\end{rmk}

We will now work to define an approximate right inverse. Let us fix $\mu \in (0,1)$. To begin, let $y \in X$ be an element of the image of $M$. Then there exists a unique $x \in X$ such that $Mx = y$. We will define $B_\mu y := x$. In this way one sees that 
\begin{equation}
	\|MB_\mu y - y\|_X = 0 \leq \mu \|y\|_X
\end{equation} 
for each $y$ in the image of $M$. For any $y' = (\alpha_y,s_y,\psi_y) \in X$ not in the image of $M$, we proceed in much the same way as the proof of Corollary $\ref{cor:MMatrix}$. That is, we will define $B_\mu y' = x' = (\alpha_x,s_x,\psi_x) \in X$ to be such that
\begin{equation}
	\begin{split}
		&\alpha_x := \alpha_y, \\
		&s_x := M_{22}^{-1}[s_y - M_{21}\alpha_y], 
	\end{split}
\end{equation}
and $\psi_x \in c_0(\Lambda)$ to be an element such that
\begin{equation}
	\|M_{33} \psi_x - (\psi_y - M_{32} s_x)\|_\infty \leq \mu \|y\|_X,
\end{equation}
which is guaranteed to exist since $M_{33}$ has dense range. Hence, one sees that as in the proof of Corollary $\ref{cor:MMatrix}$, we have that 
\begin{equation}
	\|Mx' - y'\|_X \leq \mu \|y'\|_X.
\end{equation} 
Therefore the mapping $B_\mu : X \to X$ satisfies the first condition for an approximate right inverse. The reader should note that our mapping $B_\mu: X \to X$ is neither linear nor injective nor unique for any $\mu \in (0,1)$ since $M:X \to X$ is not surjective. 

We now require the appropriate space to bound our approximate right inverse $B_\mu$. To begin, recall that $c_0(\Lambda)$ exhibits a Shauder basis given by the canonical basis, $\{\delta_{i,j}\}_{(i,j)\in\Lambda} \subset c_0(\Lambda)$, where the element $\delta_{i,j}$ has zeros at every index but for a $1$ at the index $(i,j)$. Then writing
\begin{equation}
	\delta^\psi_{i,j} = (0,0,\delta_{i,j})\in X,
\end{equation}
we can uniquely write any $x = (0, 0,\psi)\in X$ as 
\begin{equation}
	x = \sum_{(i,j)\in\Lambda} \psi_{i,j}\delta^\psi_{i,j}.
\end{equation}
For any $n \geq 1$, consider the finite-dimensional closed subspace of $X$ given by
\begin{equation}
	E_n = \{(0,0,\psi) \in X|\ \psi_{i,j} = 0\ \forall i > n\}.
\end{equation}
One can see that for each $n \geq 1$ the subspace $E_n$ is merely those elements of $X$ in which $\alpha = 0$, $s = 0$ and the only nonzero components of $\psi = \{\psi_{i,j}\}_{(i,j)\in\Lambda}$ belong to the first $n$ columns of $\Lambda$. Furthermore, the set of vectors in $X$ given by 
\begin{equation} \label{BasisOfEn}
	B_n := \{\delta^\psi_{i,j}\}_{i\leq n} 
\end{equation}
forms a basis of $E_n$, for each $n \geq 1$.

\begin{lem} \label{lem:FiniteInverse} 
	For each $n \geq 1$, $M: E_n \to X$ has a bounded inverse from its range to $E_n$.	
\end{lem}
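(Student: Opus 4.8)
The plan is to exploit the finite-dimensionality of $E_n$ together with the block lower-triangular structure of $M$. Since $E_n$ consists entirely of elements of the form $(0,0,\psi)$ with $\psi$ supported on the first $n$ columns of $\Lambda$, the action of $M$ on $E_n$ reduces to the action of $M_{33}$ on such finitely-supported $\psi$, together with the image $M_{32}(0)=0$ in the $s$-slot — in other words, $M$ restricted to $E_n$ is just $x=(0,0,\psi)\mapsto(0,0,M_{33}\psi)$. So it suffices to show that the restriction of $M_{33}$ to the finite-dimensional space of sequences supported on the first $n$ columns is injective; a linear injection from a finite-dimensional space into a Banach space automatically has a bounded inverse on its range (all norms on a finite-dimensional space are equivalent, and the inverse is a linear map on a finite-dimensional domain, hence continuous).

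First I would make the reduction above precise: observe that $B_n$ in $(\ref{BasisOfEn})$ is a finite basis of $E_n$, so $E_n$ is finite-dimensional, and that $M|_{E_n}$ has range contained in $\{0\}\times\{0\}\times c_0(\Lambda)$ and is determined by $M_{33}$. Next I would invoke the injectivity of $M_{33}$ already established in Corollary $\ref{lem:PsiPartial}$: since $M_{33}$ is injective on all of $c_0(\Lambda)$, it is a fortiori injective on the finite-dimensional subspace spanned by $B_n$, hence $M|_{E_n}:E_n\to X$ is injective. Then I would appeal to the standard fact that a bounded linear injection defined on a finite-dimensional normed space has a bounded inverse on its range: concretely, the unit sphere of $E_n$ is compact, $x\mapsto\|Mx\|_X$ is continuous and strictly positive on it, so it attains a positive minimum $c_n>0$, giving $\|Mx\|_X\geq c_n\|x\|_X$ for all $x\in E_n$, which is exactly the statement that $(M|_{E_n})^{-1}$ is bounded by $c_n^{-1}$ on the range.

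I do not anticipate a serious obstacle here; the only point requiring a little care is making explicit that $M$ maps $E_n$ into $X$ (not out of it) and identifying its action with that of $M_{33}$ on the relevant finite block — this is immediate from the block form $(\ref{DerivativeMatrix})$ since the $\alpha$- and $s$-components of any element of $E_n$ vanish. The compactness argument on the unit sphere of $E_n$ is the crux, but it is entirely standard. One should note that the bound $c_n^{-1}$ will in general depend on $n$ and may blow up as $n\to\infty$ — this is expected and is precisely why the construction of the approximate right inverses $B_\mu$, and the accompanying family of norms $\|\cdot\|_\mu$, is needed rather than a single global bounded inverse.
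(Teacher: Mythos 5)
Your proposal is correct and follows essentially the same route as the paper: finite-dimensionality of $E_n$ plus injectivity of $M$ on it yields a bounded inverse on the range. The paper simply cites injectivity of the full operator $M$ from Corollary~\ref{cor:MMatrix} and the fact that linear maps between finite-dimensional spaces (via Rank--Nullity) are automatically boundedly invertible when injective, whereas you obtain the same injectivity through the block structure and $M_{33}$ (Corollary~\ref{lem:PsiPartial}) and make the bound explicit with the compactness argument on the unit sphere; these are cosmetic, not substantive, differences.
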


\begin{proof}
	Fix $n \geq 1$ and finite. Then $E_n$ is a finite-dimensional subspace and the image of the space under the mapping $M$, denoted $M(E_n)$, is a subspace of $X$. From the Rank-Nullity Theorem one has that $M(E_n)$ is finite-dimensional since $E_n$ is finite-dimensional. Hence, $M:E_n \to M(E_n)$ is an operator acting between finite-dimensional spaces. Since invertibility is equivalent to injectivity in finite-dimensions, $M:E_n \to M(E_n)$ has a bounded inverse since we have shown in Corollary $\ref{cor:MMatrix}$ that it is injective. 
\end{proof} 

Denoting $M(E_n)$ to be the image of $E_n$ under the action of $M$, we find that $M(E_n)$ is a finite-dimensional subspace of $X$ and therefore is closed. Moreover, since $M:E_n \to M(E_n)$ is invertible, it follows that the set $M(B_n) = \{M\delta^\psi_{i,j}\}_{i\leq n}$ is a linearly independent spanning set of $M(E_n)$. A well-known consequence of the Hahn-Banach theorem is that for each element of the spanning set $M(B_n)$, say $v$, there exists a linear functional $\phi_v \in X^*$ such that $\phi_v(v) = 1$ and $\phi_v(v') = 0$ for all $v' \in M(B_n)\setminus\{v\}$. This gives a natural continuous projection, $P_{M(E_n)}: X \to M(E_n)$ by 
\begin{equation} \label{ImageProjectionDefinition}
	P_{M(E_n)}x = \sum_{i\leq n} \phi^\psi_{i,j}(x)M\delta^\psi_{i,j},		
\end{equation}  
where $\phi^\psi_{i,j} \in X^*$ is the linear functional corresponding as above to the basis element $M\delta^\psi_{i,j} \in M(B_n)$. Linearity and continuity immediately follow from the fact that the elements of $X^*$ used to define $P_{M(E_n)}$ are linear and continuous.  

Now, since $P_{M(E_n)}: X \to M(E_n)$ is continuous for all $n\geq 1$, this implies that we may write $X = M(E_n) \oplus V_n$, where $V_n$ is the kernel of $P_M(E_n)$, which is closed. Let us denote $M(X)$ to be the image of $X$ under the action of $M:X \to X$. Consider the linear subspaces of $X$:
\begin{equation}
	U_n := M^{-1}(V_n \cap M(X)).
\end{equation}
That is, $U_n$ is the preimage of the complement of $M(E_n)$ in $M(X)$.

\begin{claim} \label{Claim1}
	For each $n \geq 1$ we have $E_n \cap U_n = \{0\}$.
\end{claim}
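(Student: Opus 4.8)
The plan is to show that the only element simultaneously in $E_n$ and in $U_n$ is the zero element, by exploiting the injectivity of $M$ established in Corollary $\ref{cor:MMatrix}$ together with the defining property of the projection $P_{M(E_n)}$. First I would take an arbitrary $x \in E_n \cap U_n$ and unpack both membership conditions. Since $x \in E_n$, we may write $x = \sum_{i\leq n} \psi_{i,j}\delta^\psi_{i,j}$ for some coefficients, and applying $M$ gives $Mx = \sum_{i\leq n}\psi_{i,j} M\delta^\psi_{i,j} \in M(E_n)$. In particular $P_{M(E_n)}(Mx) = Mx$, because $P_{M(E_n)}$ acts as the identity on $M(E_n)$: indeed, $P_{M(E_n)}\big(M\delta^\psi_{i,j}\big) = \sum_{k\leq n}\phi^\psi_{k,l}(M\delta^\psi_{i,j})M\delta^\psi_{k,l} = M\delta^\psi_{i,j}$ by the biorthogonality $\phi^\psi_{k,l}(M\delta^\psi_{i,j}) = \delta_{(i,j),(k,l)}$, and then one extends by linearity.

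On the other hand, $x \in U_n = M^{-1}(V_n \cap M(X))$ means $Mx \in V_n = \ker P_{M(E_n)}$, so $P_{M(E_n)}(Mx) = 0$. Combining the two computations yields $Mx = 0$. By the injectivity of $M:X\to X$ from Corollary $\ref{cor:MMatrix}$, this forces $x = 0$. Hence $E_n \cap U_n = \{0\}$, as claimed.

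I expect this argument to be essentially routine; the main thing to get right is the bookkeeping that $P_{M(E_n)}$ restricts to the identity on $M(E_n)$, which relies precisely on the Hahn--Banach-produced functionals $\phi^\psi_{i,j}$ satisfying $\phi^\psi_{i,j}(M\delta^\psi_{k,l}) = 1$ when $(i,j)=(k,l)$ and $0$ otherwise, together with the fact that $\{M\delta^\psi_{i,j}\}_{i\leq n}$ is a genuine basis of $M(E_n)$ (which was noted just before the claim, using that $M:E_n\to M(E_n)$ is invertible). There is no real analytic obstacle here: everything reduces to injectivity of $M$ plus the algebra of a finite-rank projection, and the infinite-dimensionality of $X$ plays no role in this particular step.
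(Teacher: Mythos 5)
Your argument is correct and follows essentially the same route as the paper: both show $Mx \in M(E_n)$ and $Mx \in V_n$, conclude $Mx = 0$ from the splitting $X = M(E_n)\oplus V_n$, and then invoke the injectivity of $M$ from Corollary \ref{cor:MMatrix}. The only difference is cosmetic: you spell out, via the biorthogonal functionals $\phi^\psi_{i,j}$, why $P_{M(E_n)}$ is the identity on $M(E_n)$ (hence $M(E_n)\cap V_n = \{0\}$), a fact the paper simply cites as holding by definition of the direct sum.
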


\begin{proof}
	Let us fix $n \geq 1$ and assume that $x \in E_n \cap U_n$. Then, since $x\in E_n$ we get that $Mx \in M(E_n)$. Similarly, since $x\in U_n$, this implies that $Mx \in V_n \cap M(X) \subseteq V_n$. So, $Mx \in M(E_n) \cap V_n$. But $M(E_n) \cap V_n = \{0\}$ by definition, giving that $Mx = 0$. Injectivity of $M$ implies that $x = 0$. 
\end{proof}

\begin{claim} \label{Claim2}
	$U_n$ is closed as a subspace of $X$ for every $n \geq 1$.
\end{claim}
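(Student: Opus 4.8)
The plan is to reduce $U_n$ to the preimage of a closed set under a continuous map, which is automatically closed. The key observation is that the intersection with $M(X)$ appearing in the definition of $U_n$ is redundant once one takes the preimage under $M$: for every $x \in X$ one trivially has $Mx \in M(X)$, so the condition ``$Mx \in V_n \cap M(X)$'' is equivalent to ``$Mx \in V_n$''. Hence
\begin{equation}
	U_n = M^{-1}(V_n \cap M(X)) = \{x \in X : Mx \in V_n\} = M^{-1}(V_n).
\end{equation}
This is the step I would flag as the only one requiring any care, although it is mild: the temptation is to treat $V_n \cap M(X)$ as a subset of $X$ that need not be closed (it is closed only relative to $M(X)$, and $M(X)$ itself is dense but not closed in $X$), whereas the point is simply that the $M(X)$ factor disappears under $M^{-1}$.

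Having made that identification, I would invoke facts already in hand. First, $V_n$ is closed in $X$: it was recorded in the discussion preceding the claim that $V_n = \ker P_{M(E_n)}$, where $P_{M(E_n)} : X \to M(E_n)$ is the continuous projection constructed from the Hahn--Banach functionals $\phi^\psi_{i,j} \in X^*$, and the kernel of a continuous linear map is closed. Second, $M : X \to X$ is a bounded, hence continuous, linear operator by Proposition \ref{prop:FrechetDerivative}. Therefore $U_n = M^{-1}(V_n)$ is the preimage of a closed set under a continuous map and so is closed. That $U_n$ is in fact a linear subspace follows at once from the linearity of $M$ together with the fact that $V_n$ is a linear subspace, as was already noted when $U_n$ was introduced.

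In short, there is no substantive obstacle here once the redundancy of the $M(X)$ intersection is recognized; the remainder is the elementary fact that continuous linear preimages of closed subspaces are closed subspaces. I would present the argument in essentially the three lines above.
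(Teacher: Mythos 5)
Your proof is correct and rests on the same two facts the paper uses: closedness of $V_n$ (as the kernel of the continuous projection $P_{M(E_n)}$) and boundedness of $M$, the paper merely presenting the argument in sequential form (taking $x_k \to x$ in $U_n$ and pushing forward with $M$) where you phrase it as the preimage of a closed set under a continuous map, with the intersection with $M(X)$ handled automatically in both versions. So this is essentially the paper's proof, stated slightly more succinctly.
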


\begin{proof}
	Fix $n \geq 1$ and let $\{x_k\}_{k=1}^\infty \subset U_n$ be a sequence converging in the norm of $X$ to some $x \in X$. We wish to show that $x \in U_n$. 
	
	For each $k\geq 1$, let us define $y_k := Mx_k$. By definition we have that $y_k \in V_n \cap M(X) \subset V_n$. Let us further define $y := Mx$ to see that
	\begin{equation}
		\|y_k - y\|_X = \|Mx_k - Mx\|_X \leq \|M\|_{op}\|x_k - x\|_X, 
	\end{equation}  	
	where $ \|M\|_{op}<\infty$ is the operator norm of $M:X \to X$. Then since $\|x_k - x\|_X \to 0$ as $k \to \infty$ it follows that $y_k \to y$ as $k \to \infty$ in the norm of $X$. Since $V_n$ is closed, we have that $y \in V_n$. Furthermore, $y = Mx \in M(X)$, so we see that $y \in V_n \cap M(X)$. Therefore, $x = M^{-1}y \in M^{-1}(V_n \cap M(X)) = U_n$, giving that $U_n$ is closed.  
\end{proof}

\begin{cor} \label{cor:XSplitting}
	For each $n\geq 1$ we have $X = E_n \oplus U_n$.
\end{cor}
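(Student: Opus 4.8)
The plan is to combine the splitting $X = M(E_n)\oplus V_n$ furnished by the bounded projection $P_{M(E_n)}$ with the injectivity of $M$ (Corollary \ref{cor:MMatrix}) and the invertibility of the restriction $M|_{E_n}:E_n\to M(E_n)$ from Lemma \ref{lem:FiniteInverse}. Since Claim \ref{Claim1} already gives $E_n\cap U_n=\{0\}$, the only thing left to prove is that $X=E_n+U_n$, i.e.\ that every $x\in X$ decomposes as the sum of an element of $E_n$ and an element of $U_n$.

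So I would fix $x\in X$ and look at $Mx\in M(X)\subseteq X$. Using the direct sum decomposition $X=M(E_n)\oplus V_n$ I write $Mx = P_{M(E_n)}(Mx) + \bigl(Mx-P_{M(E_n)}(Mx)\bigr)$, where the first summand lies in $M(E_n)$ and the second in $V_n=\ker P_{M(E_n)}$. Because $M:E_n\to M(E_n)$ is a bijection (Lemma \ref{lem:FiniteInverse}), there is a unique $e\in E_n$ with $Me=P_{M(E_n)}(Mx)$.

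Next I would verify that $x-e\in U_n$. Indeed $M(x-e)=Mx-Me = Mx-P_{M(E_n)}(Mx)\in V_n$, and trivially $M(x-e)\in M(X)$, hence $M(x-e)\in V_n\cap M(X)$, which is precisely the condition $x-e\in M^{-1}(V_n\cap M(X))=U_n$. Therefore $x=e+(x-e)\in E_n+U_n$, and since $x$ was arbitrary, $X=E_n+U_n$. Combined with Claim \ref{Claim1} this gives the algebraic direct sum $X=E_n\oplus U_n$; and since $E_n$ is finite-dimensional (hence closed) and $U_n$ is closed by Claim \ref{Claim2}, the associated idempotent projections have closed range and closed kernel and are therefore bounded, so the decomposition is topological as well.

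The only delicate point — and the sole place where the non-surjectivity of $M$ enters — is making sure the $V_n$-component of $Mx$ actually lies in $M(X)$, so that it can be pulled back through $M$. This is immediate here since that component equals $Mx-Me$ with both terms in $M(X)$; everything else is bookkeeping with the finite-dimensional decomposition, so I do not expect a genuine obstacle.
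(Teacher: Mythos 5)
Your argument is correct and follows essentially the same route as the paper: decompose $Mx$ via $X = M(E_n)\oplus V_n$, pull the $M(E_n)$-part back through the invertible restriction $M:E_n\to M(E_n)$, and recognize the remainder as an element of $U_n$. The only (harmless) difference is that you conclude $x-e\in U_n$ directly from the preimage definition of $U_n$, whereas the paper picks some $x_U\in U_n$ with $Mx_U = y_V$ and then invokes injectivity of $M$ to identify $x = x_E + x_U$; your shortcut is fine.
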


\begin{proof}
	From Claims $\ref{Claim1}$ and $\ref{Claim2}$, we have that for every $n \geq 1$, the direct sum $E_n \oplus U_n$ is well-defined since both components are closed and mutually disjoint as subspaces of $X$. Furthermore, $E_n \oplus U_n \subseteq X$ by definition. It therefore only remains to show that $X \subseteq E_n \oplus U_n$. 
	
	Now fix $n \geq 1$ and take $x\in X$. Then $Mx \in M(X) \subset X = M(E_n) \oplus V_n$. Hence, there exists $y_E \in M(E_n)$ and $y_V \in V_n$ such that $Mx = y_E + y_V$. Now since $y_E \in M(E_n)$ and $M: E_n \to M(E_n)$ is invertible, there exists an unique $x_E \in E_n$ such that $Mx_E = y_E$. So,
	\begin{equation}
		M(x - x_E) = Mx - y_E = y_V,
	\end{equation} 	
	and therefore $y_V$ belongs to $M(X)$. Hence, $y_V \in V_n \cap M(X)$, which implies there exists an element $x_U \in U_n$ such that $Mx_U = y_V$. Putting this all together shows that 
	\begin{equation}
		M(x - x_E - x_U) = y - y_E - y_V = 0.
	\end{equation}
	From injectivity of $M$ we get that $x = x_E + x_U \in E_n \oplus U_n$, completing the proof. 
\end{proof}

Following the result of Corollary $\ref{cor:XSplitting}$ we see that for every $n \geq 1$ we have two decompositions of $X$ as the direct sum of closed subspaces: $E_n \oplus U_n$ and $M(E_n) \oplus V_n$. Moreover, one sees that $M(U_n) \subset V_n$, showing that $M$ preserves this splitting. Now, since $X = E_n \oplus U_n$, there exists a continuous projection $P_n: X \to E_n$ which acts on every $x = x_E + x_U \in E_n \oplus U_n$ by $P_n x = x_E$. By definition we have that the range of $P_n$ is given by $E_n$ and its kernel is exactly $U_n$.

\begin{claim} \label{Claim3} 
	For every $x\in X$ and $n \geq 1$, $MP_n x = P_{M(E_n)}Mx$.
\end{claim}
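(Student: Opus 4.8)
The plan is to verify the identity $MP_n x = P_{M(E_n)}Mx$ by decomposing an arbitrary $x \in X$ according to the splitting $X = E_n \oplus U_n$ furnished by Corollary~\ref{cor:XSplitting}, and then tracking where each summand lands under the two composite maps. Write $x = x_E + x_U$ with $x_E \in E_n$ and $x_U \in U_n$, so that by definition of $P_n$ we have $P_n x = x_E$ and hence $MP_n x = Mx_E \in M(E_n)$. On the other hand, since $M$ preserves the splitting in the sense that $M(E_n) \subset M(E_n)$ trivially and $M(U_n) \subset V_n$ (this inclusion was noted in the paragraph preceding the claim, and follows at once from $U_n = M^{-1}(V_n \cap M(X))$), we get the decomposition $Mx = Mx_E + Mx_U$ with $Mx_E \in M(E_n)$ and $Mx_U \in V_n$. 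This is precisely the decomposition of $Mx$ relative to $X = M(E_n) \oplus V_n$, and therefore by definition of the projection $P_{M(E_n)}$ onto $M(E_n)$ along $V_n$ we have $P_{M(E_n)}Mx = Mx_E$.

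Combining the two computations gives $MP_n x = Mx_E = P_{M(E_n)}Mx$, which is the desired identity, valid for every $x \in X$ and every $n \geq 1$.

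The only genuine subtlety is to be sure that $P_{M(E_n)}$ really is the projection onto $M(E_n)$ along the complement $V_n$ — that is, that the continuous projection constructed via the Hahn–Banach functionals in~\eqref{ImageProjectionDefinition} has kernel exactly $V_n$; but this is exactly how $V_n$ was defined (as the kernel of $P_{M(E_n)}$), so the decomposition $Mx = Mx_E + Mx_U$ is the unique decomposition that $P_{M(E_n)}$ reads off. Thus the argument is essentially a bookkeeping exercise in how $M$ intertwines the two direct-sum decompositions of $X$, and there is no real obstacle once Corollary~\ref{cor:XSplitting} and the inclusion $M(U_n) \subset V_n$ are in hand.
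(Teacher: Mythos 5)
Your proposal is correct and is essentially the paper's own argument: the paper likewise writes $x = P_n x + (I-P_n)x$ with $P_n x \in E_n$ and $(I-P_n)x \in U_n$, applies $M$ so that $MP_n x \in M(E_n)$ and $M(I-P_n)x \in V_n$, and concludes via the fact that $V_n$ is the kernel of $P_{M(E_n)}$. Your remark that $P_{M(E_n)}$ acts as the identity on $M(E_n)$ (so it is the projection along $V_n$) is the same observation the paper uses implicitly, so there is nothing substantive to add.
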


\begin{proof}
	Let $x \in X$ and $n \geq 1$. Then we may write 
	\begin{equation}
		x = P_nx + (I-P_n)x,
	\end{equation}
	where $I:X\to X$ is the identity operator on $X$, so that $P_nx \in E_n$ and $(I - P_n)x \in U_n$. Applying $M$ to this equation gives 
	\begin{equation}
		Mx = MP_nx + M(I - P_n)x,
	\end{equation}
	so that by definition we have $MP_nx \in M(E_n)$ and $M(I - P_n)x \in V_n$. Applying $P_{M(E_n)}$ now gives that
	\begin{equation}
		P_{M(E_n)}Mx = MP_nx, 	
	\end{equation} 
	since $V_n$ is the kernel of $P_{M(E_n)}$ by definition. This completes the proof of the claim. 
\end{proof}

Claim $\ref{Claim3}$ gives the following commutative diagram:

\begin{center}
\large
$\begin{array}[c]{ccc}
E_n\oplus U_n&\xrightarrow{\ \ M\ \ }&M(E_n)\oplus V_n\\
\bigg\downarrow\scriptstyle{P_n}&&\bigg\downarrow\scriptstyle{P_{M(E_n)}}\\
E_n \ \ &\xrightarrow{\ \ M\ \ }&M(E_n)
\end{array}$
\end{center}

Let us denote $M_n^{-1}:M(E_n) \to E_n$ the bounded inverse of $M:E_n \to M(E_n)$ for each $n \geq 1$. Moreover, let us write $C(n) := \|M_n^{-1}\|_{op}$, which is finite for each finite $n \geq 1$. One can see that $C(n)$ is an increasing function of $n$ such that $C(n) \to \infty$ as $n \to \infty$. This follows from the fact that $E_n \subseteq E_{n'}$ for each $1 \leq n \leq n'$ and that as $n \to \infty$ we are approaching an inverse of $M_{33}$, which cannot be bounded since $M:X \to X$ is not surjective.

\begin{lem} \label{lem:EProjection}
	For each $n \geq 1$ and $x = (\alpha,s,\psi)\in X$, we have that
	\begin{equation}
		P_n x = \sum_{i \leq n} \psi_{i,j}\delta^\psi_{i,j}.
	\end{equation}
\end{lem}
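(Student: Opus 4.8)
The plan is to guess the $E_n\oplus U_n$ decomposition of an arbitrary $x=(\alpha,s,\psi)\in X$ and then check it against the definitions. Write $\Pi_n x:=\sum_{i\leq n}\psi_{i,j}\delta^\psi_{i,j}$; this element manifestly lies in $E_n$, so since $X=E_n\oplus U_n$ by Corollary \ref{cor:XSplitting} (a direct sum, hence with unique decompositions) and $P_n$ is by definition the projection onto $E_n$ along $U_n$, it suffices to show $x-\Pi_n x\in U_n$. I would first record the bookkeeping facts that $\ker\Pi_n=\{(\alpha,s,\psi)\in X:\psi_{i,j}=0\text{ for all }i\leq n\}$ is a closed subspace with $X=E_n\oplus\ker\Pi_n$, and that trivially $x-\Pi_n x\in\ker\Pi_n$.

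Next, unwinding $U_n=M^{-1}(V_n\cap M(X))$: since $M(x-\Pi_n x)\in M(X)$ automatically, the only thing to verify is $M(x-\Pi_n x)\in V_n=\ker P_{M(E_n)}$. Because $M\Pi_n x=\sum_{i\leq n}\psi_{i,j}M\delta^\psi_{i,j}\in M(E_n)$ and $P_{M(E_n)}$ restricts to the identity on $M(E_n)$, linearity reduces this to the identity $P_{M(E_n)}Mx=M\Pi_n x$; invoking the explicit formula $(\ref{ImageProjectionDefinition})$ and the linear independence of $\{M\delta^\psi_{i,j}\}_{i\leq n}$, it reduces further to the scalar identities $\phi^\psi_{i,j}(Mx)=\psi_{i,j}$ for each $i\leq n$. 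Splitting $\psi=\tilde\psi+\psi'$ into its columns-$\leq n$ and columns-$>n$ parts and using the block form $(\ref{DerivativeMatrix})$, one gets $Mx=M\Pi_n x+M(\alpha,s,\psi')$ with $M\Pi_n x\in M(E_n)$, so biorthogonality of $\phi^\psi_{i,j}$ to the basis $M(B_n)$ of $M(E_n)$ gives $\phi^\psi_{i,j}(Mx)=\psi_{i,j}+\phi^\psi_{i,j}\bigl(M(\alpha,s,\psi')\bigr)$; everything thus comes down to $\phi^\psi_{i,j}\bigl(M(\alpha,s,\psi')\bigr)=0$.

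This last point is the only real obstacle, and it genuinely depends on which closed complement $V_n$ of the finite-dimensional space $M(E_n)$ is used to build $P_{M(E_n)}$, since the Hahn--Banach functionals $\phi^\psi_{i,j}$ are pinned down only on $\operatorname{span}M(B_n)=M(E_n)$. The resolution is to observe that $M(X)=M(E_n)\oplus M(\ker\Pi_n)$ — immediate from injectivity of $M$ and $X=E_n\oplus\ker\Pi_n$ — and accordingly to take the ``natural'' extension of the coordinate functionals, i.e.\ the one annihilating $M(\ker\Pi_n)$, equivalently the one whose kernel $V_n$ contains $M(\ker\Pi_n)$. With that choice $M(\alpha,s,\psi')\in M(\ker\Pi_n)\subseteq V_n$, whence $\phi^\psi_{i,j}(Mx)=\psi_{i,j}$, so $M(x-\Pi_n x)\in V_n\cap M(X)$, so $x-\Pi_n x\in U_n$, and uniqueness of the $E_n\oplus U_n$ decomposition finishes the argument. (Equivalently, once $P_{M(E_n)}Mx=M\Pi_n x$ is in hand one may instead feed it into Claim \ref{Claim3}, $MP_n x=P_{M(E_n)}Mx=M\Pi_n x$, and cancel $M$ by injectivity.) The block-triangular structure of $M$ plays no essential role here beyond making the truncation $\Pi_n$ explicit.
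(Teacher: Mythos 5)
Your reduction of the lemma to the scalar identities $\phi^\psi_{i,j}(Mx)=\psi_{i,j}$ is accurate, and it lands exactly where the paper's own proof lands (the paper computes $P_nx=\sum_{i\le n}\phi^\psi_{i,j}(Mx)\delta^\psi_{i,j}$ via Claim \ref{Claim3} and $M_n^{-1}$, and then simply asserts that these coefficients equal $\psi_{i,j}$). You are also right that biorthogonality alone does not give this, since the Hahn--Banach functionals are only pinned down on $M(E_n)$. The genuine gap is in your proposed repair: you assume one may ``take the natural extension,'' i.e.\ choose $\phi^\psi_{i,j}\in X^*$ vanishing on $M(\ker\Pi_n)$, where $\Pi_nx=\sum_{i\le n}\psi_{i,j}\delta^\psi_{i,j}$ is your truncation. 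No such \emph{bounded} functional exists, and boundedness is not optional here: the construction requires $\phi_v\in X^*$ so that $P_{M(E_n)}$ is continuous (this is what makes $V_n$ closed, and $\|P_{M(E_n)}\|_{op}<\infty$ is used again in Lemma \ref{lem:InverseBounds}). The algebraic splitting $M(X)=M(E_n)\oplus M(\ker\Pi_n)$ you invoke is correct but is not topological. Concretely, let $\psi^{(k)}\in c_0(\Lambda)$ depend only on the column index: equal to $1$ on columns $i\le n$, decreasing linearly to $0$ over the next $k$ columns, and $0$ thereafter (each column of $\Lambda$ is finite, so $\psi^{(k)}$ has finite support). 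Since every connection of the reduced system, including the boundary identifications $(\ref{ReducedBCs})$, joins cells in the same or adjacent columns (see Figure \ref{fig:ReducedGraph}), and $|\cos|\le 1$, $i^{-1}\le 1$, one gets
\begin{equation}
\|M(0,0,\psi^{(k)})\|_X=\|M_{33}\psi^{(k)}\|_\infty\le \frac{4}{k}\to 0,
\end{equation}
while $\Pi_n(0,0,\psi^{(k)})=\sum_{i\le n}\delta^\psi_{i,j}$ is a fixed nonzero element of $E_n$. If $\phi^\psi_{1,1}\in X^*$ were biorthogonal to $M(B_n)$ and vanished on $M(\ker\Pi_n)$, then $\phi^\psi_{1,1}\bigl(M(0,0,\psi^{(k)})\bigr)=\phi^\psi_{1,1}\bigl(M\Pi_n(0,0,\psi^{(k)})\bigr)=1$ for every $k$, contradicting $\bigl|\phi^\psi_{1,1}\bigl(M(0,0,\psi^{(k)})\bigr)\bigr|\le\|\phi^\psi_{1,1}\|_{op}\,(4/k)$. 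So the extension your argument needs is necessarily unbounded, and the proof does not close.

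The same sequences show that the obstruction is not an artifact of your route: combined with the valid identity $P_nx=\sum_{i\le n}\phi^\psi_{i,j}(Mx)\delta^\psi_{i,j}$, they show the coordinate functional $x\mapsto\psi_{i,j}$ cannot factor boundedly through $M$, so the asserted formula for $P_nx$ cannot hold for every $x\in X$ for \emph{any} admissible (bounded) choice of the $\phi^\psi_{i,j}$, no matter how the extensions are selected. In short, you correctly isolated the step that the paper's proof leaves unjustified (``it follows that $\phi^\psi_{i,j}(y)=\psi_{i,j}$''), but the proposed fix of selecting a particular Hahn--Banach extension cannot supply it; repairing the lemma would require changing the construction of $P_{M(E_n)}$, $V_n$, $U_n$ (or the norm $\|\cdot\|_n$ built from $P_n$), not merely choosing extensions more carefully.
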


\begin{proof}
	Fix $n \geq 1$ and consider some $x = (\alpha,s,\psi)\in X$. Then $y = Mx \in M(X) \subset X$. Using $(\ref{ImageProjectionDefinition})$ gives 
	\begin{equation}
		P_{M(E_n)}y = \sum_{i\leq n} \phi^\psi_{i,j}(y)M\delta^\psi_{i,j}. 
	\end{equation}  
	Since $P_{M(E_n)}y \in M(E_n)$ we may apply $M_n^{-1}$ to see that 
	\begin{equation}
		M_n^{-1}P_{M(E_n)}y = M_n^{-1}MP_n x = P_n x,
	\end{equation}
	where we have applied the identity from Claim $\ref{Claim3}$. Then linearity of $M_n^{-1}$ gives that
	\begin{equation}
		P_n x = M_n^{-1}P_{M(E_n)}y = \sum_{i\leq n} \phi^\psi_{i,j}(y)\delta^\psi_{i,j}.
	\end{equation} 
	Since the set $B_n$ is a linearly independent spanning set of $E_n$, it follows that
	\begin{equation}
		\phi^\psi_{i,j}(y) = \psi_{i,j}		
	\end{equation}
	for all $(i,j)$. This proves the lemma. 
\end{proof}

We are now in a position to define an appropriate norm to bound our approximate right inverse. 

\begin{prop} \label{prop:NewNorm} 
	Let $n \geq 1$ and define the function $\|\cdot \|_n : X \to [0,\infty)$ by
	\begin{equation} \label{nNorm}
		\|x\|_n = \|(\alpha,s,\psi)\|_n := \max\bigg\{|\alpha|, \|s\|_\infty, \|P_n x\|_X, \frac{1}{\|M\|_{op}}\|Mx\|_X\bigg\}
	\end{equation}
	for all $x\in X$, and $\|M\|_{op}$ denotes the operator norm of $M:X\to X$. Then $\|\cdot \|_n$ defines a norm on $X$. Moreover, for each $x \in X$ we have that $\|x\|_n \leq \|x\|_{n'} \leq \|x\|_X$ for any $1\leq n \leq n'$ and $\|x\|_n \to \|x\|_X$ as $n \to \infty$.   
\end{prop}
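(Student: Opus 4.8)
The plan is to check the norm axioms directly and then extract the monotonicity and the convergence $\|x\|_n\to\|x\|_X$ from Lemma~\ref{lem:EProjection}, which pins down $P_n x$ explicitly. First I would note that $\|\cdot\|_n$ is the pointwise maximum of the four maps $x\mapsto|\alpha|$, $x\mapsto\|s\|_\infty$, $x\mapsto\|P_n x\|_X$, and $x\mapsto\frac{1}{\|M\|_{op}}\|Mx\|_X$ (the last is well defined since $\|M\|_{op}\geq 1>0$, because the first row of $M$ is $[1\ \mathbf{0}\ \mathbf{0}]$ by \eqref{DerivativeMatrix}). Each of these four maps is a seminorm: nonnegativity is immediate, absolute homogeneity uses that $\alpha\mapsto\alpha$, $P_n$, and $M$ are linear, and the triangle inequality follows from subadditivity of $|\cdot|$, $\|\cdot\|_\infty$, and $\|\cdot\|_X$. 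A finite pointwise maximum of seminorms is again a seminorm: homogeneity is clear, and for subadditivity one uses $f_k(x+y)\leq f_k(x)+f_k(y)\leq\|x\|_n+\|y\|_n$ for every index $k$ and then takes the maximum over $k$. For positive definiteness, $\|x\|_n=0$ forces in particular $\|Mx\|_X=0$, i.e. $Mx=0$, and $M$ is injective by Corollary~\ref{cor:MMatrix}, so $x=0$; hence $\|\cdot\|_n$ is a genuine norm.

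For the comparison inequalities, the crucial input is Lemma~\ref{lem:EProjection}: for $x=(\alpha,s,\psi)$ it gives $P_n x=(0,0,\psi^{(n)})$, where $\psi^{(n)}$ denotes $\psi$ truncated to the columns $i\leq n$ of $\Lambda$, so $\|P_n x\|_X=\sup_{i\leq n}|\psi_{i,j}|$. This quantity is nondecreasing in $n$ (the index sets $\{(i,j)\in\Lambda:i\leq n\}$ are nested and increasing) and bounded above by $\|\psi\|_\infty$. Since the three other terms defining $\|x\|_n$ do not depend on $n$, we immediately obtain $\|x\|_n\leq\|x\|_{n'}$ for $1\leq n\leq n'$; and since moreover $\|P_n x\|_X\leq\|\psi\|_\infty\leq\|x\|_X$ and $\frac{1}{\|M\|_{op}}\|Mx\|_X\leq\|x\|_X$, every term is dominated by $\|x\|_X=\max\{|\alpha|,\|s\|_\infty,\|\psi\|_\infty\}$, giving $\|x\|_n\leq\|x\|_X$ for all $n$.

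Finally, for $\|x\|_n\to\|x\|_X$: the sequence $\{\|x\|_n\}_n$ is nondecreasing and bounded above by $\|x\|_X$, hence converges to some $L\leq\|x\|_X$. Conversely, given $\varepsilon>0$ choose $(i_0,j_0)\in\Lambda$ with $|\psi_{i_0,j_0}|>\|\psi\|_\infty-\varepsilon$; then for all $n\geq i_0$ we have $\|x\|_n\geq\|P_n x\|_X\geq|\psi_{i_0,j_0}|>\|\psi\|_\infty-\varepsilon$, and combining this with $\|x\|_n\geq|\alpha|$ and $\|x\|_n\geq\|s\|_\infty$ forces $L\geq\max\{|\alpha|,\|s\|_\infty,\|\psi\|_\infty\}=\|x\|_X$. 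Hence $L=\|x\|_X$ and $\|x\|_n\nearrow\|x\|_X$.

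I do not expect a genuine obstacle: every step is routine once Lemma~\ref{lem:EProjection} is available. The only point requiring some care is to make sure the entire $n$-dependence of $\|x\|_n$ is concentrated in the term $\|P_n x\|_X$ — which is exactly what the explicit formula for $P_n x$ guarantees — and to appeal to injectivity of $M$ from Corollary~\ref{cor:MMatrix} (rather than any surjectivity, which is unavailable here) when verifying positive definiteness.
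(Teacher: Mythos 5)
Your proposal is correct and follows essentially the same route as the paper: the norm axioms are verified via injectivity of $M$ and linearity of $P_n$ and $M$, and both the monotonicity and the limit rest on the explicit formula $\|P_n x\|_X=\max_{(i,j)\in\Lambda,\ i\leq n}|\psi_{i,j}|$ from Lemma \ref{lem:EProjection}. The only (harmless) difference is in the last step: the paper observes that, since $\psi\in c_0(\Lambda)$, the supremum $\|\psi\|_\infty$ is actually attained at some $(i_0,j_0)$, so that $\|x\|_n=\|x\|_X$ for all $n\geq i_0$, whereas your monotone-plus-$\varepsilon$ argument yields the same convergence without using attainment.
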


\begin{rmk}
	For the duration of this section we will let $X_n$ denote the completion of $X$ with respect to the norm $\|\cdot\|_n$.
\end{rmk}

\begin{proof} 
	To see that $(\ref{nNorm})$ defines a norm is a straightforward checking of the axioms which primarily follows from the injectivity of $M$ along with the linearity of both $P_n$ and $M$. 
	
	Now to see that such an ordering of the norms holds one observes that for any $n \geq 1$ and $x = (\alpha,s,\psi) \in X$, we have from Lemma $\ref{lem:EProjection}$ that
	\begin{equation}
		\|P_n x\|_X = \max_{(i,j)\in\Lambda,\ i\leq n} |\psi_{i,j}|.	
	\end{equation} 
	Hence, one can see that $\|P_n x\|_X \leq \|P_{n'}x\|_X$ for any $1\leq n \leq n'$ and $x \in X$. This also leads to the fact that $\|P_nx\|_X \leq \|x\|_X$ for all $n \geq 1$. Hence,
	\begin{equation}
		\begin{split}
		\|x\|_n &= \max\bigg\{|\alpha|, \|s\|_\infty, \|P_n x\|_X, \frac{1}{\|M\|_{op}}\|Mx\|_X\bigg\}\\
			 &\leq  \max\bigg\{|\alpha|, \|s\|_\infty, \|P_{n'} x\|_X, \frac{1}{\|M\|_{op}}\|Mx\|_X\bigg\}\\ 
			 &= \|x\|_{n'} 
		\end{split}
	\end{equation} 
	and since $\|Mx\|_X \leq \|M\|_{op}\|x\|_X$ we clearly have that $\|x\|_n \leq \|x\|_X$.
	
	Finally, if $x=(\alpha,s,\psi) \in X$, then $\psi \in c_0(\Lambda)$, implying that there exists $(i_0,j_0)\in\Lambda$ such that $|\psi_{i_0,j_0}| = \|\psi\|_\infty$. Then for every $n \geq i_0$ we have
	\begin{equation}
		\|P_n x\|_X = \max_{(i,j)\in\Lambda,\ i\leq n} |\psi_{i,j}| = \|\psi\|_\infty,		
	\end{equation}
	thus implying that
	\begin{equation}
		\|x\|_n = \max\bigg\{|\alpha|, \|s\|_\infty, \|\psi\|_\infty, \frac{1}{\|M\|_{op}}\|Mx\|_X\bigg\} \geq \|x\|_X. 
	\end{equation}
	Hence, for every $n \geq i_0$ we have $\|x\|_n = \|x\|_X$, giving convergence as $n \to \infty$. 
\end{proof} 

\begin{lem} \label{lem:InverseBounds} 
	For every $\mu \in (0,1)$ and $n \geq 1$ there exists a bound $\Gamma(n)$, independent of $\mu$, such that $\|B_\mu y\|_n \leq \Gamma(n) \|y\|_X$ for every $y \in X$. Moreover, $\Gamma(n)$ is an increasing function of $n$ such that $\Gamma(n) \to \infty$ as $n \to \infty$. 
\end{lem}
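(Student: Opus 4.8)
The plan is to bound each of the four quantities appearing in the definition of $\|B_\mu y\|_n$ separately, using the explicit description of the approximate right inverse. Recall that for $y = (\alpha_y, s_y, \psi_y) \in X$ we have $B_\mu y = x = (\alpha_x, s_x, \psi_x)$ with $\alpha_x = \alpha_y$, $s_x = M_{22}^{-1}[s_y - M_{21}\alpha_y]$, and $\psi_x \in c_0(\Lambda)$ chosen so that $\|M_{33}\psi_x - (\psi_y - M_{32}s_x)\|_\infty \leq \mu\|y\|_X$. First I would record the two easy estimates $|\alpha_x| = |\alpha_y| \leq \|y\|_X$ and, using boundedness of $M_{22}^{-1}$ (Lemma \ref{lem:SPartial}) and of $M_{21}$, $\|s_x\|_\infty \leq \|M_{22}^{-1}\|_{op}(1 + \|M_{21}\|_{op})\|y\|_X =: k_1\|y\|_X$. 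Note that $k_1$ depends on neither $\mu$ nor $n$.

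Next I would control $\|Mx\|_X$. Writing $Mx$ out via the block form $(\ref{DerivativeMatrix})$, its first component is $\alpha_x = \alpha_y$; its second component is $M_{21}\alpha_x + M_{22}s_x = M_{21}\alpha_y + (s_y - M_{21}\alpha_y) = s_y$ exactly, by the choice of $s_x$; and its third component is $M_{32}s_x + M_{33}\psi_x$, which by the defining inequality for $\psi_x$ satisfies $\|M_{32}s_x + M_{33}\psi_x - \psi_y\|_\infty \leq \mu\|y\|_X$, hence has $\ell^\infty$-norm at most $(1+\mu)\|y\|_X < 2\|y\|_X$ since $\mu < 1$. Consequently $\|Mx\|_X \leq 2\|y\|_X$, so that $\tfrac{1}{\|M\|_{op}}\|Mx\|_X \leq \tfrac{2}{\|M\|_{op}}\|y\|_X =: k_2\|y\|_X$, again independent of $\mu$ and $n$.

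The only term carrying genuine $n$-dependence is $\|P_n x\|_X$, and here I would invoke Claim \ref{Claim3} together with Lemma \ref{lem:FiniteInverse}: since $MP_n x = P_{M(E_n)}Mx$ and $M_n^{-1}: M(E_n) \to E_n$ is the bounded inverse of $M|_{E_n}$, we get $P_n x = M_n^{-1}P_{M(E_n)}Mx$, whence $\|P_n x\|_X \leq C(n)\,\|P_{M(E_n)}\|_{op}\,\|Mx\|_X \leq 2C(n)\,\|P_{M(E_n)}\|_{op}\,\|y\|_X$, with $C(n) = \|M_n^{-1}\|_{op}$. Setting $\gamma(n) := \max\{1, k_1, k_2, 2C(n)\|P_{M(E_n)}\|_{op}\}$ then yields $\|B_\mu y\|_n \leq \gamma(n)\|y\|_X$ for all $y \in X$ and all $\mu \in (0,1)$. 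To obtain a bound that is actually monotone in $n$, I would take $\Gamma(n) := \max_{1\leq m\leq n}\gamma(m)$, which is increasing by construction and still dominates $\|B_\mu y\|_n / \|y\|_X$. Finally, since $P_{M(E_n)}$ is a nonzero idempotent (it is the identity on $M(E_n)$, which is nontrivial because $M$ is injective and $E_n \neq \{0\}$), it has operator norm at least $1$, so $\Gamma(n) \geq 2C(n)$; as observed after Lemma \ref{lem:FiniteInverse}, $C(n) \to \infty$ because a bounded limit would furnish a bounded inverse of $M_{33}$, which does not exist. Hence $\Gamma(n) \to \infty$ as $n \to \infty$.

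The main obstacle is precisely the $\|P_n x\|_X$ term: unlike the other three pieces, $\psi_x$ itself is not controlled by $\|y\|_X$ (it is only an approximate preimage of $\psi_y - M_{32}s_x$ under $M_{33}$), so one cannot bound $P_n x$ directly and must instead route through the commutative diagram of Claim \ref{Claim3} to re-express $P_n x$ via $M_n^{-1}$, $P_{M(E_n)}$ and the already-controlled quantity $Mx$; the only additional care needed is in forcing the resulting bound to be monotone and verifying its divergence, both of which are handled as above.
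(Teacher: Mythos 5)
Your proposal is correct and takes essentially the same route as the paper: you bound each of the four quantities in $\|\cdot\|_n$, handling the $\|P_n x\|_X$ term through Claim \ref{Claim3}, $M_n^{-1}$ and $P_{M(E_n)}$, and controlling everything by $\|Mx\|_X \leq (1+\mu)\|y\|_X \leq 2\|y\|_X$, which is exactly how the paper arrives at its bound $(\ref{GammaBound})$. Your two deviations --- avoiding the explicit case split between $y$ in the range of $M$ and $y$ outside it, and taking a running maximum to force monotonicity of $\Gamma(n)$ (a point the paper asserts but does not fully justify, since $\|P_{M(E_n)}\|_{op}$ need not be monotone) --- are cosmetic refinements of the same argument rather than a different proof.
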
 

\begin{proof}
	Fix $\mu \in (0,1)$ and $n \geq 1$. Let us begin by considering $y = (\alpha_y,s_y,\psi_y)\in M(X)$. Then there exists a unique $x = (\alpha_x,s_x,\psi_x)\in X$ such that $Mx = y$. Immedately from the identity stated in Claim $\ref{Claim3}$ we have that
	\begin{equation}
		MP_n x = P_{M(E_n)} y
	\end{equation}
	and since $MP_n x \in M(E_n)$, we may apply $M_n^{-1}: M(E_n) \to E_n$ to find that
	\begin{equation}
		P_n x = M_n^{-1}MP_n x = M_n^{-1}P_{M(E_n)}y. 
	\end{equation}
	Hence, 
	\begin{equation}
		\|P_n x\|_X = \|M_n^{-1}P_{M(E_n)}y\|_X \leq C(n) \|P_{M(E_n)}y\|_X \leq C(n)\cdot \|P_{M(E_n)}\|_{op} \|y\|_X,	
	\end{equation}
	where $\|P_{M(E_n)}\|_{op} < \infty$ denotes the operator norm of the bounded projection $P_{M(E_n)}: X \to M(E_n)$. 
	
	Now, since $Mx = y$, we may use the form of $M$ from Proposition $\ref{prop:FrechetDerivative}$ to find that we necessarily have
	\begin{equation}
		\begin{split}
			&\alpha_x := \alpha_y, \\
			&s_x := M_{22}^{-1}[s_y - M_{21}\alpha_y], 
		\end{split}
	\end{equation}
	where $M_{22}^{-1}:\ell^\infty(\Lambda) \to \ell^\infty(\Lambda)$ denotes the bounded inverse of $M_{22}$. Therefore, 
	\begin{equation}
		|\alpha_x| = |\alpha_y| \leq \max\{|\alpha_y|, \|s_y\|_\infty, \|\psi_y\|_\infty\} = \|y\|_X	
	\end{equation}
	and
	\begin{equation}
		\|s_x\|_\infty \leq \|M_{22}^{-1}\|_{op}[\|s_y\|_\infty + \|M_{21}\|_{op}|\alpha_y|] \leq  \|M_{22}^{-1}\|_{op}[1 + \|M_{21}\|_{op}] \|y\|_X, 
	\end{equation}
	where all operator norms are finite.
	
	Putting this all together shows that for any $y = Mx \in M(X)$ we get that $B_\mu y = x$ and
	\begin{equation}
		\begin{split}
			\|B_\mu y\|_n &= \|x\|_n \\
			&= \max\{|\alpha_x|, \|s_x\|_\infty, \|P_nx\|_X, \frac{1}{\|M\|_{op}} \|y\|_X\} \\
			&\leq \max\bigg\{1,\|M_{22}^{-1}\|_{op}[1 + \|M_{21}\|_{op}], C(n)\cdot \|P_{M(E_n)}\|_{op}, \frac{1}{\|M\|_{op}}\bigg\} \|y\|_X.
		\end{split}
	\end{equation}
		
	Now for any $y \in X\setminus M(X)$, recall that we have defined $x := B_\mu y \in X$ to be so that
	\begin{equation} \label{CloseToY}
		\|Mx - y\|_X \leq \mu \|y\|_X.	
	\end{equation}
	But since $y' := Mx$ belongs to the image of $M$, it follows that $B_\mu y' = x$ as well. Hence, from above we get
	\begin{equation}
		\|B_\mu y'\|_n \leq \max\{1,\|M_{22}^{-1}\|_{op}[1 + \|M_{21}\|_{op}],C(n)\cdot\|P_{M(E_n)}\|_{op},\frac{1}{\|M\|_{op}}\} \|y'\|_X. 
	\end{equation}
	Then rearranging $(\ref{CloseToY})$ with the reverse triangle inequality gives that
	\begin{equation}
		\|y'\|_X = \|Mx\|_X \leq (1 + \mu)\|y\|_X.
	\end{equation}
	Therefore, putting all of this together gives 
	\begin{equation}
		\begin{split}
			\|B_\mu y\|_n &= \|B_\mu y'\|_n \\
			&\leq \max\bigg\{1,\|M_{22}^{-1}\|_{op}[1 + \|M_{21}\|_{op}], C(n)\cdot \|P_{M(E_n)}\|_{op}, \frac{1}{\|M\|_{op}}\bigg\} \|y'\|_X \\
			&\leq (1 + \mu)\cdot\max\bigg\{1,\|M_{22}^{-1}\|_{op}[1 + \|M_{21}\|_{op}], C(n)\cdot \|P_{M(E_n)}\|_{op}, \frac{1}{\|M\|_{op}}\bigg\}\|y\|_X \\
			&\leq 2\cdot\max\bigg\{1,\|M_{22}^{-1}\|_{op}[1 + \|M_{21}\|_{op}], C(n)\cdot \|P_{M(E_n)}\|_{op}, \frac{1}{\|M\|_{op}}\bigg\} \|y\|_X, 	
		\end{split}
	\end{equation}
	since $\mu < 1$. 
	
	Now, we have that $1 \leq \|P_{M(E_n)}\|_{op} < \infty$ for every $n$ so that $C(n)\cdot\|P_{M(E_n)}\|_{op} \to \infty$ as $n \to \infty$ from the properties of $C(n)$. Therefore, we will define
	\begin{equation} \label{GammaBound}
		\Gamma(n) := 2\cdot\max\bigg\{1,\|M_{22}^{-1}\|_{op}[1 + \|M_{21}\|_{op}], C(n)\cdot \|P_{M(E_n)}\|_{op}, \frac{1}{\|M\|_{op}}\bigg\}  
	\end{equation}
	for each $n \geq 1$ to find that $\Gamma(n) \to \infty$ as $n \to \infty$. We also have that $\|B_\mu y\|_n \leq \Gamma(n)\|y\|_X$ for all $\mu \in (0,1)$ and $y \in X$.
\end{proof} 

Now that we have a bound on the norm of $B_\mu:X \to X$ as a function of $n$, we wish to translate this into an appropriate bound in $\mu$ to successfully apply Theorem $\ref{thm:CravenNashed}$. That is, we wish to obtain a function $n(\mu):(0,1) \to \mathbb{N}$ such that $\Gamma(n(\mu)) \leq k_0 \mu^{-\frac{1}{2}}$ for all $\mu \in (0,1)$. We obtain this through an inductive definition of the function $n(\mu)$. To begin, take $k_0 > 0$ such that
\begin{equation}
	\Gamma(1) \leq k_0.
\end{equation}
Then, let $\mu_1 \in (0,1)$ be such that 
\begin{equation}
	\Gamma(2) \leq k_0\mu_1^{-\frac{1}{2}}.
\end{equation}
One should note that this value $\mu_1$ can always be found since $k_0 \mu^{-\frac{1}{2}}$ is unbounded as $\mu \to 0^+$ and $\Gamma(n)$ is finite for every $n \geq 1$. Now we define $n(\mu) = 1$ on $(\mu_1,1)$. This clearly gives $\Gamma(n(\mu)) \leq k_0\mu^{-\frac{1}{2}}$ on $(\mu_1,1)$. Next, let $\mu_2 \in (0,\mu_1)$ such that 
\begin{equation}
	\Gamma(3) \leq k_0\mu_2^{-\frac{1}{2}}.
\end{equation}
Again, this can be found since $k_0 \mu^{-\frac{1}{2}}$ is unbounded as $\mu \to 0^+$. We define $n(\mu) = 2$ on $(\mu_2,\mu_1]$, now giving that $\Gamma(n(\mu)) \leq k_0\mu^{\frac{1}{2}}$ on $(\mu_2,1)$. 

This process continues inductively in that for any $n = d \geq 2$, we define $\mu_{d} \in (0,\mu_{d-1})$ so that
\begin{equation}
	\Gamma(d+1) \leq k_0\mu_d^{-\frac{1}{2}}.	
\end{equation}  
Then $n(\mu) = d$ on $\mu \in (\mu_d,\mu_{d-1}]$ so that $\Gamma(n(\mu)) \leq k_0\mu^{-\frac{1}{2}}$ on $(\mu_d,1)$. This process is illustrated in Figure $\ref{fig:MuSequence}$.

\begin{figure} 
	\centering
	\includegraphics[width = 7cm]{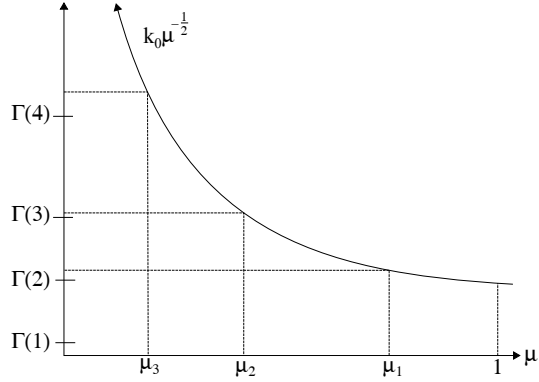}
	\caption{Visualization of how we inductively define $n(\mu)$ on $(0,1)$.}
	\label{fig:MuSequence}
\end{figure} 

It follows from the fact that $C(n) \to \infty$ as $n \to \infty$ that we have $\mu_d \to 0$ as $d \to \infty$. Hence, we have a function $n(\mu)$ which gives us that for any $\mu \in (0,1)$
\begin{equation}
	\|MB_\mu y - y\|_X \leq \mu \|y\|_X \ {\rm and}\ \|B_\mu y\|_{n(\mu)} \leq k_0\mu^{-\frac{1}{2}} \|y\|_X
\end{equation}
for every $y \in X$. Therefore, $B_\mu: X \to X$ is an approximate right inverse of $M$ for each $\mu \in (0,1)$ satisfying the hypothesis of Theorem $\ref{thm:CravenNashed}$.

The final step to applying Theorem $\ref{thm:CravenNashed}$ to our situation is determining the existence of the element $c$ such that $-[G(0,0,0) + Mc] \in S$ and $\|c\|_{n(\mu)} = 1$. One should note that by definition $X$ is a dense subset of $X_n$ for every $n \geq 1$. Then we extend $M:X \to X$ to a linear operator $\overline{M}^n : X_n \to X$ with the following lemma.

\begin{lem}
	For each $n\geq 1$, $M:X \to X$ extends to a unique bounded linear operator $\overline{M}^n : X_n \to X$ such that $\overline{M}^n|_X = M$.  
\end{lem}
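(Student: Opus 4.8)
The plan is to reduce the claim to the standard bounded-linear-transformation extension theorem: a bounded linear operator defined on a dense subspace of a normed space, with values in a Banach space, extends uniquely to a bounded linear operator on the completion, with the same operator norm. Since $(X,\|\cdot\|_X)$ is a Banach space and, by construction, $X$ sits as a dense subspace of $X_n$, the only thing that needs to be checked is that $M\colon X \to X$ is bounded when the domain is equipped with the weaker norm $\|\cdot\|_n$ rather than $\|\cdot\|_X$.

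This boundedness is immediate from the way $\|\cdot\|_n$ was built in $(\ref{nNorm})$: the last entry in that maximum is $\tfrac{1}{\|M\|_{op}}\|Mx\|_X$, so for every $x \in X$ one has $\|Mx\|_X \le \|M\|_{op}\,\|x\|_n$, i.e. $M\colon (X,\|\cdot\|_n) \to (X,\|\cdot\|_X)$ is bounded with operator norm at most $\|M\|_{op} < \infty$. I would then record the extension concretely: given $\xi \in X_n$, choose $\{x_k\} \subset X$ with $\|x_k - \xi\|_n \to 0$; the displayed bound shows $\{Mx_k\}$ is Cauchy in $(X,\|\cdot\|_X)$, hence convergent by completeness of $X$, and one sets $\overline{M}^n\xi$ equal to this limit. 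Routine arguments — interlacing two approximating sequences to see the limit is independent of the choice, passing linearity of $M$ and the norm bound to the limit, and taking constant sequences $x_k \equiv x$ for $x \in X$ — show that $\overline{M}^n$ is well defined, linear, bounded by $\|M\|_{op}$, and satisfies $\overline{M}^n|_X = M$.

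Uniqueness follows from the usual density argument: two bounded linear operators $X_n \to X$ that agree with $M$ on the dense subspace $X$ are continuous and coincide there, hence coincide on all of $X_n$. I do not expect a genuine obstacle here; the substance of the lemma is simply that the norm $\|\cdot\|_n$ was engineered (via its $\|Mx\|_X$ term) so that $M$ becomes $\|\cdot\|_n$-bounded. The two points worth flagging are that the argument relies on completeness of the target $(X,\|\cdot\|_X)$, which holds since $X$ is a Banach space, and that $\|\cdot\|_n$ is genuinely a norm, which was verified in Proposition $\ref{prop:NewNorm}$ so that the completion $X_n$ is meaningful.
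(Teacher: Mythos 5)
Your proposal is correct and follows the same route as the paper: the key observation in both is that the term $\tfrac{1}{\|M\|_{op}}\|Mx\|_X$ built into the norm $(\ref{nNorm})$ gives $\|Mx\|_X \leq \|M\|_{op}\|x\|_n$, so $M$ is a densely defined bounded operator from $(X,\|\cdot\|_n)$ into the Banach space $X$, and the extension follows from the B.L.T. theorem. The only difference is that the paper cites the B.L.T. theorem directly (via \cite{ReedSimon}) while you reproduce its standard Cauchy-sequence proof, which is harmless.
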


\begin{proof}
	By definition $X$ is a dense subset of $X_n$, for every $n \geq 1$. Then for any $n \geq 1$ and an $x \in X \subset X_n$ we have 
	\begin{equation}
		\begin{split}
			\|Mx\|_X &= \frac{\|M\|_{op}}{\|M\|_{op}}\|Mx\|_X \\
			&\leq \|M\|_{op}\cdot\max\{|\alpha|,||s||_\infty,\|P_nx\|_X,\frac{1}{\|M\|_{op}}\|Mx\|_X\}\\ 
			&= \|M\|_{op}\|x\|_n. 
		\end{split}
	\end{equation} 
	That is, $M:X \subset X_n \to X$ is a densely defined, bounded linear operator. The existence of a unique bounded linear extension, denoted $\overline{M}^n : X_n \to X$, directly follows from the Bounded Linear Transformation (B.L.T.) Theorem, a proof of which can be found in $\cite{ReedSimon}$. 
\end{proof}

The operator $\overline{M}^n$ is often referred to as the closure of the operator $M$ with respect to the space $X_n$. For ease of notation we will simply write $M:X_n \to X$ to denote this closure with respect to the space $X_n$, although one should always keep in mind that the closure entirely depends on the value of $n$. We now prove the following lemma.

\begin{lem} \label{lem:Surjective} 
	$M: X_n \to X$ is surjective for every $n \geq 1$. 
\end{lem}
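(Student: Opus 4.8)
The plan is to combine the density of the range of $M:X\to X$ (Corollary \ref{cor:MMatrix}) with the commutative diagram of Claim \ref{Claim3} and the boundedness of the finite-rank inverse $M_n^{-1}$: given $y\in X$, a preimage sequence for $y$ drawn from $M(X)$ will turn out to be $\|\cdot\|_n$-Cauchy, and its limit in $X_n$ is sent to $y$ by the closure of $M$.

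First I would fix $n\geq 1$ and an arbitrary $y=(\alpha_y,s_y,\psi_y)\in X$. Since $M:X\to X$ has dense range by Corollary \ref{cor:MMatrix}, choose $x_k=(\alpha_k,s_k,\psi_k)\in X$ with $Mx_k\to y$ in the norm of $X$. The heart of the argument is to show that $\{x_k\}_{k\geq 1}$ is Cauchy with respect to $\|\cdot\|_n$. Once this is established, $\{x_k\}$ converges to some $\hat{x}\in X_n$, and since the closure $M:X_n\to X$ is bounded, hence continuous, we get $M\hat{x}=\lim_{k}Mx_k=y$; as $y$ was arbitrary this yields surjectivity.

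To verify the Cauchy property I would estimate each of the four entries appearing in the maximum defining $\|x_k-x_\ell\|_n$ in $(\ref{nNorm})$ separately. The entry $\frac{1}{\|M\|_{op}}\|M(x_k-x_\ell)\|_X$ tends to $0$ because the convergent sequence $\{Mx_k\}$ is Cauchy in $X$. The entry $|\alpha_k-\alpha_\ell|$ tends to $0$ because, by the block form $(\ref{DerivativeMatrix})$, the first coordinate of $Mx_k$ is exactly $\alpha_k$, so $\alpha_k\to\alpha_y$. For $\|s_k-s_\ell\|_\infty$: the second coordinate of $Mx_k$ is $M_{21}\alpha_k+M_{22}s_k$, which converges; subtracting $M_{21}\alpha_k$ (convergent by the previous step and continuity of $M_{21}$) shows $M_{22}s_k$ converges, and boundedness of $M_{22}^{-1}$ from Lemma \ref{lem:SPartial} then gives that $\{s_k\}$ converges, hence is Cauchy. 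Finally, for $\|P_n(x_k-x_\ell)\|_X$, I would invoke Claim \ref{Claim3}: since $x_k-x_\ell\in X$ we have $MP_n(x_k-x_\ell)=P_{M(E_n)}M(x_k-x_\ell)\in M(E_n)$, so applying the bounded inverse $M_n^{-1}:M(E_n)\to E_n$ gives
\[
	P_n(x_k-x_\ell)=M_n^{-1}P_{M(E_n)}\big(Mx_k-Mx_\ell\big),
\]
whence $\|P_n(x_k-x_\ell)\|_X\leq C(n)\,\|P_{M(E_n)}\|_{op}\,\|Mx_k-Mx_\ell\|_X\to 0$. Taking the maximum over the four entries gives $\|x_k-x_\ell\|_n\to 0$, completing the Cauchy argument.

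The main obstacle — and the reason the norm $\|\cdot\|_n$ was built with the $\|P_nx\|_X$ slot — is precisely the third term: because $M_{33}$ fails to be surjective, the phase components $\psi_k$ need not converge in $c_0(\Lambda)$, so there is no direct way to control $P_n(x_k-x_\ell)$ from the boundedness of $M_{22}^{-1}$ alone. The resolution is the finite-dimensionality bundled into $E_n$ together with Claim \ref{Claim3}, which allows the convergence of the images $\{Mx_k\}$ to be pulled back through the bounded finite-rank inverse $M_n^{-1}$; the resulting limit $\hat{x}$ lives in the completion $X_n$ rather than in $X$, which is exactly why the passage to $X_n$ was needed and why $M:X\to X$ itself was not surjective.
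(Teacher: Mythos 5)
Your proof is correct and follows essentially the same route as the paper: approximate $y$ by elements $Mx_k$ of the range of $M:X\to X$, show $\{x_k\}$ is Cauchy with respect to $\|\cdot\|_n$, and conclude via completeness of $X_n$ and continuity of the closure of $M$. The only difference is cosmetic: the paper gets the Cauchy estimate in one stroke from Lemma $\ref{lem:InverseBounds}$, using that $B_\mu(y_k - y_{k'}) = x_k - x_{k'}$ for elements of the range so that $\|x_k - x_{k'}\|_n \leq \Gamma(n)\|Mx_k - Mx_{k'}\|_X$, whereas you re-derive the same coordinate-wise bounds (including the $M_n^{-1}P_{M(E_n)}$ estimate for the $P_n$ entry) directly.
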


\begin{proof}
	To show that $M:X_n \to X$ is surjective, we show that its range is closed and dense. First, since $X \subset X_n$, from Corollary $\ref{cor:MMatrix}$ we have that $M:X_n \to X$ has dense range. Now let $y \in X$. From the density of the range of $M:X \to X$, there exists a sequence $\{y_k\}_{k = 1}^\infty$ in the range of $M$ such that $\|y_k - y\|_X \to 0$ as $k \to \infty$. Then by definition, for each $y_k$, there exists $x_k \in X$ such that $Mx_k = y_k$. Notice that for any $\mu \in (0,1)$ we have $B_\mu y_k = x_k$ since $y_k$ belongs to the range of $M:X \to X$. Furthermore, since $y_k - y_{k'}$ belongs to the range of $M:X \to X$, we have that $B_\mu(y_k - y_{k'}) = x_k - x_{k'}$ for any $k,k'\geq 1$. Therefore, from Lemma $\ref{lem:InverseBounds}$ we have that 
	\begin{equation}
		\|x_k - x_{k'}\|_n = \|B_\mu(y_k - y_{k'})\|_n \leq \Gamma(n) \|y_k - y_{k'}\|_X.
	\end{equation}  
	Since $\Gamma(n) < \infty$ and $\{y_k\}_{k = 1}^\infty$ is a Cauchy sequence, it follows that $\{x_k\}_{k = 1}^\infty$ is a Cauchy sequence in $X_n$. Since $X_n$ is complete, there exists an element $x \in X_n$ such that $x_k \to x$ as $k \to \infty$. Then by the continuity of $M:X_n \to X$, it follows that $Mx = y$. Hence, $M:X_n \to X$ is closed.  
\end{proof} 

Finally, we are now in a position to apply Theorem $\ref{thm:CravenNashed}$ to our mapping $G$ in $(\ref{GMapping})$.

\begin{proof}[Proof of Theorem \ref{thm:ReducedSolution}]
Recall that our interest lies in the closed convex cone $S = \mathbb{R} \times \{0\} \times \{0\}$ defined in $(\ref{ConvexCone})$. For each $\mu \in (0,1)$ we have the relation $n(\mu)$ defined above, and an approximate right inverse $B_\mu:X \to X$ which has a bound function given by $k_0\mu^{-\frac{1}{2}}$, for some $k_0 > 0$, thus satisfying the hypothesis of Theorem $\ref{thm:CravenNashed}$. 

Then for any $\mu$ small and fixed, from Lemma $\ref{lem:Surjective}$ there exists a $c' \in X_{n(\mu)}$ such that 
\begin{equation}
	Mc' = \begin{pmatrix}
		1 \\
		0 \\
		0
	\end{pmatrix} \in S.
\end{equation} 
Taking $c = \frac{1}{\|c'\|_{n(\mu)}} c' \in X_{n(\mu)}$ gives $-Mc \in S$ and $\|c\|_{n(\mu)} = 1$. Hence, 
\begin{equation}
	-[G(0,0,0) + Mc] =  \frac{1}{\|c'\|_{n(\mu)}}\begin{pmatrix}
		-1 \\
		0 \\
		0
	\end{pmatrix} \in S,
\end{equation}
from $G(0,0,0) = (0,0,0)^T$ and the linearity of $M$. Therefore, from Theorem $\ref{thm:CravenNashed}$ we have that upon writing $c = (c_1,c_2,c_3)$, there exists a solution 
\begin{equation}
	\begin{pmatrix}
		\alpha(t) \\
		s(t) \\
		\psi(t)
	\end{pmatrix} = t\begin{pmatrix}
		c_1 \\
		c_2 \\
		c_3
	\end{pmatrix} + \begin{pmatrix}
	 \eta_{\alpha}(t) \\
	 \eta_s(t) \\
	 \eta_\psi(t) 
	 \end{pmatrix} \in X_{n(\mu)},	  
\end{equation}
to $-G(\alpha,s,\psi) \in S$ for some sufficiently small $t > 0$. This implies that both $G^1(\alpha,s,\psi) = 0$ and $G^2(\alpha,s,\psi) = 0$ at these solutions. 

A close inspection of the proof of Theorem $\ref{thm:CravenNashed}$ in $\cite{CravenNashed}$ shows that this solution is obtained by solving 
\begin{equation} \label{SolutionEquation}
	G(\alpha(t),s(t),\psi(t)) - G(0,0,0) - tMc = 0
\end{equation}
for $t > 0$. Then, using the fact that $Mc = \frac{1}{\|c'\|_{n(\mu)}}(1,0,0)^T$ and the form of $G$, we have that the first component of $(\ref{SolutionEquation})$ gives
\begin{equation}
	\alpha(t) = \frac{1}{\|c'\|_{n(\mu)}}t. 
\end{equation} 
Since, $\frac{1}{\|c'\|_{n(\mu)}} > 0$ we have that $\alpha(t) > 0$ for all values of $t > 0$ for which it is defined. Furthermore, $\eta_\alpha(t) \equiv 0$ for all $t > 0$ in its domain. The function $\alpha(t)$ is locally invertible allowing one to consider the function 
\begin{equation}
	t(\alpha) = \|c'\|_{n(\mu)} \alpha,
\end{equation}
for sufficiently small $\alpha > 0$. Therefore solutions $(\alpha(t),s(t),\psi(t))$ can be re-parametri\-zed in terms of small values of $\alpha > 0$. 

An a priori check reveals that since $s(t)$ (or equivalently $s(\alpha)$) represents the second component of an element in $X_{n(\mu)}$ and that $s(t) \to 0$ as $t \to 0^+$, we notice that for sufficiently small $t > 0$ we do indeed have $\|s(t)\|_\infty < \frac{a}{2}$, and therefore belonging to the domain of $G$. Furthermore, as long as this solution exists we have that $s(t) \in \ell^\infty(\Lambda)$ by definition of the norm on $X_{n(\mu)}$, thus showing that the radial perturbation $s$ is uniformly bounded and can be made uniformly as small as necessary. 

Then as noted at the beginning of this section, the zeros of $G^1$ and $G^2$ lie in one-to-one correspondence with those of $(\ref{FMapping})$. This gives solutions parametrized by sufficiently small $\alpha > 0$ given by
\begin{equation}
	\begin{pmatrix}
		r(\alpha) \\
		\theta(\alpha)
	\end{pmatrix} =\begin{pmatrix}
		{\bf a} \\
		\bar{\theta}
	\end{pmatrix} + t(\alpha)\begin{pmatrix}
		c_2 \\
		 c_3
	\end{pmatrix} + \begin{pmatrix}
		\eta_s(t(\alpha)) \\
		\eta_\psi(t(\alpha))
	\end{pmatrix}	
\end{equation}
to $(\ref{FMapping})$. Furthermore, letting $\alpha \to 0^+$ gives $t \to 0^+$, thus giving that $r(\alpha) \to {\bf a}$ and $\theta(\alpha) \to \bar{\theta}$, finishing the proof of Theorem $\ref{thm:ReducedSolution}$.
\end{proof}

\section{Discussion} \label{sec:Discussion} 

In this work we have successfully demonstrated the persistence of rotating wave solutions to system $(\ref{LambdaOmegaLDS})$ off of the anti-continuum limit $\alpha = 0$. This result was achieved by carefully constructing a mapping whose roots can be shown to lie in one-to-one correspondence with rotating wave solutions to the full Lambda-Omega system in question, and then applying a non-standard Implicit Function Theorem to this mapping. This work therefore has opened the door to a wide range of subsequent analyses, particularly in an effort to extend the scientific communities understanding of rotating waves in the absence of key symmetries in the underlying mathematical equations. Here we will discuss some avenues for future work which would be of great interest. 

First, our work has achieved a solution so that the phases around any concentric ring about the centre of rotation range over the values $0$ to $2\pi$, thus our rotating wave is of the single-armed variety. Although our work has been entirely focussed on single-armed rotating waves, it should be noted by the reader that $m$-armed rotating waves are defined in an analogous way in that it would have its phases ranging over the values $0$ up to $2\pi m$ around any concentric ring about the centre of rotation. Although $m$-armed rotating waves with $m > 1$ are not well-documented in the typical motivating application of cardiac electrophysiology, they are known to exist in nature nonetheless. Take for example the Belousov-Zhabotinskii chemical reactions, where spiral waves have been observed with up to six arms $\cite{MultiArmedSpiral}$. Such investigations could potentially motivate the study of multi-armed rotating waves in the discrete spatial setting as well. 

Unlike continuous spatial models where much of the analysis of single and multi-armed rotating waves can be handled in a similar way, the analysis undertaken here does not seem to lend itself to the existence of multi-armed rotating waves. That is, for a positive integer $m$, an $m$-armed rotating wave solution to $(\ref{LambdaOmegaLDS})$ would satisfy 
\begin{equation}
	z_{i,j}(t) = \bar{r}_{i,j}e^{\Omega(\alpha)t + m\bar{\theta}_{i,j}},
\end{equation}
where $(\bar{r}_{i,j},\bar{\theta}_{i,j})$ are steady-state solutions to 
\begin{equation} \label{MultiArmedPolar}
	\begin{aligned}
	\begin{split}
		\dot{r}_{i,j} = \alpha\sum_{i',j'}& [r_{i',j'}\cos(m(\theta_{i',j'} - \theta_{i,j})) - r_{i,j}] + r_{i,j}\lambda(r_{i,j}), \\
		\dot{\theta}_{i,j} = \alpha\sum_{i',j'}& \frac{r_{i',j'}}{r_{i,j}}\sin(m(\theta_{i',j'} - \theta_{i,j})) + \alpha\omega_1(r_{i,j},\alpha).
	\end{split}
	\end{aligned}
\end{equation}  
Despite the cases for $m > 1$ and $m = 1$ appearing quite similar, we remark that a steady-state solution to the phase components as $\alpha \to 0^+$ must satisfy
\begin{equation} \label{MultiArmedPhase}
	0 = \sum_{i',j'} \sin(m(\theta_{i',j'} - \theta_{i,j}))
\end{equation}
where we again use Hypothesis $\ref{Hyp:LambdaOmega}$ to have that $r_{i,j} \to a$ as $\alpha \to 0^+$. When $m > 1$ this coupling function does not satisfy the hypotheses of the work in the first part of this investigation and therefore requires potentially new techniques to obtain a solution. Even in the case when a solution can be obtained for a specific $m > 1$ we may not necessarily have 
\begin{equation}
	\cos(m(\theta_{i',j'} - \theta_{i,j})) \geq 0	
\end{equation}  
for all nearest-neighbour interactions. In this case the techniques used in the proof of Lemma $\ref{lem:PsiPartial}$ to show injectivity becomes significantly more complicated, if they can be undertaken at all. 

Numerical investigations undertaken in $\cite{Paullet}$ concluded that a $6\times 6$ lattice can exhibit a two-armed rotating wave solutions, thus leading one to conjecture that there exists an extension of this work to at least two-armed rotating waves. Furthermore, it is intuitive to think that spirals with $2$ and $4$ arms could at least be found in the system $(\ref{LambdaOmegaLDS})$ since these number of arms are commensurate with the symmetries of the lattice. Therefore, we could potentially work with a similar reduced phase system and exploit the symmetries of the square lattice to extend this solution over the whole lattice, much like what was done in $(\ref{SymmetryExtensions2})$. On the other hand, values such as $m = 3$ will be incommensurate with the symmetries of the lattice, thus presenting a technical hurdle in the analysis which at present has no straightforward way of being overcome. 
    
The second question that immediately comes to mind upon obtaining this rotating wave solution for small $\alpha > 0$ is: what is the stability of this solution? This is clearly a natural follow-up investigation which is certainly not as straightforward as one expects on first glance. That is, the failure for the linearization about the rotating wave at $\alpha = 0$ to be invertible immediately eliminates the possibility of the linearization to exhibit exponential stability, and therefore traditional dynamical systems techniques of extending linearized stability to local nonlinear stability is not possible. This then clearly necessitates a greater range of functional-analytical techniques and is expected to be the subject of a subsequent investigation.  

Finally, the primary motivation for this work was to investigate the effect that the loss of continuous Euclidean symmetry has upon the dynamics and bifurcations of rotating waves. At present this is still an open problem which is ambitious and long-term and therefore currently has ill-defined short-term goals. It is the intention of this work to help initiate formal analyses of the dynamics of these waves, and in particular, how they differ from the continuous spatial setting. Hence we have created a foundation in which further studies can expound upon in much the same way the seminal work of Zinner has done for traveling waves in the discrete spatial medium $\cite{Zinner}$.

\section*{Acknowledgements} 

This work was supported by an Ontario Graduate Scholarship while at the University of Ottawa. The author is very thankful to his supervisors Benoit Dionne and Victor LeBlanc for their guidance on all matters related to the work.

\appendix
\section{Strongly Fr\'echet Differentiable Implies Restricted Strongly Hadamard Differentiable} \label{sec:Appendix1} 

To begin, let $X$ and $Y$ be Banach spaces. The function $F:X\to Y$ is {\em strongly Hadamard differentiable} at the point $x_0 \in X$ if there exists a bounded linear operator $A:X\to Y$ such that, for any continuous function $u:[0,\infty) \to X$ for which $u'(0^+)$ exists and $u(0) = x_0$, the composition $F\circ u$ is strongly differentiable at $0^+$ with derivative $(F\circ u)'(0^+) = Au'(0^+)$. Furthermore, $F:X\to Y$ is called {\em restrictedly strongly Hadamard differentiable} at the point $x_0 \in X$ if the strongly Hadamard differentiable property holds when $u$ is restricted to being strongly differentiable at $0^+$. 

\begin{prop}
	Assume $F:X\to Y$ is strongly Fr\'echet differentiable at the point $x_0 \in X$. Then $F$ is restricted strongly Hadamard differentiable at $x_0$. 
\end{prop}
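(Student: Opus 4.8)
The plan is to take $A := F'(x_0)$, the bounded linear operator furnished by strong Fréchet differentiability at $x_0$, and to verify the definition of restricted strong Hadamard differentiability directly. Recall that a function $g$ valued in a Banach space is strongly differentiable at $0^+$ with derivative $g'(0^+)$ when $\|g(t_1) - g(t_2) - (t_1 - t_2) g'(0^+)\| / |t_1 - t_2| \to 0$ as $t_1, t_2 \to 0^+$. So, fixing an arbitrary continuous $u : [0,\infty) \to X$ with $u(0) = x_0$ that is strongly differentiable at $0^+$, I must show that $F \circ u$ is strongly differentiable at $0^+$ with $(F \circ u)'(0^+) = A u'(0^+)$. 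First note $F \circ u$ is well-defined and continuous near $0$: the strong Fréchet estimate gives $\|F(x_1) - F(x_2)\|_Y \leq \|F(x_1) - F(x_2) - A(x_1 - x_2)\|_Y + \|A\|_{op} \|x_1 - x_2\|_X$, so $F$ is Lipschitz, hence continuous, on a neighbourhood of $x_0$, and $u$ is continuous. The whole argument then rests on the splitting
\[
	F(u(t_1)) - F(u(t_2)) - (t_1 - t_2) A u'(0^+) = \mathrm{(I)} + \mathrm{(II)}, \qquad \mathrm{(I)} := F(u(t_1)) - F(u(t_2)) - A(u(t_1) - u(t_2)), \quad \mathrm{(II)} := A[u(t_1) - u(t_2) - (t_1 - t_2) u'(0^+)],
\]
which I would divide by $|t_1 - t_2|$ and bound term by term as $t_1, t_2 \to 0^+$.

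Term $\mathrm{(II)}$ is the easy one: boundedness of $A$ gives $\|\mathrm{(II)}\|_Y / |t_1 - t_2| \leq \|A\|_{op}\, \|u(t_1) - u(t_2) - (t_1 - t_2) u'(0^+)\|_X / |t_1 - t_2|$, which tends to $0$ precisely because $u$ is strongly differentiable at $0^+$. The same strong-differentiability estimate, rearranged as $\|u(t_1) - u(t_2)\|_X \leq \|u(t_1) - u(t_2) - (t_1 - t_2) u'(0^+)\|_X + |t_1 - t_2|\, \|u'(0^+)\|_X$, also yields an a priori Lipschitz-type bound $\|u(t_1) - u(t_2)\|_X / |t_1 - t_2| \leq 1 + \|u'(0^+)\|_X$ valid once $t_1, t_2$ are small enough; this control on $u$ is what I will feed into term $\mathrm{(I)}$.

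For term $\mathrm{(I)}$, on the set of pairs where $u(t_1) \neq u(t_2)$ I factor
\[
	\frac{\|\mathrm{(I)}\|_Y}{|t_1 - t_2|} = \frac{\|F(u(t_1)) - F(u(t_2)) - A(u(t_1) - u(t_2))\|_Y}{\|u(t_1) - u(t_2)\|_X} \cdot \frac{\|u(t_1) - u(t_2)\|_X}{|t_1 - t_2|},
\]
while in the degenerate case $u(t_1) = u(t_2)$ the quantity $\|\mathrm{(I)}\|_Y / |t_1 - t_2|$ is identically $0$ and needs no attention. Since $u$ is continuous with $u(0) = x_0$, both $u(t_1)$ and $u(t_2)$ lie in any prescribed neighbourhood of $x_0$ once $t_1, t_2$ are small, so given $\varepsilon > 0$ the first factor is $< \varepsilon$ by strong Fréchet differentiability of $F$ at $x_0$; the second factor is at most $1 + \|u'(0^+)\|_X$; hence $\|\mathrm{(I)}\|_Y / |t_1 - t_2| \to 0$. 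Combining the estimates for $\mathrm{(I)}$ and $\mathrm{(II)}$ gives $\|F(u(t_1)) - F(u(t_2)) - (t_1 - t_2) A u'(0^+)\|_Y / |t_1 - t_2| \to 0$, i.e.\ $F \circ u$ is strongly differentiable at $0^+$ with derivative $A u'(0^+)$, which is exactly the asserted restricted strong Hadamard differentiability.

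The computation is short, so there is no deep obstacle; the point I would be careful to write out is the uniform bookkeeping in $(t_1, t_2)$ — choosing a single threshold below which simultaneously $u(t_1), u(t_2)$ sit in the neighbourhood of $x_0$ where the strong Fréchet ratio is $< \varepsilon$ and the difference quotient $\|u(t_1) - u(t_2)\|_X / |t_1 - t_2|$ is bounded — together with the separate (trivial) treatment of pairs with $u(t_1) = u(t_2)$, so that the factorisation used for $\mathrm{(I)}$ is never an indeterminate form. One should also record that it is essential here that $u$ be \emph{strongly} (not merely ordinarily) differentiable at $0^+$: it is the two-point estimate on $u$ that supplies both the vanishing of $\mathrm{(II)}/|t_1 - t_2|$ and the a priori bound driving $\mathrm{(I)}$.
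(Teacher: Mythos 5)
Your proposal is correct and follows essentially the same route as the paper's Appendix~A proof: the same decomposition into $F(u(t_1))-F(u(t_2))-A(u(t_1)-u(t_2))$ plus $A[u(t_1)-u(t_2)-(t_1-t_2)u'(0^+)]$, with the strong Fr\'echet estimate for $F$ controlling the first term and the strong differentiability of $u$ controlling both the second term and the difference quotient of $u$. The only differences are cosmetic bookkeeping (you factor the quotient and treat the degenerate case $u(t_1)=u(t_2)$ explicitly, whereas the paper multiplies the $\varepsilon$-estimates through by $|t_1-t_2|$), so no further changes are needed.
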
     

\begin{proof}
	Let $\|\cdot\|_X$ and $\|\cdot\|_Y$ denote the norms of the Banach spaces $X$ and $Y$, respectively. Fix $\varepsilon > 0$. Then since $F: X \to Y$ strongly Fr\'echet differentiable at the point $x_0 \in X$, there exists a bounded linear operator $A:X \to Y$ and $\delta_1 > 0$  such that 
	\begin{equation} \label{Derivative1}
		\|F(x_1) - F(x_2) - A(x_1 - x_2)\|_Y < \varepsilon \|v_1 - v_2\|_X,
	\end{equation} 
	for all $\|x_1 - x_0\|_X, \|x_2 - x_0\|_X < \delta_1$.
	
	Now let $u:[0,\infty) \to X$ be a continuous function which is strongly differentiable at $0^+$ and satisfies $u(0) = x_0$. Then since $u$ is continuous and satisfies $u(0) = x_0$, there exists a $\delta_2 > 0$ such that $\|u(t) - x_0\|_V < \delta_1$ for all $t \in [0,\delta_2)$. Furthermore, since $u$ is strongly differentiable at $0^+$ there exists a $\delta_3 > 0$ such that 
	\begin{equation} \label{Derivative2}
		\|u(t_1) - u(t_2) - u'(0^+)(t_1 - t_2)\|_X < \varepsilon |t_1 - t_2|,
	\end{equation}    
	for all $0 < t_1,t_2 < \delta_3$.
	
	Now, let $\delta := \min\{\delta_2,\delta_3\} > 0$ and consider any $t_1,t_2 \in [0,\delta)$. Then, from $(\ref{Derivative1})$ and $(\ref{Derivative2})$ we have 
	\begin{equation}
		\begin{split}
		\begin{aligned}
		\|F(u(t_1)) - F(u(t_2)) - &A(u(t_1) - u(t_2))\|_Y \\
		&< \varepsilon \|u(t_1) - u(t_2)\|_X \\
		&\leq \varepsilon\|u(t_1) - u(t_2) - u'(0^+)(t_1 - t_2)\|_X + \varepsilon\|u'(0^+)(t_1 - t_2)\|_X \\
		&< \varepsilon^2 |t_1 - t_2| + \varepsilon|u'(0^+)\|t_1 - t_2|. 
		\end{aligned}
		\end{split}
	\end{equation}
	Similarly, using $(\ref{Derivative2})$ and the fact that $A$ is bounded linear, denoting its operator norm $\|A\|_{op}$, we obtain
	\begin{equation}
		\begin{split}
		\begin{aligned}
		\|Au(t_1) - Au(t_2) - Au'(0^+)(t_1 - t_2)\|_Y &\leq \|A\|_{op}\|u(t_1) - u(t_2) - u'(0^+)(t_1 - t_2)\|_X \\
		&< \varepsilon \|A\|_{op} |t_1 - t_2|. 
		\end{aligned}
		\end{split}   
	\end{equation}
	Putting this all together shows that 
	\begin{equation}
		\begin{split}
		\begin{aligned}
		\|F(u(t_1)) - F(u(t_2)) - &Au'(0^+)(t_1 - t_2)\|_Y  \\ 
		&\leq \|F(u(t_1)) - F(u(t_2)) - A(u(t_1) - u(t_2))\|_Y \\ 
		&\ \ \ \ \ + \|Au(t_1) - Au(t_2) - Au'(0^+)(t_1 - t_2)\|_Y \\
		&< \varepsilon^2 |t_1 - t_2| + \varepsilon|u'(0^+)\|t_1 - t_2| + \varepsilon \|A\|_{op} |t_1 - t_2|.  
		\end{aligned}
		\end{split} 	
	\end{equation}
	Dividing by $|t_1 - t_2|$ gives
	\begin{equation}
		\frac{\|F(u(t_1)) - F(u(t_2)) - Au'(0^+)(t_1 - t_2)\|_Y}{|t_1 - t_2|} < \varepsilon^2 + \varepsilon(\|A\|_{op} + |u'(0^+)|).	
	\end{equation}
	Since $\varepsilon > 0$ was arbitrary, this quotient can be made arbitrarily small, thus completing the proof. 
\end{proof}

\section{Proof of Proposition $\ref{prop:FrechetDerivative}$} \label{sec:Appendix2} 

Here we will prove Proposition $\ref{prop:FrechetDerivative}$. This proof is broken down into two lemmas which lead to the proof of the Proposition. Recall that $B_{\frac{a}{2}}(0)$ denotes the ball of radius $\frac{a}{2}$ centred at $0$ in the space $\ell^\infty(\Lambda)$. 

\begin{lem} \label{lem:G1Derivative} 
	$G^1(\alpha,s,\psi) : \mathbb{R} \times B_{\frac{a}{2}}(0) \times c_0(\Lambda) \to \ell^\infty(\Lambda)$ is strongly Fr\'echet differentiable at $(\alpha,s,\psi) = (0,0,0)$. The Fr\'echet derivative at this point is the bounded linear operator which acts by $(\alpha,s,\psi) \mapsto M_{21}\alpha + M_{22}s$, where $M_{21}$ and $M_{22}$ are as defined in $(\ref{M21})$ and $(\ref{M22})$, respectively.
\end{lem}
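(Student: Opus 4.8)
The plan is to write $G^1$ as the sum of a reaction term and a coupling term and to establish strong Fr\'echet differentiability of each at the origin separately; since a finite sum of strongly Fr\'echet differentiable maps is strongly Fr\'echet differentiable with derivative the sum of the derivatives, this yields the lemma. Concretely, put $g(R) := (a+R)\lambda(a+R)$, let $N(s) := \{g(s_{i,j})\}_{(i,j)\in\Lambda}$ be the reaction term, and let
$$\Phi_{i,j}(s,\psi) := \sum_{i',j'}\big[(a+s_{i',j'})\cos(\bar\theta_{i',j'}+\psi_{i',j'}-\bar\theta_{i,j}-\psi_{i,j}) - (a+s_{i,j})\big],$$
so that $G^1(\alpha,s,\psi) = N(s) + \alpha\,\Phi(s,\psi)$.

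\emph{Reaction term.} By Hypothesis~\ref{Hyp:LambdaOmega}(1), $g$ is $C^1$ on $[-a/2,a/2]$ with $g(0) = a\lambda(a) = 0$ and $g'(0) = \lambda(a) + a\lambda'(a) = a\lambda'(a)$, which is exactly the diagonal operator $M_{22}$ from $(\ref{M22})$. For $s_1,s_2 \in B_{\frac{a}{2}}(0)$ the mean value theorem yields, for each $(i,j)\in\Lambda$, a point $\xi_{i,j}$ between $s_{1,i,j}$ and $s_{2,i,j}$ with $g(s_{1,i,j})-g(s_{2,i,j}) = g'(\xi_{i,j})(s_{1,i,j}-s_{2,i,j})$, whence
$$\big\| N(s_1) - N(s_2) - M_{22}(s_1-s_2)\big\|_\infty \le \Big(\sup_{(i,j)\in\Lambda}\big|g'(\xi_{i,j}) - g'(0)\big|\Big)\,\|s_1-s_2\|_\infty.$$
Since $g'$ is uniformly continuous on the compact interval $[-a/2,a/2]$ and $|\xi_{i,j}| \le \max\{\|s_1\|_\infty,\|s_2\|_\infty\}$ for all $(i,j)$, the prefactor tends to $0$ as $s_1,s_2\to 0$ in $\ell^\infty(\Lambda)$, which is exactly strong Fr\'echet differentiability of $N$ at $s=0$ with derivative $M_{22}$.

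\emph{Coupling term.} Since every index of $\Lambda$ has at most four nearest neighbours and $\|s\|_\infty < a/2$ forces every factor $a+s_\bullet$ into $[a/2,3a/2]$, one checks that $\Phi$ maps $B_{\frac{a}{2}}(0)\times c_0(\Lambda)$ boundedly into $\ell^\infty(\Lambda)$ and is Lipschitz there with some constant $L$ (cosine is globally Lipschitz). Note $M_{21}\alpha = \alpha\,\Phi(0,0)$ by $(\ref{M21})$. Writing $\Phi_k := \Phi(s_k,\psi_k)$ and $\Phi_0 := \Phi(0,0)$, for $x_k = (\alpha_k,s_k,\psi_k)$ in the domain the identity
$$\alpha_1\Phi_1 - \alpha_2\Phi_2 - M_{21}(\alpha_1-\alpha_2) = \alpha_1(\Phi_1 - \Phi_2) + (\alpha_1-\alpha_2)(\Phi_2 - \Phi_0)$$
gives
$$\big\|\alpha_1\Phi_1 - \alpha_2\Phi_2 - M_{21}(\alpha_1-\alpha_2)\big\|_\infty \le L\big(|\alpha_1| + \|x_2\|_X\big)\,\|x_1-x_2\|_X.$$
Dividing by $\|x_1-x_2\|_X$ and letting $x_1,x_2\to 0$ --- so that $|\alpha_1|\le\|x_1\|_X\to 0$ and $\|x_2\|_X\to 0$ --- shows the quotient vanishes, so the coupling term is strongly Fr\'echet differentiable at the origin with derivative $(\alpha,s,\psi)\mapsto M_{21}\alpha$; the $s$- and $\psi$-directions contribute nothing because they appear multiplied by $\alpha$, which vanishes at the base point. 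Adding the two pieces, $G^1$ is strongly Fr\'echet differentiable at $(0,0,0)$ with derivative $(\alpha,s,\psi)\mapsto M_{21}\alpha + M_{22}s$, a bounded linear operator since $\|M_{21}\|_{op}\le 8a$ and $\|M_{22}\|_{op} = a|\lambda'(a)|$.

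\emph{Anticipated obstacle.} I expect the only genuine difficulty to be bookkeeping: verifying that the Lipschitz and uniform-continuity estimates above hold with constants \emph{independent of} $(i,j)\in\Lambda$, which is where the restriction $\|s\|_\infty < a/2$ and the finite coordination number of the lattice are essential and where the $\ell^\infty$ (as opposed to coordinate-wise) character of the norm needs genuine attention. A clean alternative would be to show directly that $G^1 \in C^1$ on the open set $\mathbb{R}\times B_{\frac{a}{2}}(0)\times c_0(\Lambda)\subset X$ by differentiating under the finite sums and checking continuity of the partial derivatives, and then invoke Theorem~\ref{thm:StrongDiff}; the estimates involved are essentially the same.
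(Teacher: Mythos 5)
Your proposal is correct, but it reaches the conclusion by a genuinely different organization of the estimate than the paper. The paper works index-by-index and feeds the scalar nonlinearities $x_1x_2\cos(x_3-x_4)$, $x_1x_2$ and $x\lambda(x)$ into Theorem~\ref{thm:StrongDiff} at the base points $(0,a,\bar{\theta}_{i',j'},\bar{\theta}_{i,j})$, $(0,a)$ and $a$, then sums the resulting $\varepsilon$-estimates over the at most four nearest neighbours and takes the supremum over $(i,j)\in\Lambda$. You instead split $G^1=N(s)+\alpha\Phi(s,\psi)$ and exploit the fact that the entire coupling block carries the prefactor $\alpha$, which vanishes at the base point: for that block you never differentiate the cosine at all, using only a Lipschitz bound for $\Phi$ (uniform in $(i,j)$ thanks to the coordination number $\le 4$ and $a+s_\bullet\in(a/2,3a/2)$) together with the correct identity $\alpha_1\Phi_1-\alpha_2\Phi_2-(\alpha_1-\alpha_2)\Phi_0=\alpha_1(\Phi_1-\Phi_2)+(\alpha_1-\alpha_2)(\Phi_2-\Phi_0)$ and the observation $M_{21}\alpha=\alpha\Phi(0,0)$; your mean-value-theorem treatment of the reaction term, with uniform continuity of $g'$ on $[-a/2,a/2]$ supplying the uniformity in the index, plays the role that strong differentiability of $x\lambda(x)$ at $a$ plays in the paper. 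What your route buys is a shorter, more elementary proof of this particular lemma that also sidesteps any concern about choosing the $\delta$ from pointwise strong differentiability uniformly over the varying base points $(\bar{\theta}_{i',j'},\bar{\theta}_{i,j})$; what the paper's route buys is a single template that is reused essentially verbatim for $G^2$ in Lemma~\ref{lem:G2Derivative}, where the sine coupling is not multiplied by $\alpha$ and your Lipschitz shortcut would no longer suffice, so genuine differentiation of the trigonometric terms cannot be avoided there. Your concluding operator bounds $\|M_{21}\|_{op}\le 8a$ and $\|M_{22}\|_{op}=a|\lambda'(a)|$, and the additivity of strong Fr\'echet differentiability, are all fine.
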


\begin{proof}
	For ease of notation let us return to the variables $r_{i,j} = a + s_{i,j}$ and $\theta_{i,j} = \bar{\theta}_{i,j} + \psi_{i,j}$ for all $(i,j) \in \Lambda$. We will write $r = \{r_{i,j}\}_{(i,j)\in\Lambda}$ and $\theta = \{\theta_{i,j}\}_{(i,j)\in\Lambda}$. Note that $G^1(\alpha,s,\psi)$ strongly Fr\'echet differentiable at $(\alpha, s,\psi) = (0,0,0)$ is equivalent to $G^1(\alpha,r,\theta)$ strongly Fr\'echet differentiable at $(\alpha,r,\theta) = (0, {\bf a}, \bar{\theta})$. 
	
	Begin by taking any $\varepsilon > 0$ and consider $(\alpha^1,s^1,\psi^1),(\alpha^2,s^2,\psi^2) \in \mathbb{R} \times B_{\frac{a}{2}}(0) \times c_0(\Lambda)$. We denote $r^1$ and $r^2$ to correspond to $s^1$ and $s^2$, respectively. Similarly $\theta^1$ and $\theta^2$ correspond to $\psi^1$ and $\psi ^2$. 
	
	Since consine is an infinitely differentiable function on $\mathbb{R}$, by applying Theorem $\ref{thm:StrongDiff}$ to the function $f:\mathbb{R}^4\to\mathbb{R}$ acting by $f(x_1,x_2,x_3,x_4) = x_1x_2\cos(x_3 - x_4)$ at the point $(x_1,x_2,x_3,x_4) = (0,a,\bar{\theta}_{i',j'},\bar{\theta}_{i,j})$ for each $(i,j)\in\Lambda$ and $(i',j')$ we have that there exists $\delta_1 > 0$ such that 
	\begin{equation} \label{CosineDiff}
		\begin{split}
		|\alpha^1r^1_{i',j'}\cos(\theta^1_{i',j'}& - \theta^1_{i,j}) - \alpha^2r^2_{i',j'}\cos(\theta^2_{i',j'} - \theta^2_{i,j}) - (\alpha^1 - \alpha^2)a\cos(\bar{\theta}_{i',j'} - \bar{\theta}_{i,j})| \\
		&< \varepsilon \max\{|\alpha^1 - \alpha^2|, |r^1_{i',j'} - r^2_{i',j'}|, |(\psi^1_{i',j'} - \psi^1_{i,j}) - (\psi^2_{i,j} - \psi^2_{i,j})| \} \\
		&\leq \varepsilon \max\{|\alpha^1 - \alpha^2|, \|r^1-r^2\|_\infty, |\psi^1_{i',j'} - \psi^2_{i',j'}| + |\psi^1_{i,j} - \psi^2_{i,j})| \} \\
		&\leq  2\varepsilon \max\{|\alpha^1 - \alpha^2|, \|r^1-r^2\|_\infty, \|\psi^1 - \psi^2\|_\infty\} \\
		& = 2\varepsilon \|(\alpha^1,s^1,\psi^1) - (\alpha^2,s^2,\psi^2)\|_X,  
		\end{split}
	\end{equation} 
	provided that $\max\{|\alpha^1|,|r^1_{i',j'}|,|\psi^1_{i',j'}|, |\psi^1_{i,j}|\}, \max\{|\alpha^2|,|r^2_{i',j'}|,|\psi^2_{i',j'}|, |\psi^2_{i,j}|\} < \delta_1$. Then $(\ref{CosineDiff})$ holds for all $(i,j)\in\Lambda$ when $\|(\alpha^1,s^1,\psi^1)\|_X, \|(\alpha^2,s^2,\psi^2)\|_X < \min\{\delta_1,\frac{a}{2}\}$. Similarly, from condition $(1)$ of Hypothesis $\ref{Hyp:LambdaOmega}$ we have assumed that $\lambda$ is continuously differentiable on $[0,\infty)$, and since $a > 0$ we may apply Theorem $\ref{thm:StrongDiff}$ to the function $f:\mathbb{R}\to\mathbb{R}$ acting by $f(x) = x\lambda(x)$ at the point $x=a$ to obtain a $\delta_2 > 0$ such that
	\begin{equation} \label{lambdaDiff}
		\begin{split}
		|r^1_{i,j}\lambda(r^1_{i,j}) - r^2_{i,j}\lambda(r^2_{i,j}) - a\lambda'(a)(s^1_{i,j} - s^2_{i,j})| &< \varepsilon|s^1_{i,j} - s^2_{i,j}| \\ 
		&\leq \varepsilon \|s^1 - s^2\|_\infty \\
		&\leq \varepsilon \|(\alpha^1,s^1,\psi^1) - (\alpha^2,s^2,\psi^2)\|_X,
		\end{split}
	\end{equation}  
	provided $|r^1_{i,j}|,|r^2_{i,j}| < \delta_2$. This in turn shows that $(\ref{lambdaDiff})$ holds for all $(i,j)\in\Lambda$ when $\|s^1\|_\infty,\|s^2\|_\infty < \min\{\frac{a}{2},\delta_2\}$.

	As a final aside, again from Theorem $\ref{thm:StrongDiff}$ applied to the function $f:\mathbb{R}^2\to\mathbb{R}$ acting by $f(x_1,x_2) = x_1x_2$ at the point $(x_1,x_2) = (0,a)$ there exists a $\delta_3 > 0$ such that 
	\begin{equation} \label{linearDiff}
		\begin{split}
		|\alpha^1r^1_{i,j} - \alpha^2r^2_{i,j} - (\alpha^1 - \alpha^2)a| &< \varepsilon \max\{|\alpha^1 - \alpha^2|, |r^1_{i,j} - r^2_{i,j}|\} \\
		&\leq \varepsilon\max\{|\alpha^1 - \alpha^2|, \|r^1 - r^2\|_\infty\} \\
		&\leq \varepsilon \|(\alpha^1,s^1,\psi^1) - (\alpha^2,s^2,\psi^2)\|_X, 
		\end{split}
	\end{equation}
	provided $ \max\{|\alpha^1 - \alpha^2|, |r^1_{i,j} - r^2_{i,j}|\} < \delta_3$. Then we have that $(\ref{linearDiff})$ holds for all $(i,j)\in \Lambda$ when $\max\{|\alpha^1 - \alpha^2|, \|r^1 - r^2\|_\infty\} < \delta_3$. 
	
Therefore, we take
$\delta = \min\{\frac{a}{2},\delta_1,\delta_2,\delta_3\} > 0$ and assume
$\|(\alpha^1,s^1,\psi^1)\|_X$,\\ $\|(\alpha^2,s^2,\psi^2)\|_X$
$< \delta$. Then this gives
	\begin{equation}
		\begin{split}
			&|[G^1(\alpha^1, r^1,\theta^1) - G^1(\alpha^2, r^2,\theta^2) - M_{21}(\alpha^1 - \alpha^2) - M_{22}(s^1 - s^2)]_{i,j}| \\
			&\leq |\sum_{i',j'} [\alpha^1r^1_{i',j'}\cos(\theta^1_{i',j'} - \theta^1_{i,j}) - \alpha^2r^2_{i',j'}\cos(\theta^1_{i',j'} - \theta^1_{i,j}) - (\alpha^1 - \alpha^2)a\cos(\bar{\theta}_{i',j'} - \bar{\theta}_{i,j})]| \\
			&\ \ \ \ \ + |\sum_{i',j'} [\alpha^1r^1_{i,j} - \alpha^2r^2_{i,j} - (\alpha^1 - \alpha^2)a]| + |r^1_{i,j}\lambda(r^1_{i,j}) - r^2_{i,j}\lambda(r^2_{i,j}) - a\lambda'(a)(s^1_{i,j} - s^2_{i,j})| \\
			&< 2\varepsilon\sum_{i',j'}\|(\alpha^1,s^1,\psi^1) - (\alpha^2,s^2,\psi^2)\|_X \\ 
			&\ \ \ \ \ + \varepsilon\sum_{i',j'} \|(\alpha^1,s^1,\psi^1) - (\alpha^2,s^2,\psi^2)\|_X + \varepsilon \|(\alpha^1,s^1,\psi^1) - (\alpha^2,s^2,\psi^2)\|_X \\
			&\leq 8\varepsilon\|(\alpha^1,s^1,\psi^1) - (\alpha^2,s^2,\psi^2)\|_X + 4\varepsilon\|(\alpha^1,s^1,\psi^1) - (\alpha^2,s^2,\psi^2)\|_X \\ 
			&\ \ \ \ \ + \varepsilon\|(\alpha^1,s^1,\psi^1) - (\alpha^2,s^2,\psi^2)\|_X \\
			&= 13\varepsilon\|(\alpha^1,s^1,\psi^1) - (\alpha^2,s^2,\psi^2)\|_X,  
		\end{split}
	\end{equation}  
	where we have used the fact that each element has at most four nearest neighbours. Therefore taking the supremum over all $(i,j)\in\Lambda$ shows that 
	\begin{equation}
		\frac{\|G^1(\alpha^1, r^1,\theta^1) - G^1(\alpha^2, r^2,\theta^2) - M_{21}(\alpha^1 - \alpha^2) - M_{22}(s^1 - s^2)\|_\infty}{\|(\alpha^1,s^1,\psi^1) - (\alpha^2,s^2,\psi^2)\|_X} < 13\varepsilon.
	\end{equation}
	Since $\varepsilon > 0$ was arbitrary, this quotient can be made arbitrarily small, showing that $G^1$ is strongly Fr\'echet differentiable at $(\alpha,s,\psi) = (0,0,0)$. 
	
	 Now to show that this Fr\'echet derivative is bounded, first we see that for any $\alpha \in \mathbb{R}$ and $(i,j)\in\Lambda$ we have
	 \begin{equation}
	 	|[M_{21}\alpha]_{i,j}| = |\alpha|\cdot|a\sum_{i',j'} [\cos(\bar{\theta}_{i',j'} - \bar{\theta}_{i,j}) - 1]| \leq 8a|\alpha|, 	
	 \end{equation} 
	 where we have again used the fact that each element has at most four nearest-neighbours. Similarly, for $M_{22}$ we have that
	 \begin{equation}
	 	|[M_{22}s]_{i,j}| = a|\lambda'(a)\|s_{i,j}| \leq a|\lambda'(a)|\cdot \|s\|_\infty
	 \end{equation}
	 for any $s \in \ell^\infty(\Lambda)$ and $(i,j)\in\Lambda$. Putting this all together gives that for any $x = (\alpha,s,\psi) \in X$ we have
	 \begin{equation}
	 	|[[M_{21}\alpha + M_{22}s]_{i,j}]| \leq 8a|\alpha| + a|\lambda'(a)|\cdot\|s\|_\infty \leq (8a + a|\lambda'(a)|)\|x\|_X. 
	 \end{equation}
	 Taking the supremum over $(i,j)\in\Lambda$ gives
	 \begin{equation}
	 	\|M_{21}\alpha + M_{22}s\|_X \leq a(8 + |\lambda'(a)|)\|x\|_X, 	
	 \end{equation}
	 showing that the Fr\'echet derivative is bounded. 
\end{proof} 

\begin{lem} \label{lem:G2Derivative} 
	$G^2(\alpha,s,\psi) : \mathbb{R} \times B_{\frac{a}{2}}(0) \times c_0(\Lambda) \to c_0(\Lambda)$ is strongly Fr\'echet differentiable at $(\alpha,s,\psi) = (0,0,0)$. The Fr\'echet derivative at this point is the bounded linear operator which acts by $(\alpha,s,\psi) \mapsto M_{32}\alpha + M_{33}s$, where $M_{32}$ and $M_{33}$ are as defined in $(\ref{M32})$ and $(\ref{M33})$, respectively.
\end{lem}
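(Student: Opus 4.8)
The plan is to follow the template of the proof of Lemma~\ref{lem:G1Derivative}, the only genuinely new ingredients being the division by $r_{i,j}=a+s_{i,j}$ occurring in $G^2$ and the overall decay factor $i^{-1}$. As there, I first pass to the variables $r_{i,j}=a+s_{i,j}$ and $\theta_{i,j}=\bar{\theta}_{i,j}+\psi_{i,j}$, so that strong Fr\'echet differentiability of $G^2(\alpha,s,\psi)$ at $(0,0,0)$ is equivalent to that of $G^2(\alpha,r,\theta)$ at $(0,{\bf a},\bar{\theta})$. Since $c_0(\Lambda)$ is a closed subspace of $\ell^\infty(\Lambda)$, it suffices to produce a bounded linear operator $L:X\to\ell^\infty(\Lambda)$ whose range lies in $c_0(\Lambda)$ and against which the strong-differentiability quotient of $G^2$ tends to $0$; the candidate $L$ is the bounded linear operator recorded in the statement, i.e.\ the third block-row of the matrix $M$ of Proposition~\ref{prop:FrechetDerivative}, sending $(\alpha,s,\psi)$ to $M_{32}s+M_{33}\psi$. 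Note that the only occurrence of $\alpha$ in $G^2$ is through $\omega_1(a+s_{i,j},\alpha)$, and $\partial_2\omega_1(a,0)=0$ because Hypothesis~\ref{Hyp:LambdaOmega}(2) forces $\omega_1(a,\cdot)\equiv 0$; this is why $L$ carries no genuine $\alpha$-dependence.

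Fix $\varepsilon>0$. The heart of the argument is a per-index estimate extracted from Theorem~\ref{thm:StrongDiff}. Applying that theorem to the function $f(x_1,x_2,x_3,x_4)=\frac{x_1}{x_2}\sin(x_3-x_4)$ at the base point $(a,a,\bar{\theta}_{i',j'},\bar{\theta}_{i,j})$, near which $f$ is $C^\infty$ since the second slot remains close to $a>0$, furnishes a radius $\delta_1>0$ that bounds the increment of $\frac{r_{i',j'}}{r_{i,j}}\sin(\theta_{i',j'}-\theta_{i,j})$ against its linearization by $\varepsilon$ times the relevant norm difference, for every nearest-neighbour pair in $\Lambda$; applying it to the $C^1$ function $(R,\alpha)\mapsto\omega_1(a+R,\alpha)$ at $(0,0)$ furnishes a radius $\delta_2>0$ doing the same for the $\omega_1$ term. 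The decisive point is that a single choice $\delta:=\min\{a/2,\delta_1,\delta_2\}$ works uniformly for all $(i,j)\in\Lambda$, because the only index-dependent data entering these functions are the fixed phases $\bar{\theta}_{i',j'}$ and $\bar{\theta}_{i,j}$, which appear only as arguments of bounded trigonometric functions, while the denominators $a+s_{i,j}$ stay at least $a/2$ on the domain of $G$. Then, for two inputs of $X$-norm below $\delta$, the $(i,j)$-component of $G^2$ minus its linearization $L$, with the common factor $i^{-1}$ removed, is no larger than a fixed multiple --- coming from the at-most-four-neighbour count --- of $\varepsilon$ times the $X$-norm distance between the two inputs; reinstating $i^{-1}\le 1$ only improves the bound, so taking the supremum over $(i,j)\in\Lambda$ and dividing shows the quotient is $O(\varepsilon)$, hence tends to $0$.

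Two routine points remain. First, $L$ genuinely lands in $c_0(\Lambda)$ and not merely in $\ell^\infty(\Lambda)$: every component of $L(\alpha,s,\psi)$ carries the factor $i^{-1}$ and is otherwise bounded by a constant (from $|\sin|\le 1$, $|\cos|\le 1$, $|\partial_1\omega_1(a,0)|<\infty$, and at most four neighbours), so for each $\varepsilon>0$ only the finitely many indices with small $i$ can have $|[L(\alpha,s,\psi)]_{i,j}|\ge\varepsilon$ --- this is verbatim the decay argument already used to prove that $G$ is well-defined. Second, $L$ is bounded, by the same constant estimate; and differentiating the two elementary functions above at their base points and assembling the first-order terms identifies the linearization with $M_{32}s+M_{33}\psi$, where one uses the identity $\sum_{i',j'}\sin(\bar{\theta}_{i',j'}-\bar{\theta}_{i,j})=0$ for $(i,j)\in\Lambda$ --- which is precisely $F^2_{i,j}(0,{\bf a},\bar{\theta})=0$ from Section~\ref{sec:Existence} --- to simplify the coefficient of $s_{i,j}$. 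Combined with Lemma~\ref{lem:G1Derivative} and the trivial differentiability of the first component, this establishes Proposition~\ref{prop:FrechetDerivative}.

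The main obstacle, as already in Lemma~\ref{lem:G1Derivative}, is exactly this uniformity over the infinitely many indices of $\Lambda$: one must be certain the radius $\delta$ produced by Theorem~\ref{thm:StrongDiff} does not shrink to $0$ as $i\to\infty$, and it does not, because the index enters the relevant functions only through bounded trigonometric data and a denominator bounded away from zero. Everything past that point is bookkeeping: counting the at most four neighbours, using $i^{-1}\le 1$ in the increment estimates, and using the $i^{-1}$ decay to place the range of $L$ inside $c_0(\Lambda)$.
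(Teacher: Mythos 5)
Your proposal is correct and follows essentially the same route as the paper: the same change of variables, the same two applications of Theorem~\ref{thm:StrongDiff} (to the ratio-times-sine function at $(a,a,\bar{\theta}_{i',j'},\bar{\theta}_{i,j})$ and to $\omega_1(a+R,\alpha)$ at $(0,0)$, using $\omega_1(a,\cdot)\equiv 0$), the same use of $\sum_{i',j'}\sin(\bar{\theta}_{i',j'}-\bar{\theta}_{i,j})=0$ to cancel the $s_{i,j}$-coefficient, and the same four-neighbour count with $i^{-1}\le 1$ and the $i^{-1}$ decay for boundedness and for mapping into $c_0(\Lambda)$. Your explicit justification that the radius $\delta$ can be chosen uniformly in $(i,j)$ (bounded trigonometric data, denominator bounded below by $a/2$) is a point the paper leaves implicit, and it is a correct and welcome addition.
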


\begin{proof}
	As in the proof of Lemma $\ref{lem:G1Derivative}$, we will use the variables $r_{i,j} = a + s_{i,j}$ and $\theta_{i,j} = \bar{\theta}_{i,j} + \psi_{i,j}$ for all $(i,j) \in \Lambda$. We will write $r = \{r_{i,j}\}_{(i,j)\in\Lambda}$ and $\theta = \{\theta_{i,j}\}_{(i,j)\in\Lambda}$ and note that $G^2(\alpha,s,\psi)$ strongly Fr\'echet differentiable at $(\alpha, s,\psi) = (0,0,0)$ is equivalent to $G^2(\alpha,r,\theta)$ strongly Fr\'echet differentiable at $(\alpha,r,\theta) = (0, {\bf a}, \bar{\theta})$.  

Begin by taking any $\varepsilon > 0$ and consider $(\alpha^1,s^1,\psi^1),(\alpha^2,s^2,\psi^2) \in \mathbb{R} \times B_{\frac{a}{2}}(0) \times c_0(\Lambda)$. Here $r^1$ and $r^2$ correspond to $s^1$ and $s^2$, respectively. Similarly $\theta^1$ and $\theta^2$ correspond to $\psi^1$ and $\psi ^2$. 

Using the fact that sine is infinitely differentiable on $\mathbb{R}$ and that $a > 0$ to apply Theorem $\ref{thm:StrongDiff}$ and find that there exists a $\delta_1 > 0$ such that
\begin{equation} \label{SineDiff1}
	\begin{split}
		&\bigg|\frac{r^1_{i',j'}}{r^1_{i,j}}\sin(\theta^1_{i',j'} - \theta^1_{i,j}) - \frac{r^2_{i',j'}}{r^2_{i,j}}\sin(\theta^2_{i',j'} - \theta^2_{i,j}) -\frac{1}{a}(s^1_{i',j'} - s^2_{i',j'})\sin(\bar{\theta}_{i',j'} - \bar{\theta}_{i,j}) \\
		& \ \ \ \ \ -\cos(\bar{\theta}_{i',j'} - \bar{\theta}_{i,j})[(\psi^1_{i',j'} - \psi^1_{i,j}) - (\psi^2_{i',j'} - \psi^2_{i,j})] + \frac{1}{a}(s^1_{i,j} - s^2_{i,j})\sin(\bar{\theta}_{i',j'} - \bar{\theta}_{i,j}) \bigg| \\
		&< \varepsilon \max\{|s^1_{i',j'} - s^2_{i',j'}|,|s^1_{i,j} - s^2_{i,j}|,|(\psi^1_{i',j'} - \psi^1_{i,j}) - (\psi^2_{i',j'} - \psi^2_{i,j})|\} \\
		&\leq \varepsilon \max\{\|s^1 - s^2\|_\infty, |\psi^1_{i',j'} - \psi^2_{i',j'}| + |\psi^1_{i,j} - \psi^2_{i,j}|\} \\
		&\leq 2\varepsilon \max\{\|s^1 - s^2\|_\infty, \|\psi^1 - \psi^2\|_\infty\} \\
		&\leq 2\varepsilon \|(\alpha^1,s^1,\psi^1) - (\alpha^2,s^2,\psi^2)\|_X, 
	\end{split}
\end{equation}	
provided $|s^1_{i',j'}|$, $|s^2_{i',j'}|$, $|s^1_{i'j}|$, $|s^2_{i,j}|$, $|\psi^1_{i',j'}|$, $|\psi^2_{i',j'}|$, $|\psi^1_{i,j}|$, $|\psi^2_{i,j}|$ $< \delta_1$. Therefore, $(\ref{SineDiff1})$ holds for every $(i,j)\in\Lambda$ whenever $\|s^1\|_\infty$, $\|s^2\|_\infty$, $\|\psi^1\|_\infty$, $\|\psi^2\|_\infty$ $< \min\{\frac{a}{2},\delta_1\}$.

Now, recalling that 
\begin{equation}
	\sum_{i',j'} \sin(\bar{\theta}_{i',j'} - \bar{\theta}_{i,j}) = 0
\end{equation}
for every $(i,j)\in\Lambda$, we trivially have that 
\begin{equation} \label{SineZero}
	\frac{1}{a}(s^1_{i,j} - s^2_{i,j})\sum_{i',j'} \sin(\bar{\theta}_{i',j'} - \bar{\theta}_{i,j}) = 0	
\end{equation}
for every $(i,j)\in\Lambda$. Therefore, combining this result with $(\ref{SineDiff1})$ above gives that whenever $\|s^1\|_\infty$, $\|s^2\|_\infty$, $\|\psi^1\|_\infty$, $\|\psi^2\|_\infty$ $< \min\{\frac{a}{2},\delta_1\}$ we have
\begin{equation} \label{SineDiff2}
	\begin{split}
		&\bigg|\sum_{i',j'} \frac{r^1_{i',j'}}{r^1_{i,j}}\sin(\theta^1_{i',j'} - \theta^1_{i,j}) - \sum_{i',j'} \frac{r^2_{i',j'}}{r^2_{i,j}}\sin(\theta^2_{i',j'} - \theta^2_{i,j}) -\frac{1}{a}\sum_{i',j'} (s^1_{i',j'} - s^2_{i',j'})\sin(\bar{\theta}_{i',j'} - \bar{\theta}_{i,j}) \\
		& \ \ \ \ \  -\sum_{i',j'} \cos(\bar{\theta}_{i',j'} - \bar{\theta}_{i,j})[(\psi^1_{i',j'} - \psi^1_{i,j}) - (\psi^2_{i',j'} - \psi^2_{i,j})] \bigg| \\
		&\stackrel{(\ref{SineZero})}{\leq} \sum_{i',j'}|\frac{r^1_{i',j'}}{r^1_{i,j}}\sin(\theta^1_{i',j'} - \theta^1_{i,j}) - \frac{r^2_{i',j'}}{r^2_{i,j}}\sin(\theta^2_{i',j'} - \theta^2_{i,j}) -\frac{1}{a}(s^1_{i',j'} - s^2_{i',j'})\sin(\bar{\theta}_{i',j'} - \bar{\theta}_{i,j}) \\
		& \ \ \ \ \ -\cos(\bar{\theta}_{i',j'} - \bar{\theta}_{i,j})[(\psi^1_{i',j'} - \psi^1_{i,j}) - (\psi^2_{i',j'} - \psi^2_{i,j})] + \frac{1}{a}(s^1_{i,j} - s^2_{i,j})\sin(\bar{\theta}_{i',j'} - \bar{\theta}_{i,j}) \bigg| \\
		&\stackrel{(\ref{SineDiff1})}{<} 2\varepsilon \sum_{i',j'}  \|(\alpha^1,s^1,\psi^1) - (\alpha^2,s^2,\psi^2)\|_X \\
		&\leq 8\varepsilon  \|(\alpha^1,s^1,\psi^1) - (\alpha^2,s^2,\psi^2)\|_X,   
	\end{split}
\end{equation}	
where we have used the fact that each element has at most four nearest-neighbours. 

In a similar fashion, we recall that from condition $(2)$ of Hypothesis $\ref{Hyp:LambdaOmega}$ we have that $\omega_1$ is continuously differentiable in both its arguments on $[0,\infty)\times\mathbb{R}$. Let us denote 
\begin{equation}
	K:= \partial_1 \omega_1(a + \rho,\alpha)\bigg|_{(\rho,\alpha) = (0,0)},
\end{equation}
where $\partial_k$ denotes the partial derivative with respect to the $k$th argument. One should also note that since $\omega_1(a,\alpha) = 0$ for all $\alpha > 0$, we have that
\begin{equation}
	\partial_2 \omega_1(a + \rho,\alpha)\bigg|_{(\rho,\alpha) = (0,0)} = 0.	
\end{equation} 
Now since $a>0$, Theorem $\ref{thm:StrongDiff}$ gives that there exists a $\delta_2 > 0$ such that 
\begin{equation} \label{omegaDiff}
	\begin{split}
		|\omega_1(r^1_{i,j},\alpha^1) - \omega_1(r^2_{i,j},\alpha) - K(s^1_{i,j} - s^2_{i,j})| &< \varepsilon\max\{|\alpha^1 - \alpha^2|,|r^1_{i,j} - r^2_{i,j}|\} \\
		&\leq \varepsilon \max\{|\alpha^1 - \alpha^2|,\|r^1 - r^2\|_\infty\} \\
		&\leq \varepsilon \|(\alpha^1,s^1,\psi^1) - (\alpha^2,s^2,\psi^2)\|_X,
	\end{split}
\end{equation}
for any $|\alpha^1|$, $|\alpha^2|$, $|s^1_{i,j}|$, $|s^2_{i,j}|$ $<\delta_2$. Therefore $(\ref{omegaDiff})$ holds for all $(i,j)\in\Lambda$ provided $|\alpha^1|$, $|\alpha^2|$, $\|s^1\|_\infty$, $\|s^2\|_\infty$ $< \min\{\frac{a}{2},\delta_2\}$.

Let $\delta = \min\{\frac{a}{2},\delta_1,\delta_2\} > 0$. Then for all $\|(\alpha^1,s^1,\psi^1)\|_X$, $\|(\alpha^2,s^2,\psi^2)\|_X$ $< \delta$ we get that
\begin{equation}
	\begin{split}
		&|[G^2(\alpha^1, r^1,\theta^1) - G^2(\alpha^2, r^2,\theta^2) - M_{32}(s^1 - s^2) - M_{22}(\psi^1 - \psi^2)]_{i,j}| \\
		&\leq i^{-1}\bigg|\sum_{i',j'} \frac{r^1_{i',j'}}{r^1_{i,j}}\sin(\theta^1_{i',j'} - \theta^1_{i,j}) - \sum_{i',j'} \frac{r^2_{i',j'}}{r^2_{i,j}}\sin(\theta^2_{i',j'} - \theta^2_{i,j}) \\
		&\ \ \ \ \ -\frac{1}{a}\sum_{i',j'} (s^1_{i',j'} - s^2_{i',j'})\sin(\bar{\theta}_{i',j'}- \bar{\theta}_{i,j}) \\
		& \ \ \ \ \   -\sum_{i',j'} \cos(\bar{\theta}_{i',j'} - \bar{\theta}_{i,j})[(\psi^1_{i',j'} - \psi^1_{i,j}) - (\psi^2_{i',j'} - \psi^2_{i,j})] \bigg|  \\
		& \ \ \ \ \ + i^{-1}|\omega_1(r^1_{i,j},\alpha^1) - \omega_1(r^2_{i,j},\alpha) - K(s^1_{i,j} - s^2_{i,j})| \\
		&\stackrel{(\ref{SineDiff2}),(\ref{omegaDiff})}{<} 8i^{-1}\varepsilon \|(\alpha^1,s^1,\psi^1) - (\alpha^2,s^2,\psi^2)\|_X + i^{-1}\varepsilon \|(\alpha^1,s^1,\psi^1) - (\alpha^2,s^2,\psi^2)\|_X \\
		& = 9\|(\alpha^1,s^1,\psi^1) - (\alpha^2,s^2,\psi^2)\|_X,
	\end{split}
\end{equation}
since $i \geq 1$. Therefore, taking the supremum over $(i,j)\in\Lambda$ and dividing by $\|(\alpha^1,s^1,\psi^1) - (\alpha^2,s^2,\psi^2)\|_X$ gives the quotient
\begin{equation}
	\frac{\|G^2(\alpha^1, r^1,\theta^1) - G^2(\alpha^2, r^2,\theta^2) - M_{32}(s^1 - s^2) - M_{22}(\psi^1 - \psi^2)\|_\infty}{\|(\alpha^1,s^1,\psi^1) - (\alpha^2,s^2,\psi^2)\|_X} < 9\varepsilon.	
\end{equation}
Since $\varepsilon > 0$ was arbitrary, this quotient can be made as small as necessary, thus showing that $G^2$ is strongly Fr\'echet differentiable at $(\alpha,s,\psi) = (0,0,0)$.

To see that this Fr\'echet derivative is bounded, we first observe that for any $s\in c_0(\Lambda)$ and $(i,j)\in\Lambda$ we have
\begin{equation}
	|[M_{32}s]_{i,j}| \leq \frac{1}{a}i^{-1}\sum_{i',j'} |s_{i',j'}\|\sin(\bar{\theta}_{i',j'}- \bar{\theta}_{i,j})| \leq \frac{4}{a} \|s\|_\infty,
\end{equation}
where we have used the fact that each element has at most four nearest-neighbours. Similarly, for any $\psi\in c_0(\Lambda)$ and $(i,j)\in\Lambda$ we get
\begin{equation}
	|[M_{33}\psi]_{i,j}| \leq i^{-1}\sum_{i',j'} |\cos(\bar{\theta}_{i',j'}- \bar{\theta}_{i,j})\|\psi_{i',j'} - \psi_{i,j}| \leq 2\sum_{i',j'} \|\psi\|_\infty \leq 8\|\psi\|_\infty.
\end{equation}	 
Therefore, for $(\alpha,s,\psi) \in X$ we have
\begin{equation}
	|[M_{32}s + M_{33}\psi]_{i,j}| \leq \frac{4}{a} \|s\|_\infty + 8\|\psi\|_\infty \leq \bigg(\frac{4}{a} + 8\bigg)\|x\|_X.  	
\end{equation}
Taking the supremum over $(i,j)\in\Lambda$ gives that this Fr\'echet derivative is bounded. 
\end{proof}

We can now prove Proposition $\ref{prop:FrechetDerivative}$.

\begin{proof}[Proof of Proposition \ref{prop:FrechetDerivative}]
	Let us fix $\varepsilon > 0$. Then to begin let us note that for any $(\alpha^1,s^1,\psi^1),(\alpha^2,s^2,\psi^2)\in X$ and $M$ as defined in $(\ref{DerivativeMatrix})$ we have
	\begin{equation}
		\begin{split}
			&\|G(\alpha^1,s^1,\psi^1) - G(\alpha^2,s^2,\psi^2) - M[(\alpha^1,s^1,\psi^1)-(\alpha^2,s^2,\psi^2)]\|_X \\
			&= \max\{0, \|G^1(\alpha^1,s^1,\psi^1) - G^1(\alpha^2,s^2,\psi^2) - M_{21}(\alpha^1 - \alpha^2) - M_{22}(s^1 - s^2)\|_\infty, \\
			& \ \ \ \ \ \|G^2(\alpha^1,s^1,\psi^1) - G^2(\alpha^2,s^2,\psi^2) - M_{32}(s^1 - s^2) - M_{22}(\psi^1 - \psi^2)\|_\infty\}.
		\end{split}
	\end{equation}
	
	Now from Lemma $\ref{lem:G1Derivative}$, we have that there exists $\delta_1 > 0$ such that
	\begin{equation}
		\begin{split}
		\|G^1(\alpha^1,s^1,\psi^1) - &G^1(\alpha^2,s^2,\psi^2) - M_{21}(\alpha^1 - \alpha^2) - M_{22}(s^1 - s^2)\|_\infty \\ 
		& \ \ \ \ \ < \varepsilon \|(\alpha^1,s^1,\psi^1) - (\alpha^2,s^2,\psi^2)\|_X	
		\end{split}
	\end{equation}
	for any elements of $X$ satisfying $\|(\alpha^1,s^1,\psi^1)\|, \|(\alpha^2,s^2,\psi^2)\| < \min\{\frac{a}{2},\delta_1\}$. Similarly, from Lemma $\ref{lem:G2Derivative}$ we have that there exists a $\delta_2 > 0$ such that
	\begin{equation}
		\begin{split}
		\|G^2(\alpha^1,s^1,\psi^1) - &G^2(\alpha^2,s^2,\psi^2) - M_{32}(s^1 - s^2) - M_{33}(\psi^1 - \psi^2)\|_\infty \\ 
		& \ \ \ \ \ < \varepsilon \|(\alpha^1,s^1,\psi^1) - (\alpha^2,s^2,\psi^2)\|_X	
		\end{split}
	\end{equation}
	for any elements of $X$ satisfying $\|(\alpha^1,s^1,\psi^1)\|, \|(\alpha^2,s^2,\psi^2)\| < \min\{\frac{a}{2}, \delta_2\}$. Therefore, any pair of elements of $X$ satisfying $\|(\alpha^1,s^1,\psi^1)\|, \|(\alpha^2,s^2,\psi^2)\| < \min\{\frac{a}{2}, \delta_1,\delta_2\}$ we get  
	\begin{equation}
		\begin{split}
			&\|G(\alpha^1,s^1,\psi^1) - G(\alpha^2,s^2,\psi^2) - M[(\alpha^1,s^1,\psi^1)-(\alpha^2,s^2,\psi^2)]\|_X \\
			&< \varepsilon \max\{0,  \|(\alpha^1,s^1,\psi^1) - (\alpha^2,s^2,\psi^2)\|_X\} \\
			&\leq \varepsilon  \|(\alpha^1,s^1,\psi^1) - (\alpha^2,s^2,\psi^2)\|_X.
		\end{split}
	\end{equation}
	Since $\varepsilon > 0$ was arbitrary, we have shown that $G$ is strongly Fr\'echet differentiable at $(\alpha,s,\psi) = (0,0,0)$. Boundedness of $M:X\to X$ is a trivial consequence of Lemmas $\ref{lem:G1Derivative}$ and $\ref{lem:G2Derivative}$, thus completing the proof.  
\end{proof}


\end{document}